\newtheorem{theorem}{Theorem}[section]
\newtheorem{remark}{Remark}[section]
\newtheorem{assump}{Assumption}[section]
\newtheorem{prop}{Proposition}[section]
\newtheorem{lemma}{Lemma}[section]
\newtheorem{ex}{Example}[section]
\newtheorem{al}{Algorithm}[section]
\DeclareMathOperator*{\esssup}{ess\,sup}
\journal{Journal of Computational and Applied Mathematics}
\begin{document}

\begin{frontmatter}

%% Title, authors and addresses

%% use the tnoteref command within \title for footnotes;
%% use the tnotetext command for theassociated footnote;
%% use the fnref command within \author or \address for footnotes;
%% use the fntext command for theassociated footnote;
%% use the corref command within \author for corresponding author footnotes;
%% use the cortext command for theassociated footnote;
%% use the ead command for the email address,
%% and the form \ead[url] for the home page:
%% \title{Title\tnoteref{label1}}
%% \tnotetext[label1]{}
% \author{Name\corref{cor1}\fnref{label2}}
% \ead{email address}
% \ead[url]{home page}
% \fntext[label2]{}
% \cortext[cor1]{}
% \address{Address\fnref{label3}}
% \fntext[label3]{}
%{doi: 10.1016/j.cam.2020.112880}
\title{An Energy Stable Linear Diffusive 
	Crank-Nicolson Scheme for the Cahn-Hilliard Gradient Flow}

\author[label2]{Lin Wang}%\corref{cor1}}%\fnref{label2}}
\ead{wanglin@csrc.ac.cn}
\fntext[label2]{School of Applied Mathematics, Guangdong University of Technology, Guangzhou, Guangdong, 510006, China}
\cortext[cor1]{corresponding author}
 	
\author[label3]{Haijun Yu}%\fnref{label3}}
\ead{hyu@lsec.cc.ac.cn}
\fntext[label3]{LSEC \& NCMIS, Institute of
	 	Computational Mathematics and Scientific/Engineering
	 	Computing, Academy of Mathematics and Systems Science,
	 	Beijing 100190, China; School of Mathematical Sciences,
	 	University of Chinese Academy of Sciences, Beijing
	 	100049, China} 	

%%% use optional labels to link authors explicitly to addresses:
%%% \author[label1,label2]{}
%%% \address[label1]{}
%%% \address[label2]{}
%
%\author[label1]{Lin Wang}
% \address[label1]{CSRC, Beijing Computational
% 	Science Research Center, Beijing
% 	100193, China (wanglin@csrc.ac.cn).}
%\author[label2]{ Haijun Yu} 
% \address[label2]{LSEC \& NCMIS, Institute of
% 	Computational Mathematics and Scientific/Engineering
% 	Computing, Academy of Mathematics and Systems Science,
% 	Beijing 100190, China; School of Mathematical Sciences,
% 	University of Chinese Academy of Sciences, Beijing
% 	100049, China (hyu@lsec.cc.ac.cn).}

\begin{abstract}
We propose and analyze a linearly stabilized semi-implicit diffusive Crank--Nicolson scheme for the Cahn--Hilliard gradient flow. In this scheme, the nonlinear bulk force is treated explicitly with two second-order stabilization terms. This treatment leads to linear elliptic system with constant coefficients and provable discrete energy dissipation. Rigorous error analysis is carried out for the fully discrete scheme. When the time step-size and the space step-size are small enough, second order accuracy in time is obtained with a prefactor controlled by some lower degree polynomial of $1/\varepsilon$. {Here $\varepsilon$ is the thickness of the interface}. Numerical results together with an adaptive time stepping are presented to verify the accuracy and efficiency of the proposed scheme.
\end{abstract}

\begin{keyword}
Cahn-Hilliard gradient flow \sep
unconditionally stable\sep
stabilized semi-implicit scheme \sep
diffusive Crank-Nicolson scheme \sep
error analysis \sep
adaptive time stepping
\end{keyword} 

\end{frontmatter}

%% \linenumbers

%% main text
\section{Introduction}
\label{1}

The Cahn-Hilliard equation is a widely used phase-field model. 
It was originally introduced by Cahn
and Hilliard~\cite{cahn_free_1958} to describe the complicated 
phase separation and coarsening phenomena in non-uniform systems
such as alloys, glasses and polymer mixtures. 
An important feature of the phase field model is that it
can be viewed as the gradient flow of the Liapunov energy functional
\begin{equation}\label{eq:CH:E}
E_{\varepsilon}(\phi):=\int_{\Omega}\Big(\frac{\varepsilon}{2}|\nabla \phi|^{2} 
+\frac{1}{\varepsilon} F(\phi)\Big)dx.
\end{equation}
We consider the Liapunov energy functional $E_{\varepsilon}(\phi)$
in \eqref{eq:CH:E} and the corresponding gradient flow in $H^{-1}$
to get the Cahn-Hilliard equation 
\begin{equation}\label{eq:CH}
\begin{cases}
\phi_{t}=\gamma\Delta \mu,   & (x,t)\in \Omega \times (0,T],\\
\mu=-\varepsilon \Delta \phi + \dfrac{1}{\varepsilon}f(\phi),  
&(x,t)\in \Omega \times (0,T],\\
\end{cases}
\end{equation}
subject to initial value
\begin{equation}\label{eq:CH:ini}
\phi |_{t=0} =\phi_{0}(x), x \in \Omega,
\end{equation}
and Neumann boundary condition
\begin{equation}\label{eq:CH:nbc}
\partial_{n}\phi =0, \quad \partial_{n}\mu =0, \quad
x\in \partial\Omega.
\end{equation} 
In the above, $\Omega \in R^{d}, d=1, 2,3$ is a bounded 
domain with a locally Lipschitz boundary (for the $d=2,3$ case), $n$ is the 
outward normal of $\partial \Omega$, $T$ is a given 
time, $\phi(x,t)$ is the phase-field variable.
$f(\phi)=F'(\phi)$ with $F(\phi)$ being a given
energy potential with two local minima. 
In this paper, we take the double well
potential $F(\phi)=\frac{1}{4}(\phi^2-1)^2$.
$\varepsilon$ is the thickness of the interface
between two phases. 
$\gamma$ is the mobility, which is related to 
the characteristic relaxation time of the system.
On other hand, taking the inner product of
the first equation in \eqref{eq:CH} with $\mu$
and the second equation in \eqref{eq:CH} with 
$\frac{\partial \phi}{\partial t}$,
we obtain immediately the energy dissipation law:
\begin{equation}\label{eq:CH:Edis}
\frac{\partial}{\partial t}E_{\varepsilon}(\phi) 
= - \gamma\|\nabla\mu\|^{2}
= - \gamma\|\phi_t\|_{-1}^{2},
\end{equation}
where $\|\cdot\|$ is the $L^2$ norm,
$\|\cdot\|_{-1}$ is the $H^{-1}$ norm defined in Section 2.

The Cahn-Hilliard equation is frequently used in mathematical 
models for problems in many fields of science and engineering,
particularly in materials science and fluid dynamics 
(cf. e.g. \cite{ cahn_free_1958,shen_efficient_2015, 
	anderson_diffuse-interface_2003, yue_diffuse-interface_2004, Brenner_Mathematical_2010,  
	elliott_cahnhilliard_1996, wu_stabilized_2014}).
For this reason, Cahn-Hilliard equation
has been the subject of many theoretical and
numerical investigations for several decades, see,
for instance, \cite{du_numerical_1991, elliott_error_1992, 
	chen_spectrum_1994, caffarelli_l_1995, elliott_cahnhilliard_1996,
	eyre_unconditionally_1998, furihata_stable_2001, liu_phase_2003, 
	feng_error_2004, shen_numerical_2010, condette_spectral_2011} 
and the references therein.
To obtain an energy dissipative scheme, 
the linear term is usually treated implicitly in some manners, 
while different approaches are used for nonlinear terms $F(\phi)$.
A very popular approach is the convex splitting method
which was first introduced in \cite{elliott_global_1993}, 
and popularized by \cite{eyre_unconditionally_1998}, 
in which, the convex part of $F(\phi)$ is treated implicitly
and the concave part of $F(\phi)$ is treated explicitly.
The convex splitting method was used widely,
and several second order extensions were proposed
based on either the Crank-Nicolson scheme (see e.g.\cite{
baskaran_energy_2013, wu_stabilized_2014,guo_h2_2016, diegel_stability_2016, cheng_second-order_2016, li_second-order_2017}), 
or second order backward differentiation formula (BDF2) \cite{yan_second-order_2018, li_second_2018}.

The stabilization method is another efficient algorithm
to improve the numerical stability, 
which is a special class of convex splitting method, 
see \cite{he_large_2007, shen_numerical_2010}.
The main idea is to introduce an artificial stabilization term 
to balance the explicit treatment of the nonlinear term, 
which avoids strict time step constraint.
This idea was followed up in \cite{feng_stabilized_2013} 
for the stabilized Crank-Nicolson schemes for phase field equations.
Those time marching schemes all lead to linear systems. On the other
hand, one needs to introduce a proper stabilization term and a suitably
truncated nonlinear function $\tilde{f}(\phi)$ instead of
$f(\phi)$ to prove the unconditionally energy stable
property. It is worth to mention that with no truncation made 
to $f(\phi)$, Li et al \cite{li_characterizing_2016,li_second_2017}
proved that the energy stable property can be obtained as
well, but a much larger stability constant needs be used.
The main advantage of the stabilized scheme is its 
simplicity and efficiency.

An interesting approach, named invariant energy quadratization (IEQ), is proposed in \cite{yang_linear_2016} for
dealing with phase-field equations with nonlinear Flory–
Huggins potential. The  IEQ method is a generalization of the method of Lagrange multipliers proposed in 
\cite{guillen-gonzalez_linear_2013,guillen-gonzalez_second_2014}.
It was extended to a lot of other applications, see e.g. \cite{han_numerical_2017,
	yang_efficient_2017,yang_yu_efficient_2017}. 
Recently, a scalar auxiliary variable (SAV) approach was introduced by Shen et al.\cite{shen_scalar_2017,shen_new_2017}.
SAV approach inherits all advantages of IEQ approach but also 
overcomes the shortcomings of solving variable-coefficient 
systems at each time step.

In this paper, we focus on the proof of the stability and convergence properties of energy stable linear diffusive Crank-Nicolson (SLD-CN) 
scheme for the Cahn-Hilliard Equation.
Recently, we proposed two second-order unconditionally stable linear 
schemes based on Crank-Nicolson method (SL-CN) and second-order backward differentiation formula (SL-BDF2) with stabilization for the Cahn-Hilliard equation and the Allen-Chan equation \cite{wang_two_2018, WangYu2018b, wang_energy_2018}.
 In both schemes, the nonlinear bulk forces are
treated explicitly with two additional linear stabilization terms: 
$A\tau \Delta(\phi^{n+1}-\phi^n)$ and $B(\phi^{n+1}-2\phi^n+\phi^{n-1})$.
An optimal error estimate with a prefactor depending on 
$1/\varepsilon$ only in some lower polynomial order 
is obtained for the two second-order unconditionally 
stable linear schemes for the first time, although 
some progress has been made in \cite{feng_numerical_2003,
	feng_error_2004, kessler_posteriori_2004, yang_error_2009, 
	feng_analysis_2015, feng_analysis_2016}
for the first-order stable schemes in the last dozen years.
We observe that one shortcoming
of the SL-CN scheme is that the convergence analysis requires 
the second stability constant $B > L/2\varepsilon$.
Therefore, instead of the standard Crank-Nicolson
scheme, we now use the diffusive Crank-Nicolson 
scheme, i.e., replacing $\Delta(\phi^{n+1}+\phi^{n})/2$ 
with $\Delta(3\phi^{n+1}+\phi^{n-1})/4$ to approximate
$\Delta\phi(t^{n+\frac12})$.
The proposed method enjoys all the advantages of the SL-CN scheme:
being second order accurate, time semi-discrete system is linear with constant coefficients, both finite element 
methods and spectral methods can be used for spatial discretization 
to conserve volume fraction and satisfy discrete energy dissipation law.
Furthermore, it possesses the following additional advantage:
an optimal error estimate is valid for the special
cases $A=0$ and/or $B=0$. 
We present in this paper the convergence analysis of the fully discrete 
SLD-CN scheme instead of the time semi-discrete scheme presented in the 
previous papers.
Time adaptive numerical results are carried out to 
demonstrate the reliability and robustness of this method.

The present paper is built up as follows.
Section 2 provides SLD-CN scheme for the Cahn-Hilliard equation 
and the proof of its unconditionally energy stability property. 
In Section 3, we establish the error estimate of the fully discrete numerical scheme that does not depend
on $1/\varepsilon$ exponentially .
Some 2-dimensional numerical experiments are then presented in Section 4,
showing that our proposed approaches are more robust than existing 
methods. Some concluding remarks are provided in Section 5.

%%%%%%%%%%%%%%%%%%%%%%%%%%%%%%%%%%%%%%%%%%%%%%%%%%%%%%%%%%%%%%%%%%%%%%%
\section{The stabilized linear semi-implicit Crank-Nicolson scheme}
\label{2}

	We first introduce some notations. For any given function $\phi(t)$ of $t$, we use $\phi^n$ to
	denote an approximation of $\phi(n\tau)$, where $\tau$ is
	the step-size. We will frequently use the shorthand
	notations: $\delta_{t}\phi^{n+1}:=\phi^{n+1}-\phi^{n}$,
	$\delta_{tt}\phi^{n+1}:=\phi^{n+1}-2\phi^{n}+\phi^{n-1}$,
	$\hat{\phi}^{n+\frac{1}{2}}
	:=\frac{3}{2}\phi^{n}-\frac{1}{2}\phi^{n-1}$ and $\hat{\phi}^{n+1}
	:=2\phi^{n}-\phi^{n-1}$.

We now present the stabilized linearly diffusive Crank-Nicolson scheme 
(abbr. SLD-CN) for the Cahn-Hilliard equation \eqref{eq:CH}.
Suppose $\phi^0=\phi_0(\cdot)$ and $\phi^1\approx \phi(\cdot,\tau)$ 
are given, we calculate $\phi^{n+1}, n=1,2,\ldots,N=T/\tau-1$
iteratively, using
\begin{gather}\label{newcn:eq:CN:1}
\frac{\phi^{n+1}-\phi^{n}}{\tau}=\gamma\Delta \mu^{n+\frac{1}{2}},\\
\label{newcn:eq:CN:2}
\mu^{n+\frac{1}{2}}=-\varepsilon \Delta \Big(\frac{3\phi^{n+1}+\phi^{n-1}}{4} \Big)
+\frac{1}{\varepsilon}f\Big(\frac{3}{2}\phi^{n}-\frac{1}{2}\phi^{n-1}\Big)
-A\tau \Delta \delta_{t}\phi^{n+1}
+B\delta_{tt}\phi^{n+1},
\end{gather}
where $A$ and $B$ are two non-negative constants to stabilize the scheme.

In this paper, we assume that potential function $F(\phi)$ whose 
derivative $f(\phi)$ is uniformly bounded, i.e.
\begin{equation}\label{newcn:eq:Lip}
\max_{\phi\in\mathbf{R}} | f'(\phi) | \le L,
\end{equation}
where $L$ is a non-negative constant.
\begin{remark}\label{newcn:rmk:Lip}
	Note that, Caffarelli proved that the maximum norm of the solution 
	to the Cahn-Hilliard equation is bounded for a truncated potential 
	$F$ with quadratic growth at infinities in \cite{caffarelli_l_1995}.
	On the other hand, for a more general potential $F$, Feng and Prohl \cite{feng_numerical_2005} proved that if the Cahn-Hilliard equation converges to 
	its sharp-interface limit, then its solution has a $L^\infty$ bound.
	Therefore, it has been a common practice
	(cf. \cite{kessler_posteriori_2004,shen_numerical_2010,condette_spectral_2011}) to consider the Cahn-Hilliard equations is satisfied with a truncated double-well potential $F$ such that \eqref{newcn:eq:Lip}.
\end{remark}

	For the Ginzburg-Landau double-well potential $F(\phi)=\frac{1}{4}(\phi^{2}-1)^{2}$, to get a $C^4$ smooth
	double-well potential with quadratic growth, we introduce
	$\tilde{F}(\phi) \in C^{\infty}(\mathbf{R})$ as a smooth
	mollification of
	\begin{equation}\label{newcn:efnew}
	\hat{F}(\phi)=
	\begin{cases}
	\frac{11}{2}(\phi-2)^{2}+6(\phi-2)+\frac94, &\phi>2, \\
	\frac{1}{4 }(\phi^{2}-1)^{2}, &\phi\in [-2,2], \\
	\frac{11}{2}(\phi+2)^{2}-6(\phi+2)+\frac94, &\phi<-2.
	\end{cases}
	\end{equation}
	with a mollification parameter much smaller than 1, to
	replace $F(\phi)$. Note that the truncation points $-2$ and
	$2$ used here are for convenience only. Other values outside
	of region $[-1,1]$ can be used as well.  For simplicity, we
	still denote the potential function $\tilde{F}$ by $F$.
	
	Our scheme can also be applied to the log-log Flory-Huggins energy potential by similar modification. E.g. the modified Flory-Huggins potential given in [46] satisfies our assumptions.

We introduce some notations which will be used
in the analysis. We use $\|\cdot\|_{m,p}$ to denote the
standard norm of the Sobolev space $W^{m,p}(\Omega)$. In
particular, we use $\|\cdot\|_{L^p}$ to denote the norm of
$W^{0,p}(\Omega)=L^{p}(\Omega)$; $\|\cdot\|_{{m}}$ to denote
the norm of $W^{m,2}(\Omega)=H^{m}(\Omega)$; and $\|\cdot\|$
to denote the norm of $W^{0,2}(\Omega)=L^{2}(\Omega)$.  Let
$(\cdot, \cdot)$ represent the $L^{2}$ inner product.  In
addition, define for $p\geq 0$
\begin{equation}\notag
H^{-p}(\Omega):=\left(H^{p}(\Omega)\right)^{*},\quad
H_{0}^{-p}(\Omega):=\left\{ u \in H^{-p}(\Omega),
\ \langle u,1 \rangle_{p}=0 \right\},
\end{equation}
where $\langle \cdot,\cdot \rangle_{p}$ stands for the dual product
between $H^{p}(\Omega)$ and $H^{-p}(\Omega)$. We denote
$L_{0}^{2}(\Omega):= H_{0}^{0}(\Omega)$. For
$v \in L_{0}^{2}(\Omega)$, let
$-\Delta^{-1}v:=v_{1} \in H^{1}(\Omega)\cap
L_{0}^{2}(\Omega)$, where $v_{1}$ is the solution to
\begin{equation}\notag
-\Delta v_{1}=v \ \ {\rm in}\  \Omega ,\quad 
\ \frac{\partial v_{1}}{\partial n}=0 \ \ {\rm on}\  \partial \Omega,
\end{equation}
and $\|v\|_{-1}:=\sqrt{(v,-\Delta^{-1}v) }$.

Following identities and inequality will 
be used frequently.
\begin{equation}\label{newcn:eq:ID:1}
2(h^{n+1}-h^n, h^{n+1}) = \|h^{n+1}\|^2 - \|h^n\|^2 + \|h^{n+1}-h^n\|^2,
\end{equation}
\begin{equation}\label{newcn:eq:ID:3}
(u, v) \leq \|u\|_{-1}  \| \nabla v\|, \quad \forall\   u\in L_0^2, v\in H^1.
\end{equation}

\begin{theorem}\label{newcn:cn}
	Under the condition
	\begin{equation}\label{newcn:eq:CN:ABcond}
	A\geq \dfrac{L^{2}}{16\varepsilon^{2}}\gamma, \quad  
	B\geq \dfrac{L}{2\varepsilon},
	\end{equation}
	the following energy dissipation law
	\begin{equation}\label{newcn:eq:CN:Edis}
	\begin{split}
	E_{C}^{n+1}\leq& E_{C}^{n}-\Big(2\sqrt{\frac{A}{\gamma}}-\frac{L}{2\varepsilon}\Big)
	\|\delta_{t}\phi^{n+1}\|^{2}
	-\Big(\frac{B}{2}-\frac{L}{4\varepsilon}\Big)\|\delta_{tt}\phi^{n+1}\|^{2}\\
	&-\frac{\varepsilon}{8}\|\nabla\delta_{tt} \phi^{n+1}\|^{2},\hspace{1cm} \forall n\geq1,
	\end{split}
	\end{equation}
	holds for the scheme (\ref{newcn:eq:CN:1})-(\ref{newcn:eq:CN:2}), where
	\begin{equation}\label{newcn:eq:CN:E}
	E_{C}^{n+1}=E_{\varepsilon}(\phi^{n+1})
	+\Big(\frac{L}{4\varepsilon}+\frac{B}{2}\Big)\|\delta_{t}\phi^{n+1}\|^{2}
	+\frac{\varepsilon}{8}\|\nabla\delta_t \phi^{n+1}\|^{2}.
	\end{equation}
\end{theorem}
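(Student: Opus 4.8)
The plan is to test the scheme against a suitable multiplier, exactly as in the continuous energy law, and then absorb the error terms coming from the explicit treatment of $f$ and from the diffusive modification of the Laplacian into the stabilization terms. First I would pair \eqref{newcn:eq:CN:1} with $-\Delta^{-1}$ applied to something, or more directly, take the $L^2$ inner product of \eqref{newcn:eq:CN:2} with $\delta_t\phi^{n+1}=\phi^{n+1}-\phi^n$ and use \eqref{newcn:eq:CN:1} to rewrite the left-hand side: since $\delta_t\phi^{n+1}=\tau\gamma\Delta\mu^{n+\frac12}$, we get $(\mu^{n+\frac12},\delta_t\phi^{n+1}) = \tau\gamma(\mu^{n+\frac12},\Delta\mu^{n+\frac12}) = -\tau\gamma\|\nabla\mu^{n+\frac12}\|^2 \le 0$. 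So the whole game is to show that $(\mu^{n+\frac12},\delta_t\phi^{n+1})$, computed from the right-hand side of \eqref{newcn:eq:CN:2}, dominates $E_C^{n+1}-E_C^n$ plus the claimed nonnegative dissipation terms.

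The right-hand side of \eqref{newcn:eq:CN:2} contributes four pieces when paired with $\delta_t\phi^{n+1}$. The term $-A\tau\Delta\delta_t\phi^{n+1}$ gives $A\tau\|\nabla\delta_t\phi^{n+1}\|^2$; the term $B\,\delta_{tt}\phi^{n+1}$ gives $B(\delta_{tt}\phi^{n+1},\delta_t\phi^{n+1})$, which by \eqref{newcn:eq:ID:1} (with $h^{n+1}=\delta_t\phi^{n+1}$, $h^n=\delta_t\phi^n$, since $\delta_{tt}\phi^{n+1}=\delta_t\phi^{n+1}-\delta_t\phi^n$) equals $\frac B2(\|\delta_t\phi^{n+1}\|^2-\|\delta_t\phi^n\|^2+\|\delta_{tt}\phi^{n+1}\|^2)$. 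The diffusive term $-\varepsilon\Delta\big(\frac{3\phi^{n+1}+\phi^{n-1}}4\big)$ paired with $\delta_t\phi^{n+1}$ gives $\varepsilon\big(\nabla\frac{3\phi^{n+1}+\phi^{n-1}}4,\nabla\delta_t\phi^{n+1}\big)$; here I would write $\frac{3\phi^{n+1}+\phi^{n-1}}{4}=\frac{\phi^{n+1}+\phi^n}{2}+\frac14\delta_{tt}\phi^{n+1}$ so that the "standard" Crank–Nicolson part produces the clean telescoping term $\frac\varepsilon2(\|\nabla\phi^{n+1}\|^2-\|\nabla\phi^n\|^2)$ and the extra $\frac\varepsilon4(\nabla\delta_{tt}\phi^{n+1},\nabla\delta_t\phi^{n+1})$, which again by \eqref{newcn:eq:ID:1} gives $\frac\varepsilon8(\|\nabla\delta_t\phi^{n+1}\|^2-\|\nabla\delta_t\phi^n\|^2+\|\nabla\delta_{tt}\phi^{n+1}\|^2)$ — this is exactly where the $\frac\varepsilon8\|\nabla\delta_t\phi\|^2$ term in $E_C$ and the $-\frac\varepsilon8\|\nabla\delta_{tt}\phi^{n+1}\|^2$ on the right come from. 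Finally the nonlinear term $\frac1\varepsilon f(\hat\phi^{n+\frac12})$ with $\hat\phi^{n+\frac12}=\frac32\phi^n-\frac12\phi^{n-1}$ must be compared with the exact increment $\frac1\varepsilon\int_\Omega(F(\phi^{n+1})-F(\phi^n))\,dx$; I would Taylor-expand $F(\phi^{n+1})-F(\phi^n)=f(\xi)(\phi^{n+1}-\phi^n)$ for some intermediate $\xi$, or more carefully use a second-order expansion about $\hat\phi^{n+\frac12}$, to write $\frac1\varepsilon(f(\hat\phi^{n+\frac12}),\delta_t\phi^{n+1}) = \frac1\varepsilon\int_\Omega(F(\phi^{n+1})-F(\phi^n))\,dx + \mathcal R$, where $|\mathcal R|$ is bounded using \eqref{newcn:eq:Lip} by $\frac{L}{2\varepsilon}$ times quadratic quantities in $\delta_t\phi^{n+1}$, $\delta_t\phi^n$ (the point being $\phi^{n+1}-\hat\phi^{n+\frac12}=\delta_t\phi^{n+1}-\frac12\delta_t\phi^n$ and similarly for $\phi^n$).

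Collecting everything, I expect the inequality $0\ge (\mu^{n+\frac12},\delta_t\phi^{n+1})$ to rearrange into
\begin{equation}\notag
E_C^{n+1} - E_C^n \le -A\tau\|\nabla\delta_t\phi^{n+1}\|^2 - \frac B2\|\delta_{tt}\phi^{n+1}\|^2 - \frac\varepsilon8\|\nabla\delta_{tt}\phi^{n+1}\|^2 + |\mathcal R|,
\end{equation}
and then the remaining work is to bound $|\mathcal R|$ against the two favorable terms $A\tau\|\nabla\delta_t\phi^{n+1}\|^2$ and $\frac B2\|\delta_{tt}\phi^{n+1}\|^2$: a term like $\frac{L}{4\varepsilon}\|\delta_t\phi^{n+1}\|^2$ in $\mathcal R$ I would convert, via \eqref{newcn:eq:ID:3} and Young's inequality, into something like $\frac{L}{4\varepsilon}\|\delta_t\phi^{n+1}\|_{-1}\|\nabla\delta_t\phi^{n+1}\|$ — wait, more precisely one uses $\|\delta_t\phi^{n+1}\| = \|\tau\gamma\Delta\mu^{n+\frac12}\|$-type control, or rather bounds $\|\delta_t\phi^{n+1}\|^2 \le$ (something) $\times \tau\|\nabla\delta_t\phi^{n+1}\|^2$; the constant $A\ge \frac{L^2}{16\varepsilon^2}\gamma$ is precisely what is needed so that $2\sqrt{A/\gamma}$ appears and the coefficient $2\sqrt{A/\gamma}-\frac L{2\varepsilon}$ on $\|\delta_t\phi^{n+1}\|^2$ comes out nonnegative (AM–GM between the $A\tau\|\nabla\delta_t\|^2$ term and a $\frac{1}{\tau}\|\delta_t\|_{-1}^2$-type term). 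The main obstacle is bookkeeping the nonlinear remainder $\mathcal R$ sharply enough: one must split it into a part absorbed by $A$ (controlled in the $H^{-1}$/$\nabla$ pairing, giving the $2\sqrt{A/\gamma}$ coefficient) and a part absorbed by $B$ (the $\delta_{tt}$ contributions), and verify the exact constants $\frac L{2\varepsilon}$, $\frac L{4\varepsilon}$ match. Everything else is telescoping via the two algebraic identities \eqref{newcn:eq:ID:1} and \eqref{newcn:eq:ID:3}.
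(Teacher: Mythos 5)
Your proposal is correct and follows essentially the same route as the paper: test with $\mu^{n+\frac12}$ and $\delta_t\phi^{n+1}$, split $\frac{3\phi^{n+1}+\phi^{n-1}}{4}=\frac{\phi^{n+1}+\phi^n}{2}+\frac14\delta_{tt}\phi^{n+1}$ and telescope via \eqref{newcn:eq:ID:1}, Taylor-expand $F$ about $\hat\phi^{n+\frac12}$ with the Lipschitz bound \eqref{newcn:eq:Lip}, and recover the $2\sqrt{A/\gamma}$ coefficient by AM--GM between the $\gamma\tau\|\nabla\mu^{n+\frac12}\|^2$ and $A\tau\|\nabla\delta_t\phi^{n+1}\|^2$ dissipation terms (the paper implements this by pairing \eqref{newcn:eq:CN:1} with $2\sqrt{A/\gamma}\,\tau\delta_t\phi^{n+1}$, which is the same inequality). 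The remaining bookkeeping you defer --- absorbing $\frac{L}{4\varepsilon}\|\delta_t\phi^{n}\|^2$ and $\frac{L}{4\varepsilon}\|\delta_t\phi^{n+1}\|^2$ into $E_C^n$ and $E_C^{n+1}$ and the $\delta_{tt}$ remainder into the $B$ term --- works out exactly as in the paper.
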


\begin{proof}
	Pairing \eqref{newcn:eq:CN:1} with
	$\tau \mu^{n+\frac{1}{2}}$, \eqref{newcn:eq:CN:2} with
	$-\delta_t\phi^{n+1}$, and combining the results, we get
	\begin{equation}\label{newcn:cn5}
	\begin{split}
	&\frac{\varepsilon}{2}(\|\nabla \phi^{n+1}\|^{2} -
	\|\nabla \phi^{n}\|^{2})
	+\frac{\varepsilon}{8}(\|\nabla\delta_t \phi^{n+1}\|^{2} -
	\|\nabla\delta_t \phi^{n}\|^{2})
	+\frac{1}{\varepsilon}(f\big(\hat{\phi}^{n+\frac{1}{2}}\big),\delta_t\phi^{n+1} )\\
	=&-\gamma\tau \|\nabla \mu^{n+\frac{1}{2}}\|^{2} 
	-A\tau \|\nabla\delta_{t}\phi^{n+1}\|^{2}
	-\frac{\varepsilon}{8}\|\nabla\delta_{tt} \phi^{n+1}\|^{2}
	-B(\delta_{tt}\phi^{n+1},\delta_t\phi^{n+1}).
	\end{split}
	\end{equation}
	Pairing (\ref{newcn:eq:CN:1}) with $2\sqrt{A/\gamma}\tau\delta_t\phi^{n+1}$, 
	then using Cauchy-Schwarz inequality, we get
	\begin{equation}\label{newcn:cn7}
	\begin{split}
	2\sqrt{\tfrac{A}{\gamma}}\|\delta_{t} \phi^{n+1}\|^{2}
	= &-2\sqrt{A\gamma}\tau(\nabla \mu^{n+\frac12},\nabla\delta_t 
	\phi^{n+1})\\\leq& \gamma\tau \|\nabla \mu^{n+\frac12}\|^{2} +
	A\tau\|\nabla \delta_{t}\phi^{n+1}\|^{2}.
	\end{split}
	\end{equation}
	To handle the term involving $f$, we expand $F(\phi^{n+1})$ 
	and $F(\phi^n)$ at
	$\hat{\phi}^{n+\frac{1}{2}}$ as
	\begin{align*}
	F(\phi^{n+1}) &= F(\hat{\phi}^{n+\frac{1}{2}})+f(\hat{\phi}^{n+\frac{1}{2}})(\phi^{n+1}-\hat{\phi}^{n+\frac{1}{2}})+\frac{1}{2}f'(\xi^{n}_{1})(\phi^{n+1}-\hat{\phi}^{n+\frac{1}{2}})^{2},\\
	F(\phi^{n}) &= F(\hat{\phi}^{n+\frac{1}{2}})+f(\hat{\phi}^{n+\frac{1}{2}})(\phi^{n}-\hat{\phi}^{n+\frac{1}{2}})+\frac{1}{2}f'(\xi^{n}_{2})(\phi^{n}-\hat{\phi}^{n+\frac{1}{2}})^{2},
	\end{align*}
	where $\xi^{n}_1$ is a number between $\phi^{n+1}$ and
	$\hat{\phi}^{n+\frac12}$, $\xi^{n}_2$ is a number
	between $\phi^n$ and $\hat{\phi}^{n+\frac12}$.  Taking
	the difference of above two equations, we have
	\begin{equation}\nonumber
	\begin{split}
	&  F(\phi^{n+1})-F(\phi^{n}) - f(\hat{\phi}^{n+\frac{1}{2}})(\phi^{n+1}-\phi^{n})\\
	={} &  \frac{1}{2}f'(\xi^{n}_{1})
	\left[(\phi^{n+1}-\hat{\phi}^{n+\frac{1}{2}})^{2} - (\phi^{n}-\hat{\phi}^{n+\frac{1}{2}})^{2}
	\right]
	- \frac{1}{2}(f'(\xi^{n}_{2})-f'(\xi^{n}_{1}))(\phi^{n}
	-\hat{\phi}^{n+\frac{1}{2}})^{2}\\
	={} &  
	\frac{1}{2}f'(\xi^{n}_{1})\delta_{t}\phi^{n+1}\delta_{tt}\phi^{n+1}
	- \frac{1}{8}(f'(\xi^{n}_{2})-f'(\xi^{n}_{1}))(\delta_{t}\phi^{n})^{2}\\
	\le{} & 
	\frac{L}{4}(|\delta_t \phi^{n+1}|^2 + |\delta_{tt}\phi^{n+1}|^2)
	+ \frac{L}{4}|\delta_t\phi^n|^2.
	\end{split}
	\end{equation}
	Multiplying the above equation with $1/\varepsilon$, then taking integration leads to
	\begin{equation}\label{newcn:eq:cn2}
	\begin{split}
	&\frac{1}{\varepsilon}(F(\phi^{n+1})-F(\phi^{n})-
	f(\hat{\phi}^{n+\frac{1}{2}})\delta_t\phi^{n+1}, 1) \\
	\leq&
	\frac{L}{4\varepsilon}(\|\delta_t \phi^{n+1}\|^2 + \|\delta_{tt}\phi^{n+1}\|^2
	+ \|\delta_t\phi^n\|^2).
	\end{split}
	\end{equation}
	For the term involving $B$, by using identity\eqref{newcn:eq:ID:1} with $h^{n+1}=\delta_t \phi^{n+1}$, one gets
	\begin{equation}\label{newcn:cs11-0}
	-B(\delta_{tt}\phi^{n+1},\delta_t\phi^{n+1})
	=-\frac{B}{2}\|\delta_{t}\phi^{n+1}\|^{2}+\frac{B}{2}
	\|\delta_{t}\phi^{n}\|^{2}-\frac{B}{2}\|\delta_{tt}\phi^{n+1}\|^{2}.
	\end{equation}
	Summing up \eqref{newcn:cn5}-\eqref{newcn:cs11-0}, we obtain
	\begin{equation}\label{newcn:cn8}
	\begin{split}
	&\frac{\varepsilon}{2}(\|\nabla \phi^{n+1}\|^{2} -
	\|\nabla \phi^{n}\|^{2}) 
	+ \frac{1}{\varepsilon}(F(\phi^{n+1})-F(\phi^{n}),1)
	+
	\frac{B}{2}(\|\delta_{t}\phi^{n+1}\|^{2} -\|\delta_{t}\phi^{n}\|^{2})\\
	&+\frac{\varepsilon}{8}(\|\nabla\delta_t \phi^{n+1}\|^{2} -
	\|\nabla\delta_t \phi^{n}\|^{2})\\
	\leq & -
	2\sqrt{\frac{A}{\gamma}}\|\delta_{t}\phi^{n+1}\|^{2}
	+\frac{L}{4\varepsilon}\|\delta_{t} \phi^{n+1}\|^{2}
	+\frac{L}{4\varepsilon}\|\delta_{t} \phi^{n}\|^{2}
	-\frac{B}{2}\|\delta_{tt}\phi^{n+1}\|^{2}\\
	&+\frac{L}{4\varepsilon}\|\delta_{tt} \phi^{n+1}\|^{2}
	-\frac{\varepsilon}{8}\|\nabla\delta_{tt} \phi^{n+1}\|^{2},
	\end{split}
	\end{equation}
	which is the energy estimate \eqref{newcn:eq:CN:Edis}.
\end{proof}

\begin{remark}\label{newcn:rmk:stab3}
	The discrete Energy $E_C$ defined in equation \eqref{newcn:eq:CN:E} 
	is a second order approximation to the original energy $E_{\varepsilon}$, since $ \| \delta_t \phi^{n+1} \|^2 \sim O(\tau^2)$. On the
	other side, summing up the equation \eqref{newcn:eq:CN:Edis} 
	for $n=1,\ldots, N$, we get
	\begin{equation}\label{newcn:eq:steady}
	\begin{split}
	&E^{N+1}_C + \sum_{n=1}^N \left( 
	\Big(2\sqrt{\frac{A}{\gamma}}-\frac{L}{2\varepsilon}\Big)
	\|\delta_{t}\phi^{n+1}\|^{2}
	+\Big(\frac{B}{2}-\frac{L}{4\varepsilon}\Big)
	\|\delta_{tt}\phi^{n+1}\|^{2}+\frac{\varepsilon}{8}
	\|\nabla\delta_{tt} \phi^{n+1}\|^{2}\right)\\
	 \leq& E^1_C.
	\end{split}
	\end{equation}
	Under the condition $\eqref{newcn:eq:CN:ABcond}$,
	$\big(2\sqrt{\frac{A}{\gamma}}-\frac{L}{2\varepsilon}\big)$ and 
	$\big(\frac{B}{2}-\frac{L}{4\varepsilon}\big)$ are positive constants.
	So, for given $\tau$, by taking $N\rightarrow\infty$, we get 
	%$\tau \rightarrow 0$, then
	$\|\delta_t \phi^{N+1}\| \rightarrow 0$. On the other hand,  if we leave a small part of $A$ term in its original form in the proof, denoted by $\delta A$, we will have an diffusion term $\delta A \sum_{n=1}^{N}\tau \|\nabla \delta_t \phi^{n+1}\|^2$, we obtain $\| \nabla \delta_t \phi^{N+1}\|^2 \rightarrow 0$ as well, which means the discrete Energy converge to the original Energy: $E_C^{N+1}\rightarrow E_{\varepsilon}(\phi^{N+1})$
	and the system eventually will converge to a steady state
	for long time run.
\end{remark}

%%%%%%%%%%%%%%%%%%%%%%%%%%%%%%%%%%%%%%%%%%%%%%%%%%%%%%%%%%%%%%%%%%%
\section{Error estimate}
\label{3}
We use a Legendre Galerkin method similar as in
\cite{shen_efficient_1995, shen_efficient_2015, 
	yang_yu_efficient_2017} for spatial
discretization in 2-dimensional domain.  
Let $L_k(x)$ denote the Legendre polynomial of degree $k$. 
We define
\[
V_M = \mbox{span}\{\,\varphi_k(x)\varphi_j(y),\ k,j=0,\ldots, M-1\,\}\in H^1(\Omega),
\]
where
$\varphi_0(x) = L_0(x); \varphi_1(x)=L_1(x);  \varphi_k(x)=L_k(x)-L_{k+2}(x), k=2,\ldots, M\!-\!1
$,
be the Galerkin approximation space for both $\phi^{n+1}_h$ and 
$\mu^{n+1}_h$. Then the full discretized form for the SLD-CN scheme reads:
Find $(\phi^{n+1}_h, \mu^{n+\frac12}_h) \in (V_M)^{2}$ such that
\begin{equation}\label{eq:CN:full:1}
\frac{1}{\tau}(\phi_h^{n+1}-\phi_h^n, \psi_h)=-\gamma( \nabla \mu_h^{n+\frac{1}{2}}, \nabla \psi_h),   \qquad \forall \psi_h \in V_M,
\end{equation}
\begin{equation}\label{eq:CN:full:2}
\begin{split}
(\mu_h^{n+\frac{1}{2}}, \varphi_h)=&
\varepsilon \left(\nabla \frac{3\phi_h^{n+1}+\phi_h^{n-1}}{4}, 
\nabla \varphi_h\right)
+\frac{1}{\varepsilon}\left(f\Big(\frac{3}{2}\phi_h^{n}-
\frac{1}{2}\phi_h^{n-1}\Big),\varphi_h \right)\\
&+A\tau (\nabla \delta_{t}\phi_h^{n+1}, \nabla \varphi_h )
+B(\delta_{tt}\phi_h^{n+1},\varphi_h ), 
\qquad\  \forall \varphi_h \in V_M.
\end{split}
\end{equation}

In this section, we shall establish the error estimate of
the full discretized form \eqref{eq:CN:full:1}-\eqref{eq:CN:full:2}
for SLD-CN scheme. We will show that, if the interface is well
developed in the initial condition, the error bounds depend
on $1/\varepsilon$ only in some lower polynomial order for small $\varepsilon$.  Let $\phi(t^{n})$ be the exact solution at time 
$t=t^n$ to equation of \eqref{eq:CH}, which is abbreviated as $\phi^n$. 
Let $\phi_h^{n}$ be the solution at time $t=t^{n}$ to the full discrete numerical scheme (\ref{newcn:eq:CN:1})-(\ref{newcn:eq:CN:2}), 
we define error function $e^{n}:=\phi_h^{n}-\phi^n$.
%$(e^{0}=0)$.

We introduce the Ritz projection operator 
$R_h:H^1(\Omega)\rightarrow V_M$ satisfying 
\begin{equation}\label{eq:Rh1}
(\nabla(R_{h}\varphi-\varphi),\nabla\psi_h)=0,
\ \forall \psi_h \in V_{M}, \ \ \ \ (R_{h}\varphi-\varphi,1)=0.
\end{equation}

The following estimates hold for the Ritz projection \cite{Brenner_Mathematical_2010}: %Theorem 5.8.3 5.9.9
\begin{equation}\label{eq:Rh2}
\|R_{h}\varphi\|_{1,p}\leq C\|\varphi\|_{1,p}, 
\ \  \forall 1<p\leq\infty,
\end{equation}
\begin{equation}\label{eq:Rh3}
\|R_{h}\varphi -\varphi\|_{L^p}
+h\|R_{h}\varphi -\varphi\|_{1,p}\leq C h^{q+1}
\|\varphi\|_{q+1,p},\ \ \forall1<p\leq\infty. 
\end{equation}
\begin{equation}\label{eq:Rh4}
\|R_{h}\varphi -\varphi\|
+h^{-1}\|R_{h}\varphi -\varphi\|_{-1}\leq C h^{q+1}
\|\varphi\|_{H^{q+1}}.
\end{equation}

Define $\rho^{n+1}:=R_{h}\phi^{n+1} -\phi^{n+1}$ and $\sigma_{h}^{n+1}:=\phi_{h}^{n+1} -R_{h}\phi^{n+1}$, then $e^{n+1}=\rho^{n+1}+\sigma_{h}^{n+1}$, $\sigma_{h}^{0}\equiv0$. 
By the Ritz projection, $(\nabla\rho^{n+1},\nabla \psi_h)=0$, 
for all $\psi_h \in V_M$.
The proofs base on Galerkin formulation. 
Spectral element method can be used for spatial discretization to
satisfy the estimates for the Ritz projection and error estimate.

Before presenting the detailed error analysis, we first make
some assumptions. For simplicity, we take $\gamma=1$ in this
section, and assume $0<\varepsilon<1$.  We use notation
$\lesssim$ in the way that $f\lesssim g$ means that
$f \le C g$ with positive constant $C$ independent of $\tau$
and $\varepsilon$.

\begin{assump}\label{newcn:ap:1}
	We assume that $f$ either satisfies the following 
	properties (i) and (ii), or (i) and (iii).
	\begin{enumerate}
		\item [(i)]$F\in C^{4}(\mathbf{R})$, 
		$F(\pm 1)=0$, and $F>0$ elsewhere. There exist two
		non-negative constants $B_0,B_1$, such that
		\begin{equation}\label{eq:AP:Fcoercive} \phi^2 \le
		B_0 + B_1 F(\phi),\quad \forall\; \phi\in\mathbf{R}.
		\end{equation}
		\item[(ii)] $f=F'$. $f'$ and $f''$ are uniformly bounded, i.e. 
		$f$ satisfies \eqref{newcn:eq:Lip} and
		\begin{equation}\label{newcn:eq:Lip2}
		\max_{\phi\in\mathbf{R}} | f''(\phi) | \le L_2,
		\end{equation}
		where $L_2$ is a non-negative constant.
		\item[(iii)] $f$  satisfies for some finite $2\le p \leq %\frac{2(d-1)}{d-2}$ 
		3+ \frac{d}{3(d-2)}$
		and positive numbers $\tilde{c}_{i} >0$, $i=0,\ldots,5$,
		\begin{equation}\label{eq:AP:fp}
		\tilde{c}_1 |\phi|^{p-2} - \tilde{c}_0 \leq f'(\phi) \leq \tilde{c}_2 |\phi|^{p-2}+\tilde{c}_3,
		\end{equation}
		\begin{equation}\label{eq:AP:fpp}
		| f''(\phi) | \leq \tilde{c}_4 |\phi|^{(p-3)^+}+\tilde{c}_5,
		\end{equation}
		where for any real number $a$, the notation $(a)^+ := \max\{ a, 0\}$.\qed
	\end{enumerate}
\end{assump}

Note that Assumption \ref{newcn:ap:1} (ii) is a special case of 
Assumption \ref{newcn:ap:1} (iii) with $p=2$. The commonly-used 
quartic double-well potential satisfies Assumption (i) and 
(iii) with $p=4$. Furthermore, from equation \eqref{eq:AP:fp} 
we easily get 
\begin{equation} \label{eq:AP:fpl}
-(f'(\phi) u, u) \le \tilde{c}_0 \| u \|^2, \quad \forall\, 
u\in L^2(\Omega).
\end{equation} 

\begin{assump}\label{newcn:ap:2}
	We assume that $\phi^0$ is smooth enough. More precisely,
	there exist constant $m_{0}$ and non-negative constants 
	$\sigma_{1}, \ldots, \sigma_6$, such that
	\begin{equation}\label{eq:AP:m0}
	m_{0}:=\frac{1}{|\Omega|}\int_{\Omega}\phi^{0}(x){\rm d}x \in (-1,1),
	\end{equation}	
	\begin{equation}\label{eq:AP:E0}
	E_{\varepsilon}(\phi^{0}):=\frac{\varepsilon}{2}\|\nabla \phi^{0}\|^{2}+\frac{1}{\varepsilon}\|F(\phi^{0})\|_{L^{1}}\lesssim \varepsilon^{-\sigma_{1}},
	\end{equation}
	\begin{equation}\label{eq:AP:2}
	\|\phi_t^0\|_{-1}^2\lesssim \varepsilon^{-\sigma_2},
	\end{equation}
	\begin{equation}\label{eq:AP:3}
	\|\phi_{t}^0\|^2\lesssim \varepsilon^{-\sigma_3};
	\end{equation}
	\begin{equation}\label{eq:AP:4}
	\varepsilon\|\nabla \phi_{t}^0\|^2 
	+ \frac{1}{\varepsilon} (f'(\phi^0)\phi_t^0,\phi_t^0) 
	\lesssim \varepsilon^{-\sigma_{4}},
	\end{equation}
	\begin{equation}\label{eq:AP:5}
	\|\Delta^{-1}\phi_{tt}^0\|^2
	\lesssim \varepsilon^{-\sigma_{5}},
	\end{equation}
	\begin{equation}\label{eq:AP:6}
	\|\phi_{tt}^0\|_{-1}^2
	\lesssim \varepsilon^{-\sigma_{6}},
	\end{equation}
	\begin{equation}\label{eq:AP:7}
	\|\phi_{tt}^0\|^2
	\lesssim \varepsilon^{-\sigma_{7}}.
	\end{equation}
\end{assump}

Given Assumption \ref{newcn:ap:1} (i)(iii) and Assumption 
\ref{newcn:ap:2}, we have following estimates for the exact
solution to the Cahn-Hilliard equation. 

\begin{assump}\label{lm:1}
	Suppose the exact solution of (\ref{eq:CH}) has the following regularities:\\
	\begin{enumerate}
		\item[(1)] $\Delta^{-1} \phi \in W^{2,2}(0,T;H^{-1})$, or
		$$\int_{0}^{T}\|\Delta^{-1}\phi_{tt}\|_{-1}^{2}{\rm d}t \leq \varepsilon^{-\rho_{1}},$$
		\item[(2)] $\phi\in W^{2,2}(0,T;H^{-1}\bigcap H^{1})$, or
		\begin{align*}
		\int_{0}^{T}\|\phi_{tt}\|_{-1}^{2}{\rm d}t &\leq \varepsilon^{-\rho_{2}},\quad&
		\int_{0}^{T}\| \nabla \phi_{tt}\|^{2}{\rm d}t &\leq \varepsilon^{-\rho_{3}},\quad&
		\int_{0}^{T}\|\phi_{tt} \|^2_{H^{q+1}}{\rm d}t  &\leq \varepsilon^{-\rho_{4}},
		\end{align*}
		\item[(3)] $\phi \in W^{1,2}(0,T;H^{1})$, or
		$$\int_{0}^{T}\|\nabla \phi_{t}\|^{2}{\rm d}t \leq \varepsilon^{-\rho_{5}},
		\quad  \int_{0}^{T}\| \phi_t\|^2_{H^{q+1}}{\rm d}t\leq \varepsilon^{-\rho_{6}},$$
		\item[(4)] $$\tau\sum_{n=1}^{N+1} \| \phi^{n}\|^2_{H^{q+1}} \leq \varepsilon^{-\rho_{7}}, \quad 
		\tau\sum_{n=1}^{N+1} \| \mu^{n}\|^2_{H^{q+1}} \leq \varepsilon^{-\rho_{8}},$$
		\item[(5)] $$ \max_{1\leq n\leq N+1}\|\phi^n\|^2_{H^{q+1}} \leq \varepsilon^{-\rho_9}.$$
	\end{enumerate}
	Here $\rho_1=\beta_8$, $\rho_2=\beta_4$, $\rho_3=\beta_6$,
	$\rho_4=\beta_{11}$, $\rho_5=\beta_2+1$, $\rho_6=\beta_{10}$,
	$\rho_7=\sigma_1+3$, $\rho_8=\beta_{12}$, $\rho_9=\sigma_1+3$, where 
	$\beta_{j}, j=1\cdots12$ are non-negative constants which can be control by $\sigma_{1}, \sigma_{2}, \sigma_{3}.$
\end{assump}
An estimate for $\rho_1, \ldots, \rho_9$,  $q=1$ is given in Appendix.

To get the convergence result of the second order schemes, 
we need make some assumptions on the scheme used to calculate 
the numerical solution at first time step. 

\begin{assump}\label{ap:3}
	We assume that an appropriate scheme is used to
	calculate the numerical solution at first step, such that
	\begin{equation}\label{eq:AP:m1}
	m_1:=\frac{1}{|\Omega|}\int_{\Omega}\phi^{1}_h(x){\rm d}x = m_0,
	\end{equation}
	\begin{equation}\label{eq:AP:phi1}
	E_\varepsilon (\phi^1_h) \le E_\varepsilon (\phi^0_h) \lesssim\varepsilon^{-\sigma_1 },
	\end{equation}
	\begin{equation} \label{eq:AP:phi1Hn1} 
	\frac{1}{\tau}\| \phi^1_h
	- \phi^0_h \|^2_{-1}  \lesssim
	\varepsilon^{-\sigma_1 },
	\end{equation}
	\begin{equation} \label{eq:AP:phi1L2} 
	\frac{1}{\tau}\|\phi^1_h-\phi^0_h\|^2 \lesssim\varepsilon^{-\sigma_1 -2},
	\end{equation}
	and there exist a constant $0<\tilde{\sigma}_1<\rho_5+5$ and
	$0<\tilde{\sigma}_2<\max\{\rho_6 +1,\rho_7+3,\rho_8+1\}$ such that
	\begin{equation}\label{eq:AP:phi1e}
	\| e^1 \|^2_{-1} + A\tau^2\| \nabla e^1 \|^2 \lesssim
	\varepsilon^{-\tilde{\sigma}_1}(\tau^4+h^{2q+4} ),
	\end{equation}
	\begin{equation}\label{eq:AP:phisigma}
	\| \sigma_h^1 \|^2_{-1} + A\tau^2\| \nabla \sigma_h^1 \|^2
	\lesssim \varepsilon^{-\tilde{\sigma}_2}(\tau^4+h^{2q+4} ).
	\end{equation}
\end{assump}

According to the volume conservation property, we easily get 
the following properties. Because the integration of $\phi^n_h$ is conserved, $\delta_t \phi^n_h$ and $e^n$ belong to $L_0^2(\Omega)$ 
such that we can define $H^{-1}$ norm and
use Poincare's inequality for those quantities.

\begin{lemma}\label{lm:stab}
	Suppose \eqref{eq:AP:m0} and \eqref{eq:AP:m1} holds, then
	the numerical solution of \eqref{newcn:eq:CN:1}-\eqref{newcn:eq:CN:2}
	satisfies
	\begin{equation}\label{eq:CA:conserv}
	\frac{1}{|\Omega|}\int_{\Omega}\phi^{n}_h{\rm d}x=m_0,
	\quad n=1,\ldots, N+1, 
	\end{equation}
	and the error function $e^n$ satisfies
	\begin{equation}\label{eq:CA:e0}
	\int_{\Omega}e^{n}(x){\rm d}x=0,\quad n=1,\ldots, N+1. 
	\end{equation}\qed
\end{lemma}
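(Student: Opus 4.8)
The plan is to exploit the fact that the constant function $1$ belongs to the Galerkin space $V_M$ (indeed $\varphi_0(x)\varphi_0(y) = L_0(x)L_0(y)$ is a constant), so it is an admissible test function in \eqref{eq:CN:full:1}. Choosing $\psi_h \equiv 1$ there and using $\nabla 1 = 0$ on the right-hand side immediately gives $(\phi_h^{n+1}-\phi_h^n, 1) = 0$, that is $\int_\Omega \phi_h^{n+1}\,{\rm d}x = \int_\Omega \phi_h^n\,{\rm d}x$ for every $n \ge 1$. Note that no assumption on $\mu_h^{n+\frac12}$ or on the nonlinear/stabilization terms is needed, since they never enter \eqref{eq:CN:full:1}.

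Next I would run a trivial induction on $n$ to obtain \eqref{eq:CA:conserv}: the base case $n=1$ is precisely Assumption \ref{ap:3}, equation \eqref{eq:AP:m1}, which states $\frac{1}{|\Omega|}\int_\Omega \phi_h^1\,{\rm d}x = m_0$, and the inductive step is the mass identity just derived. (The scheme at $n=1$ invokes $\phi_h^0$, but this is irrelevant to the mass argument because the test function $\psi_h \equiv 1$ annihilates the entire right-hand side.) This establishes the first assertion for $n = 1, \ldots, N+1$.

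For \eqref{eq:CA:e0} I would recall that the exact solution of \eqref{eq:CH} conserves its spatial average: integrating the first equation of \eqref{eq:CH} over $\Omega$ and using the Neumann condition $\partial_n\mu = 0$ gives $\frac{\rm d}{{\rm d}t}\int_\Omega \phi\,{\rm d}x = \gamma\int_{\partial\Omega}\partial_n\mu\,{\rm d}s = 0$, hence $\int_\Omega \phi(t^n)\,{\rm d}x = \int_\Omega \phi^0\,{\rm d}x = m_0|\Omega|$ by \eqref{eq:AP:m0}. Subtracting this from \eqref{eq:CA:conserv} yields $\int_\Omega e^n\,{\rm d}x = \int_\Omega \phi_h^n\,{\rm d}x - \int_\Omega \phi^n\,{\rm d}x = 0$ for $n=1,\ldots,N+1$.

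There is no genuine obstacle here; the only points that deserve a sentence of care are (a) verifying that constants lie in $V_M$, so that the mass-conservation test function is legitimate, and (b) observing that this is exactly why the first-step initialization is required to preserve the mean in Assumption \ref{ap:3}, equation \eqref{eq:AP:m1}. These two observations, together with the zero-mean property of $\delta_t\phi_h^n$ and $e^n$, are also what makes the $H^{-1}$ norm and Poincar\'e's inequality available for these quantities in the subsequent error analysis.
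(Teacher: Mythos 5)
Your proof is correct and is exactly the standard volume-conservation argument the paper has in mind (the lemma is stated with the proof omitted as "easy"): test \eqref{eq:CN:full:1} with $\psi_h\equiv 1\in V_M$, induct from \eqref{eq:AP:m1}, and subtract the conserved mean of the exact solution. No issues.
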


We first carry out a coarse error estimate, which uses
standard approach for the full discretized schemes \eqref{eq:CN:full:1}-\eqref{eq:CN:full:2}.
\begin{prop}(Coarse error estimate)\label{newcn:prio}
	Suppose that $A$ and $ B$ are any non-negative number, $\tau \lesssim \varepsilon^3$. Then for all $N\geq1$, we have estimate
	\begin{equation}\label{newcn:cet3-1}
	\begin{split}
	&\|\sigma_h^{n+1}\|_{-1}^{2}
	+\frac{1}{4}\|\delta_t \sigma_h^{n+1}\|_{-1}^{2}
	+A\tau^2\|\nabla \sigma_h^{n+1}\|^{2}
	+\frac{A\tau^2}{2}\|\nabla \delta_t\sigma_h^{n+1}\|^{2}\\
	&+\frac{A\tau^2}{4}\|\nabla \delta_{tt}\sigma_h^{n+1}\|^{2}
	+\varepsilon \tau \|\nabla \frac{3\sigma_h^{n+1}+\sigma_h^{n-1}}{4}\|^2\\
	\leq
	& \|\sigma_h^{n}\|_{-1}^{2}
	+\frac{1}{4}\|\delta_t \sigma_h^{n}\|_{-1}^{2}
	+A\tau^2\|\nabla \sigma_h^{n}\|^{2}
	+\frac{A\tau^2}{4}\|\nabla \delta_t \sigma_h^{n}\|^{2}\\
	&+\frac{99 L^2}{2\varepsilon^3}  \tau\|\sigma_h^{n}\|^2_{-1}
	+\frac{11 L^2}{2\varepsilon^3} \tau \|\sigma_h^{n-1}\|^2_{-1}
	+\gamma_1(\varepsilon) \tau^4 + \gamma_2(\varepsilon, \tau) h^{2q+4},
	\end{split}
	\end{equation}
	
	\begin{equation}\label{newcn:cet3-2}
	\begin{split}
	&\max_{1\leq n \leq N} \left(\|\sigma_h^{n+1}\|_{-1}^{2}
	+A\tau^2\|\nabla \sigma_h^{n+1}\|^{2} 
	+\frac{1}{4}\|\delta_t \sigma_h^{n+1}\|_{-1}^{2} 
	+\frac{A\tau^2}{2}\|\nabla \delta_t\sigma_h^{n+1}\|^{2}\right)\\
	&+\frac{A\tau^2}{4}\|\nabla \delta_{tt}\sigma_h^{n+1}\|^{2}
	+\varepsilon \tau \|\nabla \frac{3\sigma_h^{n+1}+\sigma_h^{n-1}}{4}\|^2\\
	\lesssim & \exp\left(\frac{55L^2T}{\varepsilon^3}\right)
	(\gamma_1(\varepsilon) \tau^4 + \gamma_2(\varepsilon, \tau ) h^{2q+4}),
	\end{split}
	\end{equation} 
	where
	$ \gamma_1(\varepsilon):=\varepsilon^{-\max\{\rho_{1}+1, \rho_{2}+3, \rho_{3}-1, \rho_{5}+5\}},$\\
	$\gamma_2(\varepsilon,\tau):= \min\{\varepsilon^{-\max\{\rho_6+1,\rho_{7}+3, \rho_{8}+1\}}, 
	\varepsilon^{-(\rho_4+3)}\tau^4\}.$\\
\end{prop}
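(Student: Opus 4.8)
The plan is to derive an error equation by subtracting the consistency form of the exact solution from the fully discrete scheme \eqref{eq:CN:full:1}--\eqref{eq:CN:full:2}, then test it in the $H^{-1}$ inner product to obtain a discrete Gronwall-type recursion. First I would write the error $e^{n+1}=\rho^{n+1}+\sigma_h^{n+1}$, insert it into the scheme, and use the defining properties of the Ritz projection \eqref{eq:Rh1} to kill the cross terms $(\nabla\rho^{n+1},\nabla\psi_h)$. The exact solution satisfies \eqref{eq:CH} pointwise in time, so evaluating it at $t^{n+\frac12}$ and comparing with the diffusive Crank--Nicolson stencils produces local truncation errors: one from replacing $\Delta\phi(t^{n+\frac12})$ by $\Delta(3\phi^{n+1}+\phi^{n-1})/4$, one from $f(\hat\phi^{n+\frac12})$ versus $f(\phi(t^{n+\frac12}))$, one from $(\phi^{n+1}-\phi^n)/\tau$ versus $\phi_t(t^{n+\frac12})$, and the artificial stabilization residuals $A\tau\Delta\delta_t\phi^{n+1}$ and $B\delta_{tt}\phi^{n+1}$ evaluated on the exact solution. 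Each truncation term is $O(\tau^2)$ in an appropriate norm via Taylor expansion with integral remainder, and the regularity bounds in Assumption \ref{lm:1} convert the remainders into the powers of $1/\varepsilon$ that appear in $\gamma_1(\varepsilon)$; the projection error $\rho$ contributes the $h^{2q+4}$ terms through \eqref{eq:Rh3}--\eqref{eq:Rh4}, giving $\gamma_2(\varepsilon,\tau)$ (the two bounds in the $\min$ come from estimating $\phi_{tt}$ either via $W^{2,2}(H^{q+1})$ directly or via a $\tau^4$ trade-off).

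Next I would test the resulting error equation for $\sigma_h$: pair the analogue of \eqref{eq:CN:full:1} with $\tau(-\Delta_h^{-1})\delta_t\sigma_h^{n+1}$ to bring in the $\|\sigma_h^{n+1}\|_{-1}^2-\|\sigma_h^n\|_{-1}^2$ telescoping difference plus $\|\delta_t\sigma_h^{n+1}\|_{-1}^2$, and pair the $\mu$-equation with $\delta_t\sigma_h^{n+1}$ to generate the $\varepsilon\tau\|\nabla(3\sigma_h^{n+1}+\sigma_h^{n-1})/4\|^2$ diffusion term together with the $A$-stabilization terms — here the diffusive Crank--Nicolson splitting $3\phi^{n+1}+\phi^{n-1}=2(\phi^{n+1}+\phi^{n-1})/2+(\delta_{tt}\phi^{n+1})$ style algebra is what yields the clean $A\tau^2$ quantities $\|\nabla\sigma_h^{n+1}\|^2$, $\|\nabla\delta_t\sigma_h^{n+1}\|^2$, $\|\nabla\delta_{tt}\sigma_h^{n+1}\|^2$ on the left. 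The nonlinear term $\tfrac1\varepsilon(f(\tfrac32\phi_h^n-\tfrac12\phi_h^{n-1})-f(\hat\phi^{n+\frac12}),\delta_t\sigma_h^{n+1})$ is handled by the Lipschitz bound \eqref{newcn:eq:Lip}: write the difference as $f'(\cdot)$ times $\tfrac32 e^n-\tfrac12 e^{n-1}$, use \eqref{newcn:eq:ID:3} to pass one factor to $\|\delta_t\sigma_h^{n+1}\|_{-1}$ and hence absorb it into $\varepsilon$-weighted diffusion via Young's inequality, leaving the factors $\tfrac{99L^2}{2\varepsilon^3}\tau\|\sigma_h^n\|_{-1}^2+\tfrac{11L^2}{2\varepsilon^3}\tau\|\sigma_h^{n-1}\|_{-1}^2$ on the right (the $\rho^n,\rho^{n-1}$ pieces of $e$ fold into $h^{2q+4}$). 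The $B$-term is dealt with by the identity \eqref{newcn:eq:ID:1}, and at this stage $\tau\lesssim\varepsilon^3$ is exactly what is needed so that the consumed fraction of the $\varepsilon\tau\|\nabla\cdot\|^2$ term does not overwhelm the gained $\|\cdot\|_{-1}^2$ term. Collecting everything gives the one-step recursion \eqref{newcn:cet3-1}.

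Finally, \eqref{newcn:cet3-2} follows by summing \eqref{newcn:cet3-1} over $n$ and applying the discrete Gronwall lemma: define $\mathcal E^n$ to be the left-hand-side energy quantity, note $\mathcal E^{n+1}\le \mathcal E^n + \tfrac{C L^2}{\varepsilon^3}\tau(\mathcal E^n+\mathcal E^{n-1}) + \gamma_1\tau^4+\gamma_2 h^{2q+4}$ with $N\tau=T$, so the accumulated forcing is $T(\gamma_1\tau^4/\tau + \dots)$ — more precisely $\sum_n(\gamma_1\tau^4+\gamma_2h^{2q+4})=\gamma_1 T\tau^3 + \ldots$, but since we are bounding $\mathcal E^{n+1}$ itself (not a sum) the $\tau^4$ scaling is preserved; the exponential amplification factor is $\exp(CL^2T/\varepsilon^3)$, matching $\exp(55L^2T/\varepsilon^3)$ after fixing the constant in the Gronwall step, and the initial data $\mathcal E^1$ is controlled by Assumption \ref{ap:3}, equations \eqref{eq:AP:phi1e}--\eqref{eq:AP:phisigma}, with $\tilde\sigma_1,\tilde\sigma_2$ chosen below the relevant exponents so that $\mathcal E^1$ is absorbed into the right-hand side. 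The main obstacle is the bookkeeping of $\varepsilon$-powers: one must track, through every Taylor remainder and every Young's inequality, that no inverse power of $\varepsilon$ beyond those collected in $\gamma_1,\gamma_2$ and the Gronwall exponent $\varepsilon^{-3}$ is generated — in particular, the explicit treatment of $f$ must only ever cost $\varepsilon^{-1}$ per factor (via \eqref{newcn:eq:Lip}, not via $L_2$ or higher Sobolev norms of $\phi$), and the stabilization residuals must be shown to be genuinely $O(\tau^2)$ in $H^{-1}$ rather than in a stronger norm, so that the regularity exponents $\rho_5,\rho_6$ rather than something larger enter $\gamma_1$.
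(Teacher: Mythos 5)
Your overall architecture (error equation, $\rho/\sigma_h$ splitting via the Ritz projection, per-step truncation errors, discrete Gronwall) matches the paper, but the central technical step --- the choice of test functions --- is wrong, and the proof as you describe it would not produce the left-hand side of \eqref{newcn:cet3-1}. The paper tests \emph{both} error equations with the diffusive average: $\psi_h=-\Delta^{-1}\bigl(\tfrac{3\sigma_h^{n+1}+\sigma_h^{n-1}}{4}\bigr)$ and $\varphi_h=-\tfrac{3\sigma_h^{n+1}+\sigma_h^{n-1}}{4}$. It is this choice that simultaneously yields (a) the telescoping $\tfrac{1}{2\tau}(\|\sigma_h^{n+1}\|_{-1}^2-\|\sigma_h^{n}\|_{-1}^2)$ plus the positive remainder $\tfrac{1}{8\tau}\|\delta_{tt}\sigma_h^{n+1}\|_{-1}^2$ (identity \eqref{eq:left:1}), (b) the telescoping $A\tau^2$ gradient quantities from the stabilization term (identity \eqref{eq:left:2}), and (c) the full square $\varepsilon\|\nabla\tfrac{3\sigma_h^{n+1}+\sigma_h^{n-1}}{4}\|^2$ that serves as the absorption sink for all twelve Young's inequalities with $\eta_0=2\varepsilon/11$. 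Your proposal pairs with $\tau(-\Delta^{-1})\delta_t\sigma_h^{n+1}$ and $\delta_t\sigma_h^{n+1}$; that is the energy-stability pairing of Theorem \ref{newcn:cn}, and it produces $\|\delta_t\sigma_h^{n+1}\|_{-1}^2$ (not the telescoping $\|\sigma_h^{n+1}\|_{-1}^2$ difference you claim) and a \emph{telescoping} diffusion expression rather than the square in (c). Without (c) the Lipschitz estimate of the nonlinear term has nowhere to dump its $\eta$-fraction except $\|\nabla\delta_t\sigma_h^{n+1}\|^2$, which is only controlled by the $A$-term at cost $1/(A\tau)$ --- contradicting the claim that the proposition holds for $A=0$. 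Relatedly, $\tau\lesssim\varepsilon^3$ is not used to protect the diffusion term (that is handled by the choice of $\eta_0$); it is used to absorb $\tfrac{11B^2}{\varepsilon}\tau\|\delta_{tt}\sigma_h^{n+1}\|_{-1}^2$ into the $\tfrac14\|\delta_{tt}\sigma_h^{n+1}\|_{-1}^2$ gained from (a).

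A second genuine gap is your Gronwall bookkeeping. You correctly notice that summing a per-step forcing of size $\gamma_1\tau^4$ over $N=T/\tau$ steps gives $\gamma_1 T\tau^3$, but your resolution (``since we are bounding $\mathcal E^{n+1}$ itself the $\tau^4$ scaling is preserved'') is not valid --- the recursion accumulates every step's forcing regardless of which quantity you read off at the end. The actual reason \eqref{newcn:cet3-2} retains $\tau^4$ is that the per-step constants $C_1^{n+1},C_2^{n+1}$ in \eqref{eq:C1}--\eqref{eq:C2} are integrals over $[t^{n-1},t^{n+1}]$, so $\sum_n C_1^{n+1}\tau^4=\tau^4\int_0^T(\cdots)\,\mathrm{d}t\lesssim\gamma_1(\varepsilon)\tau^4$ by Assumption \ref{lm:1}; the displayed form \eqref{newcn:cet3-1} with the global constant $\gamma_1$ is a lossy restatement of that sharper per-step bound. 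You need to carry the local-in-time truncation constants through the summation explicitly, as the paper does.
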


\begin{proof}
	Here, we can write the error function equations: 
	\begin{equation}\label{newcn:cet}
	\begin{split}
	\left(\frac{e^{n+1}- e^{n}}{\tau}, \psi_h　\right)
	=&-(\nabla(\mu_h^{n+\frac{1}{2}}-\mu^{n+\frac{1}{2}}),  \nabla \psi_h　)\\
	&+\left(\phi^{n+\frac{1}{2}}_{t}-\frac{\phi^{n+1}-\phi^n}{\tau}, \psi_h \right),  \forall \psi_h \in S_h,
	\end{split}
	\end{equation}
	\begin{equation}\label{newcn:cet1}
	\begin{split}
	(\mu^{n+\frac{1}{2}}_h-\mu^{n+\frac{1}{2}}, \varphi_h)=
	&\varepsilon\left(\nabla \frac{3e^{n+1}+e^{n-1}}{4}, \nabla \varphi_h \right)\\
	&+\varepsilon\left(\nabla\left( \frac{3\phi^{n+1}+\phi^{n-1}}{4}-\phi^{n+\frac{1}{2}}\right) , \nabla \varphi_h \right) \\
	&+\frac{1}{\varepsilon}\Big{(}f(\frac{3}{2}\phi_h^{n}-\frac{1}{2}\phi_h^{n-1})-f(\phi^{n+\frac{1}{2}}), \varphi_h
	\Big{)}\\
	&　+A\tau(\nabla \delta_{t}\phi_h^{n+1}, \nabla \varphi_h)
	+B(\delta_{tt}\phi_h^{n+1}, \varphi_h), \qquad \forall \varphi_h \in S_h.
	\end{split}
	\end{equation}	
	By using $\mu_h^{n+\frac{1}{2}}-\mu^{n+\frac{1}{2}}=\mu_h^{n+\frac{1}{2}}
	-R_h\mu^{n+\frac{1}{2}}+ R_h\mu^{n+\frac{1}{2}}
	-\mu^{n+\frac{1}{2}}$ and \eqref{eq:Rh1}, we get 
	\begin{equation}\label{newcn:cet1-1}
	\begin{split}
	-(\nabla(\mu_h^{n+\frac{1}{2}}-\mu^{n+\frac{1}{2}}),  \nabla \psi_h　)
	=&-(\nabla(\mu_h^{n+\frac{1}{2}}-R_h\mu^{n+\frac{1}{2}}+ R_h\mu^{n+\frac{1}{2}}
	-\mu^{n+\frac{1}{2}}),  \nabla \psi_h　)\\
	=&-(\nabla(\mu_h^{n+\frac{1}{2}}-R_h\mu^{n+\frac{1}{2}}),  \nabla \psi_h　)\\
	=&(\mu_h^{n+\frac{1}{2}}-R_h\mu^{n+\frac{1}{2}},  \Delta \psi_h　)\\
	=&(\mu_h^{n+\frac{1}{2}}-\mu^{n+\frac{1}{2}},  \Delta \psi_h　) +(\mu^{n+\frac{1}{2}}-R_h\mu^{n+\frac{1}{2}},  \Delta \psi_h　)
	\end{split}
	\end{equation}
	
	Combining \eqref{newcn:cet}-\eqref{newcn:cet1-1}, taking $\psi_h=-\Delta^{-1}\big{(} \frac{3\sigma_h^{n+1}+\sigma_h^{n-1}}{4}\big{)}$ and 
	$\varphi_h=-\big{(} \frac{3\sigma_h^{n+1}+\sigma_h^{n-1}}{4}\big{)}$, 
	and using $e^{n+1}=\rho^{n+1}+\sigma_{h}^{n+1}$, we get
	\begin{equation}\label{newcn:cet1-3}
	\begin{split}
	&-\left(\frac{\sigma_h^{n+1}- \sigma_h^{n}}{\tau}, \Delta^{-1}\left( \frac{3\sigma_h^{n+1}+\sigma_h^{n-1}}{4}\right)\right)
	+A\tau\left(\nabla \delta_{t}\sigma_h^{n+1}, \nabla \frac{3\sigma_h^{n+1}+\sigma_h^{n-1}}{4}  \right)\\
	&+\varepsilon \|\nabla \frac{3\sigma_h^{n+1}+\sigma_h^{n-1}}{4}\|^2\\
	= &
	-\varepsilon\left(\nabla \frac{3\rho^{n+1}+\rho^{n-1}}{4}, \nabla \frac{3\sigma_h^{n+1}+\sigma_h^{n-1}}{4}  \right)
	-B\left( \delta_{tt}\sigma_h^{n+1}, \frac{3\sigma_h^{n+1}+\sigma_h^{n-1}}{4}  \right)\\
	&-\frac{1}{\varepsilon}\left(f(\frac{3}{2}\phi_h^{n}-\frac{1}{2}\phi_h^{n-1})-f(\phi^{n+\frac{1}{2}}), \frac{3\sigma_h^{n+1}+\sigma_h^{n-1}}{4}  \right)\\
	&　
	-A\tau\left(\nabla \delta_{t}\rho^{n+1}, \nabla \frac{3\sigma_h^{n+1}+\sigma_h^{n-1}}{4}  \right)   
	-B\left(\delta_{tt}\rho^{n+1},  \frac{3\sigma_h^{n+1}+\sigma_h^{n-1}}{4}  \right)\\
	&+\left(\frac{\rho^{n+1}- \rho^{n}}{\tau}, \Delta^{-1}\left( \frac{3\sigma_h^{n+1}+\sigma_h^{n-1}}{4}\right)\right)
	-\left(\mu^{n+\frac{1}{2}}-R_h\mu^{n+\frac{1}{2}},  \frac{3\sigma_h^{n+1}+\sigma_h^{n-1}}{4} \right)\\
	&-\left(R_1^{n+1}, \Delta^{-1}\left( \frac{3\sigma_h^{n+1}+\sigma_h^{n-1}}{4}\right)\right)
	-A\left(\nabla R_2^{n+1}, \nabla \frac{3\sigma_h^{n+1}+\sigma_h^{n-1}}{4}  \right)\\
	&-B\left(R_3^{n+1},  \frac{3\sigma_h^{n+1}+\sigma_h^{n-1}}{4}  \right)
	-\varepsilon\left(\nabla R_4^{n+1}, \nabla \frac{3\sigma_h^{n+1}+\sigma_h^{n-1}}{4}  \right)\\
	=&:J_1 + J_2 + J'_3 +J_5 + J_6+ J_7 + J_8 +J_9+J_{10}+J_{11}+ J_{12}.
	\end{split}
	\end{equation}
	where
	$R_1^{n+1}=\phi^{n+\frac{1}{2}}_{t}-\frac{\phi^{n+1}-\phi^n}{\tau}$, 
	$R_2^{n+1} =\tau\delta_{t}\phi^{n+1} $,
	$R_3^{n+1} =\delta_{tt}\phi^{n+1} $,
	$R_4^{n+1}= \frac{3\phi^{n+1}+\phi^{n-1}}{4}-\phi^{n+\frac{1}{2}}$.
	For the left side, we have 
	\begin{equation}\label{eq:left:1}
	\begin{split}
	&-\left(\frac{\sigma_h^{n+1}- \sigma_h^{n}}{\tau}, \Delta^{-1} \frac{3\sigma_h^{n+1}+\sigma_h^{n-1}}{4}\right)\\
	=&\frac{1}{2\tau }(\|\sigma_h^{n+1}\|_{-1}^{2}-\|\sigma_h^{n}\|_{-1}^{2})
	+\frac{1}{8\tau }(\|\delta_t \sigma_h^{n+1}\|_{-1}^{2}-\|\delta_t \sigma_h^{n}\|_{-1}^{2})
	+\frac{1}{8\tau }\|\delta_{tt} \sigma_h^{n+1}\|_{-1}^{2},
	\end{split}
	\end{equation}
	
	\begin{equation}\label{eq:left:2}
	\begin{split}
	&A\tau\left(\nabla \delta_{t}\sigma_h^{n+1}, \nabla \frac{3\sigma_h^{n+1}+\sigma_h^{n-1}}{4}  \right)\\
	=&\frac{A\tau}{2}(\|\nabla \sigma_h^{n+1}\|^{2}-\|\nabla \sigma_h^{n}\|^{2})
	+\frac{A\tau}{8}(\|\nabla  \delta_t\sigma_h^{n+1}\|^{2}-\|\nabla \delta_t \sigma_h^{n}\|^{2})
	+\frac{A\tau}{8}\|\nabla \delta_{tt}\sigma_h^{n+1}\|^{2},
	\end{split}
	\end{equation}	
	For the right side, by using $(\nabla \rho, \nabla \psi_h)=0 $, $\forall \psi_h \in S_h$, we have
	\begin{equation}\label{eq:right:J1}
	\begin{split}
	J_1
	= -\varepsilon\left(\nabla \frac{3\rho^{n+1}+\rho^{n-1}}{4}, \nabla \frac{3\sigma_h^{n+1}+\sigma_h^{n-1}}{4}  \right)=0,
	\end{split}
	\end{equation}  
	and
	\begin{equation}\label{eq:right:J5}
	\begin{split}
	J_5
	= -A\tau\left(\nabla \delta_{t}\rho^{n+1}, \nabla \frac{3\sigma_h^{n+1}+\sigma_h^{n-1}}{4}  \right)
	=0,
	\end{split}
	\end{equation} 
	Then, we estimate the terms on the right hand side of \eqref{newcn:cet1-3}
	\begin{equation}\label{eq:right:J2}
	\begin{split}
	J_2
	=& -B\left( \delta_{tt}\sigma_h^{n+1}, \frac{3\sigma_h^{n+1}+\sigma_h^{n-1}}{4}  \right)\\
	\leq& \frac{B^2}{\eta_0}\|\delta_{tt}\sigma_h^{n+1}\|_{-1}^{2}+\frac{\eta_0}{4}\|\nabla \frac{3\sigma_h^{n+1}+\sigma_h^{n-1}}{4}\|^{2},
	\end{split}
	\end{equation}    
	
	\begin{equation}\label{eq:right:J3:1}
	\begin{split}
	J'_3
	=& -\frac{1}{\varepsilon}\left(f(\frac{3}{2}\phi_h^{n}-\frac{1}{2}\phi_h^{n-1})-f(\phi^{n+\frac{1}{2}}), \frac{3\sigma_h^{n+1}+\sigma_h^{n-1}}{4}  \right)\\
	\leq&\frac{L}{ \varepsilon}\left( |\frac{3}{2}\sigma_h^{n}-\frac{1}{2}\sigma_h^{n-1}+ \frac{3}{2}\rho^{n}-\frac{1}{2}\rho^{n-1} + R_5^{n+1}|,| \frac{3\sigma_h^{n+1}+\sigma_h^{n-1}}{4}|  \right)\\
	\leq& \frac{ L^2}{ \varepsilon^2 \eta_0}\|\frac{3}{2}\sigma_h^{n}-\frac{1}{2}\sigma_h^{n-1}\|_{-1}^{2}
	+\frac{ L^2}{ \varepsilon^2 \eta_0}\|\frac{3}{2}\rho^{n}-\frac{1}{2}\rho^{n-1}\|_{-1}^{2}
	+\frac{ L^2}{ \varepsilon^2 \eta_0}\|R_5^{n+1}\|_{-1}^{2}\\
	&+\frac{3\eta_0}{4}\|\nabla \frac{3\sigma_h^{n+1}+\sigma_h^{n-1}}{4}\|^{2},
	\end{split}
	\end{equation}   
	where
	\begin{equation} \label{newcn:e5-5}
	{R_{5}^{n+1}=}
	\frac{3}{2}\phi(t^{n})-\frac{1}{2}\phi(t^{n-1})-\phi(t^{n+\frac{1}{2}}).
	\end{equation}       		
	\begin{equation}\label{eq:right:J6}
	\begin{split}
	J_6
	=&-B\left( \delta_{tt}\rho^{n+1}, \frac{3\sigma_h^{n+1}+\sigma_h^{n-1}}{4}  \right)\\
	\leq& \frac{B^2 }{\eta_0}\| \delta_{tt}\rho^{n+1}\|_{-1}^{2}
	+\frac{\eta_0}{4}\|\nabla \frac{3\sigma_h^{n+1}+\sigma_h^{n-1}}{4}\|^{2},
	\end{split}
	\end{equation}   	
	\begin{equation}\label{eq:right:J7}
	\begin{split}
	J_7
	=&\left(\frac{\rho^{n+1}- \rho^{n}}{\tau}, \Delta^{-1}\left( \frac{3\sigma_h^{n+1}+\sigma_h^{n-1}}{4}\right)\right)\\
	\leq& \frac{1 }{\eta_0 }\|\Delta^{-1} \frac{\delta_{t}\rho^{n+1}}{\tau}\|_{-1}^{2}+\frac{\eta_0}{4}\|\nabla \frac{3\sigma_h^{n+1}+\sigma_h^{n-1}}{4}\|^{2},
	\end{split}
	\end{equation}       	
	\begin{equation}\label{eq:right:J8}
	\begin{split}
	J_8
	=&-\left(\mu^{n+\frac{1}{2}}-R_h\mu^{n+\frac{1}{2}},  \frac{3\sigma_h^{n+1}+\sigma_h^{n-1}}{4} \right)\\
	\leq& \frac{1 }{\eta_0 }\| \mu^{n+\frac{1}{2}}-R_h\mu^{n+\frac{1}{2}}\|_{-1}^{2}+\frac{\eta_0}{4}\|\nabla \frac{3\sigma_h^{n+1}+\sigma_h^{n-1}}{4}\|^{2},
	\end{split}
	\end{equation}      	
	\begin{equation}\label{eq:right:J9}
	\begin{split}
	J_9
	=&-\left(R_1^{n+1}, \Delta^{-1}\left( \frac{3\sigma_h^{n+1}+\sigma_h^{n-1}}{4}\right)\right)\\
	\leq& \frac{1 }{\eta_0 }\|\Delta^{-1} R_1^{n+1}\|_{-1}^{2}+\frac{\eta_0}{4}\|\nabla \frac{3\sigma_h^{n+1}+\sigma_h^{n-1}}{4}\|^{2},
	\end{split}
	\end{equation} 	
	\begin{equation}\label{eq:right:J10}
	\begin{split}
	J_{10}
	=&-A\left(\nabla R_2^{n+1}, \nabla \frac{3\sigma_h^{n+1}+\sigma_h^{n-1}}{4}  \right)\\
	\leq &\frac{A^2 }{\eta_0 }\|\nabla R_2^{n+1}\|^{2}+\frac{\eta_0}{4}\|\nabla \frac{3\sigma_h^{n+1}+\sigma_h^{n-1}}{4}\|^{2},
	\end{split}
	\end{equation} 	
	\begin{equation}\label{eq:right:J11}
	\begin{split}
	J_{11}
	=&-B\left(R_3^{n+1},  \frac{3\sigma_h^{n+1}+\sigma_h^{n-1}}{4}  \right)\\
	\leq& \frac{B^2 }{\eta_0 }\| R_3^{n+1}\|_{-1}^{2}+\frac{\eta_0}{4}\|\nabla \frac{3\sigma_h^{n+1}+\sigma_h^{n-1}}{4}\|^{2},
	\end{split}
	\end{equation}    	
	\begin{equation}\label{eq:right:J12}
	\begin{split}
	J_{12}
	=&    -\varepsilon\left(\nabla R_4^{n+1}, \nabla \frac{3\sigma_h^{n+1}+\sigma_h^{n-1}}{4}  \right)\\
	\leq &\frac{\varepsilon^2 }{\eta_0 }\|\nabla R_4^{n+1}\|^{2}+\frac{\eta_0}{4}\|\nabla \frac{3\sigma_h^{n+1}+\sigma_h^{n-1}}{4}\|^{2}.
	\end{split}
	\end{equation}
	
	Substituting \eqref{eq:left:1}-\eqref{eq:right:J12} into \eqref{newcn:cet1-3}, we have	
	\begin{equation}\label{newcn:cet2}
	\begin{split}
	&\frac{1}{2\tau }(\|\sigma_h^{n+1}\|_{-1}^{2}-\|\sigma_h^{n}\|_{-1}^{2})
	+\frac{1}{8\tau }(\|\delta_t \sigma_h^{n+1}\|_{-1}^{2}-\|\delta_t \sigma_h^{n}\|_{-1}^{2})
	+\frac{1}{8\tau }\|\delta_{tt} \sigma_h^{n+1}\|_{-1}^{2}\\
	&+\frac{A\tau}{2}(\|\nabla \sigma_h^{n+1}\|^{2}-\|\nabla \sigma_h^{n}\|^{2})
	+\frac{A\tau}{8}(\|\nabla  \delta_t\sigma_h^{n+1}\|^{2}-\|\nabla \delta_t \sigma_h^{n}\|^{2})\\
	&+\frac{A\tau}{8}\|\nabla \delta_{tt}\sigma_h^{n+1}\|^{2}
	+\varepsilon \|\nabla \frac{3\sigma_h^{n+1}+\sigma_h^{n-1}}{4}\|^2\\
	\leq & 
	\frac{B^2}{\eta_0}\|\delta_{tt}\sigma_h^{n+1}\|_{-1}^{2}
	+\frac{ L^2}{ \varepsilon^2 \eta_0}\|\frac{3}{2}\sigma_h^{n}-\frac{1}{2}\sigma_h^{n-1}\|_{-1}^{2}
	+\frac{11\eta_0}{4}\|\nabla \frac{3\sigma_h^{n+1}+\sigma_h^{n-1}}{4}\|^{2}\\
	&
	+\frac{ L^2}{ \varepsilon^2 \eta_0}\|\frac{3}{2}\rho^{n}-\frac{1}{2}\rho^{n-1}\|_{-1}^{2}
	+\frac{B^2 }{\eta_0}\| \delta_{tt}\rho^{n+1}\|_{-1}^{2}
	+\frac{1 }{\eta_0 }\|\Delta^{-1} \frac{\delta_{t}\rho^{n+1}}{\tau}\|_{-1}^{2}\\
	&+\frac{1 }{\eta_0 }\| \mu^{n+\frac{1}{2}}-R_h\mu^{n+\frac{1}{2}}\|_{-1}^{2}
	+\frac{1 }{\eta_0 }\|\Delta^{-1} R_1^{n+1}\|_{-1}^{2}
	+\frac{A^2 }{\eta_0 }\|\nabla R_2^{n+1}\|^{2}\\
	&+\frac{B^2 }{\eta_0 }\| R_3^{n+1}\|_{-1}^{2}
	+\frac{\varepsilon^2 }{\eta_0 }\|\nabla R_4^{n+1}\|^{2}
	+\frac{ L^2}{ \varepsilon^2 \eta_0}\|R_5^{n+1}\|_{-1}^{2}.
	\end{split}
	\end{equation}

	For the $R_1, \ldots, R_5$ terms, we have following estimates:
	\begin{equation} \label{newcn:R1}
	\|\Delta^{-1} R_{1}^{n+1}\|_{-1}^{2}  \lesssim
	\tau^{3}\int_{t^{n}}^{t^{n+1}}\|\Delta^{-1}\phi_{tt}\|_{-1}^{2}{\rm d}t,
	\end{equation}
	\begin{equation} \label{newcn:R2}
	\|\nabla R_{2}^{n+1}\|^{2}  \lesssim
	\tau^{3}\int_{t^{n}}^{t^{n+1}}\|\nabla \phi_t\|^{2}{\rm d}t,
	\end{equation}
	\begin{equation}\label{newcn:R3}
	\|R_{3}^{n+1}\|_{-1}^{2}  \lesssim 6\tau^{3}\int_{t^{n-1}}^{t^{n+1}}\|\phi_{tt}\|_{-1}^{2}{\rm d}t,
	\end{equation}
	\begin{equation}\label{newcn:R4}
	\|\nabla R_{4}^{n+1}\|^{2}   \lesssim
	\tau^{3}\int_{t^{n}}^{t^{n+1}}\| \nabla \phi_{tt}\|^{2}{\rm d}t,
	\end{equation}
	\begin{equation}\label{newcn:R5}
	\|R_{5}^{n+1}\|_{-1}^{2}  \lesssim \tau^{3}\int_{t^{n-1}}^{t^{n+1}}\|\phi_{tt}\|_{-1}^{2}{\rm d}t.
	\end{equation}   
	
	For including $\rho$ terms, using \eqref{eq:Rh3}-\eqref{eq:Rh4}, we have the following estimates:
	\begin{equation} \label{eq:rho2}
	\begin{split}
	\frac{ L^2}{ \varepsilon^2 \eta_0}\|\frac{3}{2}\rho^{n}-\frac{1}{2}\rho^{n-1}\|_{-1}^{2}
	\leq& \frac{ L^2}{ 2\varepsilon^2 \eta_0}(9\|\rho^{n}\|^2_{-1} + \|\rho^{n-1}\|^2_{-1})\\
	\lesssim  &\frac{ L^2}{ 2\varepsilon^2 \eta_0}h^{2(q+2)}(9\| \phi^{n}\|^2_{H^{q+1}} + \| \phi^{n-1}\|^2_{H^{q+1}}),
	\end{split}
	\end{equation}
	
	%\begin{equation} \label{eq:rho3}
	%\begin{split}
	%\frac{A^2 \tau^2}{\eta_0}\| \nabla \delta_t\rho^{n+1}\|^{2}
	%\lesssim & \frac{A^2 }{\eta_0}\tau^2 h^{2q}\|\delta_t \phi^{n+1}\|^2_{H^{q+1}}\\
	%\leq & \frac{A^2 }{\eta_0} \tau^{3}　h^{2q}\int_{t^{n}}^{t^{n+1}}\|\partial_{t} \phi(t)\|^2_{H^{q+1}}{\rm d}t,
	%\end{split}
	%\end{equation}
	
	\begin{equation} \label{eq:rho４}
	\begin{split}
	\frac{B^2 }{\eta_0}\| \delta_{tt}\rho^{n+1}\|_{-1}^{2}
	\lesssim & \frac{B^2 }{\eta_0} h^{2(q+2)}\|\delta_{tt} \phi^{n+1}\|^2_{H^{q+1}}\\
	\lesssim & \frac{6B^2 }{\eta_0} \tau^{3}　h^{2(q+2)}\int_{t^{n-1}}^{t^{n+1}}\| \phi_{tt}\|^2_{H^{q+1}}{\rm d}t,
	\end{split}
	\end{equation}
	
	\begin{equation} \label{eq:rho5}
	\begin{split}
	\frac{1 }{\eta_0 }\|\Delta^{-1} \frac{\delta_{t}\rho^{n+1}}{\tau}\|_{-1}^{2}
	\lesssim & \frac{1 }{\eta_0 \tau^2} h^{2(q+2)}\|\delta_t \phi^{n+1}\|^2_{H^{q+1}}\\
	\lesssim & \frac{1 }{\eta_0 \tau} 　h^{2(q+2)}\int_{t^{n}}^{t^{n+1}}\| \phi_t\|^2_{H^{q+1}}{\rm d}t,
	\end{split}
	\end{equation}
	
	\begin{equation} \label{eq:rho6}
	\begin{split}
	\frac{1 }{\eta_0 }\| \mu^{n+\frac{1}{2}}-R_h\mu^{n+\frac{1}{2}}\|_{-1}^{2}
	\lesssim & \frac{1 }{\eta_0 } h^{2(q+2)}\| \mu^{n+1}\|^2_{H^{q+1}}.
	\end{split}
	\end{equation}
	
	Multiplying \eqref{newcn:cet2} with $2\tau$, taking $\eta_0 = 2 \varepsilon/11$,
	and submitting \eqref{newcn:R1}-\eqref{newcn:R5}, \eqref{eq:rho2}-\eqref{eq:rho6} into \eqref{newcn:cet2}, we have
	\begin{equation}\label{newcn:cet3}
	\begin{split}
	&(\|\sigma_h^{n+1}\|_{-1}^{2}-\|\sigma_h^{n}\|_{-1}^{2})
	+\frac{1}{4}(\|\delta_t \sigma_h^{n+1}\|_{-1}^{2}-\|\delta_t \sigma_h^{n}\|_{-1}^{2})
	+\frac{1}{4 }\|\delta_{tt} \sigma_h^{n+1}\|_{-1}^{2}\\
	&+A\tau^2(\|\nabla \sigma_h^{n+1}\|^{2}-\|\nabla \sigma_h^{n}\|^{2})
	+\frac{A\tau^2}{4}(\|\nabla  \delta_t\sigma_h^{n+1}\|^{2}-\|\nabla \delta_t \sigma_h^{n}\|^{2})\\
	&+\frac{A\tau^2}{4}\|\nabla \delta_{tt}\sigma_h^{n+1}\|^{2}
	+\varepsilon \tau\|\nabla \frac{3\sigma_h^{n+1}+\sigma_h^{n-1}}{4}\|^2\\
	\leq & \frac{11 B^2 \tau}{ \varepsilon}\|\delta_{tt}\sigma_h^{n+1}\|^2_{-1}
	+\frac{99 L^2}{2\varepsilon^3}  \tau\|\sigma_h^{n}\|^2_{-1}
	+\frac{11 L^2}{2\varepsilon^3} \tau \|\sigma_h^{n-1}\|^2_{-1}\\
	&+C_1^{n+1} \tau^4 + C_2^{n+1} h^{2q+4},
	\end{split}
	\end{equation}  
	where 
	\begin{equation}\label{eq:C1}
	\begin{split}
	C_1^{n+1} =&\frac{11}{\varepsilon} \int_{t^{n}}^{t^{n+1}}  
	(\|\Delta^{-1}\phi_{tt}\|_{-1}^{2}+ A^2\|\nabla \phi_t\|^{2}
	+ \varepsilon^2 \| \nabla \phi_{tt}\|^{2}){\rm d}t\\
	&+\frac{11}{\varepsilon}\int_{t^{n-1}}^{t^{n+1}} (6B^2 \|\phi_{tt}\|_{-1}^{2}
	+\frac{L^2}{\varepsilon^2}\|\phi_{tt}\|_{-1}^{2}){\rm d}t,
	\end{split}
	\end{equation} 
	\begin{equation}\label{eq:C2}
	\begin{split}
	C_2^{n+1} =&\frac{11  L^2 }{2 \varepsilon^3}\tau(9 \| \phi^{n}\|^2_{H^{q+1}} 
	+ \| \phi^{n-1}\|^2_{H^{q+1}})
	+\frac{11  L^2 }{ \varepsilon}\tau \| \mu^{n+1}\|^2_{H^{q+1}} \\
	&+\frac{11 }{ \varepsilon}　\int_{t^{n}}^{t^{n+1}} 
	\| \phi_t\|^2_{H^{q+1}}{\rm d}t
	+\frac{66B^2 }{\varepsilon} \tau^{4}\int_{t^{n-1}}^{t^{n+1}}
	\| \phi_{tt}\|^2_{H^{q+1}}{\rm d}t.
	\end{split}
	\end{equation} 
	Suppose $\tau \lesssim \varepsilon^3$, 
	then　$\frac{11B^2}{\varepsilon}\tau \|\delta_{tt}\sigma_h^{n+1}\|_{-1}^{2} 
	\leq \frac{1}{4 } \|\delta_{tt}\sigma_h^{n+1}\|_{-1}^{2} $, we get \eqref{newcn:cet3-1}.
	Summing up \eqref{newcn:cet3} from $n=1$ to $n=N$, 
	by discrete Gronwall's inequality and assumption, we get \eqref{newcn:cet3-2}, where 
	\begin{equation}\label{eq:C1:0}
	\begin{split}
	C_1 =&\frac{11}{\varepsilon} \int_{0}^{T}  
	(\|\Delta^{-1}\phi_{tt}\|_{-1}^{2}+ A^2\|\nabla \phi_t\|^{2}
	+ \varepsilon^2 \| \nabla \phi_{tt}\|^{2}){\rm d}t\\
	&+\frac{22}{\varepsilon}\int_{0}^{T} (6B^2 \|\phi_{tt}\|_{-1}^{2}
	+\frac{L^2}{\varepsilon^2}\|\phi_{tt}\|_{-1}^{2}){\rm d}t\\
	\lesssim & \varepsilon^{-\max\{\rho_{1}+1, \rho_{2}+3, \rho_{3}-1, \rho_{5}+5\}}:=\gamma_1(\varepsilon),
	\end{split}
	\end{equation} 
	and 
	\begin{equation}\label{eq:C2:0}
	\begin{split}
	C_2 =&\frac{ 55L^2 \tau}{ \varepsilon^3} \sum_{n=1}^{N} 
	\| \phi^{n}\|^2_{H^{q+1}}
	+\frac{ 11\tau}{\varepsilon}\sum_{n=1}^{N} \| \mu^{n+1}\|^2_{H^{q+1}} \\
	&+\frac{11 }{ \varepsilon}　\int_{0}^{T}\|\phi_t\|^2_{H^{q+1}}{\rm d}t
	+\frac{132B^2 }{\varepsilon} \tau^{4}\int_{0}^{T}\|\phi_{tt}\|^2_{H^{q+1}}{\rm d}t\\
	\lesssim & \min\{\varepsilon^{-(\rho_{7}+3}), \varepsilon^{-(\rho_{8}+1)}, 
	\varepsilon^{-(\rho_6+1)}, \varepsilon^{-(\rho_4+3)}\tau^4\} :=\gamma_2(\varepsilon, \tau).
	\end{split}
	\end{equation} 	
\end{proof}

Proposition (\ref{newcn:prio}) is the usual error estimate, in
which the error growth depends on $T/\varepsilon^3$
exponentially. To obtain a finer estimate on the error, we
need to use a spectral estimate of the linearized
Cahn-Hilliard operator by Chen \cite{chen_spectrum_1994} for
the case when the interface is well developed in the
Cahn-Hilliard system.
\begin{lemma}\label{newcn:prop:01}
	Let $\phi(t)$ be the exact solution of the Cahn-Hilliard
	equation \eqref{eq:CH} with interfaces well developed
	in the initial condition (i.e. conditions (1.9)-(1.15) in
	\cite{chen_spectrum_1994} are satisfied).  Then there
	exist $0<\varepsilon_{0}\ll 1$ and positive constant
	$C_{0}$ such that the principle eigenvalue of the
	linearized Cahn-Hilliard operator
	$\mathcal{L}_{CH}:=\Delta(\varepsilon\Delta-\frac{1}{\varepsilon}
	f'(\phi)I)$ satisfies for all $t\in [0,T]$
	\begin{equation}\label{newcn:eigen}
	\lambda_{CH}=\inf_{\substack{0\neq v\in H^{1}(\Omega)\\ \Delta\omega=v}}
	\frac{\varepsilon\|\nabla v\|^{2}+\frac{1}{\varepsilon}(f'(\phi(\cdot,t))v,v)}{\|\nabla\omega\|^{2}}
	\geq-C_{0}
	\end{equation}
	for $\varepsilon\in (0,\varepsilon_{0})$. 
\end{lemma}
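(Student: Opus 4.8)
The statement is, modulo notation, exactly Chen's spectral estimate for the linearized Cahn--Hilliard operator, so my plan is to reduce it to \cite{chen_spectrum_1994} and record the two ingredients that make it work. Write $Q(v):=\varepsilon\|\nabla v\|^{2}+\frac{1}{\varepsilon}\big(f'(\phi(\cdot,t))v,v\big)$ for the numerator in \eqref{newcn:eigen} and recall that $\|\nabla\omega\|^{2}=\|v\|_{-1}^{2}$ whenever $\Delta\omega=v$. The first ingredient is the analogous $L^{2}$-estimate for the linearized Allen--Cahn operator: there is a constant $C_{1}$, independent of $\varepsilon$, with $Q(v)\ge -C_{1}\|v\|^{2}$ for all $v\in H^{1}(\Omega)$ and all $t\in[0,T]$. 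To establish it I would use that the interface is well developed: writing $d(x,t)$ for the signed distance to the zero level set of $\phi(\cdot,t)$, conditions (1.9)--(1.15) of \cite{chen_spectrum_1994} guarantee $\phi(x,t)=\theta(d(x,t)/\varepsilon)+O(\varepsilon)$ in a tubular neighbourhood of the interface, where $\theta$ is the one-dimensional optimal profile ($-\theta''+f(\theta)=0$, $\theta(\pm\infty)=\pm1$), together with uniform bounds on $d$, its derivatives and the interface curvature. The one-dimensional operator $\mathcal L_{0}:=-\partial_{zz}+f'(\theta(z))$ has lowest eigenvalue $0$ with eigenfunction $\theta'$ and a spectral gap above it, while away from the interface $f'(\phi)\ge c_{0}>0$; a partition of unity subordinate to the tube, with the curvature and $O(\varepsilon)$ remainders controlled by the geometric hypotheses, then delivers the Allen--Cahn bound.

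The second ingredient upgrades this $L^{2}$ bound to the $H^{-1}$ bound demanded by \eqref{newcn:eigen}. Following Chen, I would decompose $v=\beta\varrho+w$, where $\varrho:=\zeta(d/\varepsilon)\,\theta'(d/\varepsilon)$ is a cut-off copy of the near-kernel direction ($\zeta$ a smooth cut-off) and $\beta$ is chosen so that $w\perp\varrho$ in $L^{2}$. On the orthogonal complement the Allen--Cahn form is genuinely coercive, $Q(w)\ge c_{1}\varepsilon^{-1}\|w\|^{2}$; on the kernel direction the crucial computation --- the heart of \cite{chen_spectrum_1994} --- is that $Q(\varrho)$ is not merely small but vanishes to leading \emph{and} subleading order in $\varepsilon$, since the identity $\mathcal L_{0}\theta'=0$ removes the $O(1)$ term and the parity structure of $\theta$, $f'(\theta)$, $f''(\theta)$ and of the first profile correction removes the $O(\varepsilon)$ term, leaving $Q(\varrho)=O(\varepsilon^{2})$. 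Since also $\|\varrho\|_{-1}^{2}\sim\varepsilon^{2}$ (the rescaled function $(2\varepsilon)^{-1}\varrho$ approximates the surface measure of the interface, which lies in $H^{-1}$), the kernel-direction contribution is bounded below by $-C\|\varrho\|_{-1}^{2}$; absorbing the Cauchy--Schwarz cross terms between $\beta\varrho$ and $w$ into the coercive $w$-term and into $\|v\|_{-1}^{2}$ then yields $Q(v)\ge -C_{0}\|\nabla\omega\|^{2}$, which is \eqref{newcn:eigen}.

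The main obstacle is precisely this $H^{-1}$ bookkeeping: unlike the $L^{2}$ inner product, $\|\cdot\|_{-1}$ is nonlocal, so the near-interface decomposition does not diagonalise it, and one must track how much of $\|v\|_{-1}$ is carried by the $\varrho$-component and verify that the cross terms are harmless --- this is where the matching $\varepsilon^{2}$-scaling of $Q(\varrho)$ and of $\|\varrho\|_{-1}^{2}$ is delicate, and where the full strength of the well-developed-interface hypotheses (uniform control of the distance function, its derivatives, the interface curvature, and the size of $\phi-\theta(d/\varepsilon)$) is needed. A secondary point is to check that the exact solution of \eqref{eq:CH} keeps this well-developed structure for every $t\in[0,T]$, so that \cite{chen_spectrum_1994} applies uniformly in time; under the stated assumption on the initial data this follows from the matched-asymptotic analysis of the Cahn--Hilliard flow.
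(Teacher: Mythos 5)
The paper offers no proof of this lemma at all --- it is quoted verbatim as Chen's spectral estimate from \cite{chen_spectrum_1994}, so your reduction to that reference is exactly what the authors do. Your additional sketch of the internals of Chen's argument (Allen--Cahn $L^2$ bound via the 1D profile, decomposition along the near-kernel direction $\theta'(d/\varepsilon)$, and the matching $\varepsilon^2$ scalings of the quadratic form and the $H^{-1}$ norm on that direction) is a faithful account of how the cited result is obtained, and is consistent with the paper's use of the lemma.
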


The following lemma which was proved by 
\cite{feng_numerical_2005} and \cite{alikakos_convergence_1994},
shows that the boundedness of the solution to the Cahn-Hilliard equation, provided that the sharp interface limit Hele-Shaw problem has a global 
(in time) classical solution. This is a condition of the finer error estimate.  
\begin{lemma}\label{newcn:lm:Linf}
	Suppose that f satisfies Assumption \ref{newcn:ap:1}, and the
	corresponding Hele-Shaw problem has a global (in time)
	classical solution. Then there exists a family of smooth
	initial datum functions
	$\{\phi_{0}^{\varepsilon}\}_{0< \varepsilon \leq 1}$ and
	constants $ \varepsilon_{0} \in (0,1]$ and $C > 0$ such
	that for all $\varepsilon \in (0,\varepsilon_{0})$ the
	solution $\phi(t)$ of the Cahn-Hilliard equation(\ref{eq:CH}) 
	with the above initial data $\phi_{0}^{\varepsilon}$ satisfies
	\begin{equation}\label{newcn:eq:phi}
	\|\phi(t)\|_{L^{\infty}(0,T; \Omega) }\leq C.
	\end{equation}
\end{lemma}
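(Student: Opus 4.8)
The plan is to prove this by the matched‑asymptotic‑expansion construction standard in the sharp‑interface‑limit literature, combined with the spectral estimate of Lemma~\ref{newcn:prop:01}. The hypothesis is that the limiting Hele–Shaw problem has a global smooth solution; this solution furnishes, through a formal expansion in $\varepsilon$, an approximate solution $\phi_A^\varepsilon(x,t)=\phi_0(x,t)+\varepsilon\phi_1(x,t)+\cdots+\varepsilon^K\phi_K(x,t)$ of \eqref{eq:CH}, where in the outer region $\phi_0\equiv\pm1$ and, in an $O(\varepsilon)$ neighbourhood of the smoothly moving interface $\Gamma_t$, the leading inner profile is the standing wave $\theta(z)=\tanh(z/\sqrt2)$ written in the stretched variable $z=d(x,t)/\varepsilon$, with $d$ the signed distance to $\Gamma_t$. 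All correction terms $\phi_j$ are smooth and bounded uniformly in $\varepsilon$, so $\|\phi_A^\varepsilon\|_{L^\infty(0,T;\Omega)}\le C$ with $C$ independent of $\varepsilon$. One then takes the family of initial data to be $\phi_0^\varepsilon:=\phi_A^\varepsilon(\cdot,0)$, mollified so that it lies in the required smoothness class and the bounds $E_\varepsilon(\phi_0^\varepsilon)\lesssim\varepsilon^{-\sigma_1}$ etc. hold.

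Next I would set $\psi:=\phi-\phi_A^\varepsilon$, insert $\phi=\phi_A^\varepsilon+\psi$ into \eqref{eq:CH}, and subtract the equation approximately satisfied by $\phi_A^\varepsilon$, leaving a consistency residual $r^\varepsilon$ that is $O(\varepsilon^{K-m})$ in the relevant norm by construction. The linear part of the resulting evolution for $\psi$ is governed by the linearized operator $\mathcal L_{CH}=\Delta(\varepsilon\Delta-\tfrac1\varepsilon f'(\phi_A^\varepsilon)I)$. Testing the $\psi$‑equation against $-\Delta^{-1}\psi$ (so as to work in the $H^{-1}$ metric dictated by the gradient‑flow structure) and invoking the spectral lower bound of Lemma~\ref{newcn:prop:01}, namely $\varepsilon\|\nabla\psi\|^2+\tfrac1\varepsilon(f'(\phi_A^\varepsilon)\psi,\psi)\ge -C_0\|\psi\|_{-1}^2$, controls the quadratic part of the nonlinearity; the remaining cubic remainder, which for $f(\phi)=\phi^3-\phi$ equals $\tfrac1\varepsilon\psi^2(\psi+3\phi_A^\varepsilon)$, is absorbed using $\|\phi_A^\varepsilon\|_{L^\infty}\le C$, a Gagliardo–Nirenberg inequality, and a smallness bootstrap on $\|\psi\|$. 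A Gronwall argument then yields $\|\psi(t)\|_{-1}^2+\int_0^t\varepsilon\|\nabla\psi\|^2\,ds\le Ce^{C_0 t}\big(\|\psi(0)\|_{-1}^2+\varepsilon^{2(K-m)}\big)$, and choosing $K$ large makes the right‑hand side smaller than any prescribed power $\varepsilon^{2\ell}$.

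Finally I would upgrade this to an $L^\infty$ bound by bootstrapping regularity: differentiate the $\psi$‑equation, test against higher derivatives, and use the smallness of $\|\psi\|$ just obtained to handle the nonlinear terms, reaching $\|\psi(t)\|_{H^s}\le C\varepsilon^{\ell'}$ for $s$ large enough that $H^s(\Omega)\hookrightarrow L^\infty(\Omega)$ when $d\le3$; then $\|\psi(t)\|_{L^\infty}\le C\varepsilon^{\ell'}\le 1$ for $\varepsilon<\varepsilon_0$, whence $\|\phi(t)\|_{L^\infty}\le\|\phi_A^\varepsilon(t)\|_{L^\infty}+\|\psi(t)\|_{L^\infty}\le C$. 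The main obstacle is closing the energy estimate for $\psi$ uniformly in $\varepsilon$: the cubic nonlinearity carries a factor $1/\varepsilon$ while the diffusion carries only $\varepsilon$, so a naive estimate loses powers of $\varepsilon$ at every step; the resolution — the technical core of \cite{alikakos_convergence_1994,feng_numerical_2005} — is to run the estimates in the $H^{-1}$‑based norm where the spectral estimate of Lemma~\ref{newcn:prop:01} applies and to propagate the smallness of $\psi$ through the bootstrap so that the cubic term is genuinely lower order. For the full construction I will simply refer to \cite{alikakos_convergence_1994,feng_numerical_2005}.
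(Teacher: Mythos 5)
The paper offers no proof of this lemma at all: it is quoted verbatim as a known result of \cite{alikakos_convergence_1994} and \cite{feng_numerical_2005}, exactly as you do in your final sentence. Your sketch of the underlying argument (matched asymptotic construction of $\phi_A^\varepsilon$ from the global Hele--Shaw solution, $H^{-1}$-energy estimate for the remainder closed via the spectral bound of Lemma~\ref{newcn:prop:01}, then a bootstrap to $L^\infty$) is a faithful outline of what those references actually do, so your treatment is consistent with, and more informative than, the paper's.
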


Now we present the refined error estimate.
\begin{theorem}\label{newcn:error}
	Suppose all of the Assumption \ref{newcn:ap:1}(i),(ii), Assumption \ref{newcn:ap:2} 
	and Lemma \ref{lm:1}, \ref{newcn:lm:Linf} hold. Let time
	step $\tau$ satisfy the following constraint
	\begin{equation}\label{eq:condition1}
	\begin{split}
	\tau
	\lesssim \min\{\varepsilon^6, \varepsilon^{\frac{38+d}{18-d}}\gamma_3(\varepsilon)^{-\frac{4}{18-d}}\}.
	\end{split}
	\end{equation}  
	and
	\begin{equation}\label{eq:condition}
	\begin{split}
	h
	\lesssim \min\{ 
	\varepsilon^{\frac{7d-26}{8(q+2)}}\gamma_4(\varepsilon,\tau)^{-\frac{1}{2(q+2)}},
	\varepsilon^{\frac{2d+76}{(18-d)(q+2)}}
	\gamma_3(\varepsilon)^{-\frac{d-2}{2(18-d)}}
	\gamma_4(\varepsilon,\tau)^{-\frac{1}{2(q+2)}}\},
	\end{split}		
	\end{equation} 	
	then we have the error estimate
	\begin{equation}\label{eq:conclusion1}
	\begin{split}
	&\max_{1\leq n\leq N}\|e^{n+1}\|_{-1}^{2} \\
	\lesssim &
	\mathrm{exp }(5(C_{0}+L^{2})T) \left(\gamma_3(\varepsilon) 
	\tau^4 + \gamma_4(\varepsilon, \tau) h^{2q+4}\right) 
	+ \varepsilon^{-\rho_9}h^{2q+4} ,
	\end{split}
	\end{equation}
	
	\begin{equation}\label{eq:conclusion2}
	\begin{split}
	& \tau\sum_{n=1}^{N}\|\nabla\frac{3e^{n+1}+e^{n-1}}{4}\|^{2}
	\\
	\lesssim &
	\mathrm{exp }(5(C_{0}+L^{2})T) \left(\gamma_3(\varepsilon)\varepsilon^{-4} \tau^4 + \gamma_4(\varepsilon, \tau)\varepsilon^{-4} h^{2q+4}\right)
	+\varepsilon^{-\rho_7}h^{2q},
	\end{split}
	\end{equation}	
	where $\gamma_3(\varepsilon):=\varepsilon^{-\max\{\rho_{1}+4, \rho_{2}+6, \rho_{3}+2, \rho_{5}+8\}}$,\\
	$\gamma_4(\varepsilon, \tau):=\min\{\varepsilon^{-\max\{\rho_6+4, \rho_{7}+6, \rho_{8}+4)\}}, \varepsilon^{-(\rho_4+6)}\tau^4\}$.
\end{theorem}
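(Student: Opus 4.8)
The plan is to re-run the energy argument behind the coarse estimate of Proposition~\ref{newcn:prio}, but replace its crude treatment of the nonlinear term by Chen's spectral estimate (Lemma~\ref{newcn:prop:01}); this is exactly what turns the prefactor $\exp(55L^2T/\varepsilon^3)$ into $\exp(5(C_0+L^2)T)$. The whole argument is an induction on the time level $n$. The first ingredient is a pointwise bound: Proposition~\ref{newcn:prio}, which holds unconditionally, together with Assumption~\ref{ap:3}, an inverse inequality on $V_M$ and \eqref{eq:Rh3}, controls $\|e^k\|_{L^\infty}$ for $k\le n$; combining this with the uniform bound on $\|\phi(t)\|_{L^\infty}$ from Lemma~\ref{newcn:lm:Linf} gives $\|\phi_h^k\|_{L^\infty}\le C$ with $C$ independent of $\varepsilon,\tau,h$ \emph{provided} $\tau,h$ satisfy \eqref{eq:condition1}--\eqref{eq:condition} --- this is the sole reason those smallness conditions appear. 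These $L^\infty$ bounds are what license $|f'|\le L$ and $|f''|\le L_2$ at every intermediate point occurring in Taylor remainders.

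With this in hand I pair the error equations \eqref{newcn:cet}--\eqref{newcn:cet1} with $\psi_h=-\Delta^{-1}\bar\sigma^{n+1}$ and $\varphi_h=-\bar\sigma^{n+1}$, where $\bar\sigma^{n+1}:=\tfrac14(3\sigma_h^{n+1}+\sigma_h^{n-1})\in L_0^2$ (Lemma~\ref{lm:stab}), exactly as in the proof of Proposition~\ref{newcn:prio}. This produces on the left the telescoping $H^{-1}$ quantities $\tfrac1{2\tau}\delta_t\|\sigma_h^{n+1}\|_{-1}^2+\tfrac1{8\tau}\delta_t\|\delta_t\sigma_h^{n+1}\|_{-1}^2+\tfrac1{8\tau}\|\delta_{tt}\sigma_h^{n+1}\|_{-1}^2$, their $A$-weighted analogues, and the coercive elliptic term $\varepsilon\|\nabla\bar\sigma^{n+1}\|^2$ (using $(\nabla\bar\rho^{n+1},\nabla\bar\sigma^{n+1})=0$, with $\bar\rho^{n+1}:=\tfrac14(3\rho^{n+1}+\rho^{n-1})$); all the projection ($\rho$) and consistency ($R_1^{n+1},\dots,R_5^{n+1}$) contributions are estimated verbatim as in Proposition~\ref{newcn:prio} via \eqref{eq:Rh3}--\eqref{eq:Rh4} and \eqref{newcn:R1}--\eqref{newcn:R5}. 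The only term handled differently is the nonlinear one, $-\tfrac1\varepsilon\big(f(\tfrac32\phi_h^n-\tfrac12\phi_h^{n-1})-f(\phi^{n+\frac12}),\bar\sigma^{n+1}\big)$.

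The core of the proof is the treatment of that term. Writing $\tfrac32\phi_h^n-\tfrac12\phi_h^{n-1}-\phi^{n+\frac12}=(\tfrac32 e^n-\tfrac12 e^{n-1})+R_5^{n+1}$ with $R_5^{n+1}$ as in \eqref{newcn:e5-5}, using the identity $\tfrac32 e^n-\tfrac12 e^{n-1}=\bar e^{n+1}-\tfrac34\delta_{tt}e^{n+1}$ (where $\bar e^{n+1}:=\tfrac14(3e^{n+1}+e^{n-1})=\bar\rho^{n+1}+\bar\sigma^{n+1}$), and Taylor-expanding $f$ about $\phi(t^{n+\frac12})$, the nonlinear term splits into a leading part $-\tfrac1\varepsilon\big(f'(\phi(t^{n+\frac12}))\bar\sigma^{n+1},\bar\sigma^{n+1}\big)$ plus cross terms built from $\bar\rho^{n+1}$, $\delta_{tt}\sigma_h^{n+1}$, $\delta_{tt}\rho^{n+1}$, $R_5^{n+1}$ and the $f''$-quadratic remainder. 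The cross terms are all absorbed, using the $L^\infty$ bounds, \eqref{newcn:eq:ID:3}, Young's inequality, the discrete $H^{-1}$-dissipation $\tfrac1{8\tau}\|\delta_{tt}\sigma_h^{n+1}\|_{-1}^2$ on the left, and the constraint $\tau\lesssim\varepsilon^6$, at the price of a few extra powers of $1/\varepsilon$ that is exactly what separates the constants $\gamma_3,\gamma_4$ from $\gamma_1,\gamma_2$ of Proposition~\ref{newcn:prio}. For the leading part, since $\bar\sigma^{n+1}\in L_0^2$ we apply Lemma~\ref{newcn:prop:01} at $t=t^{n+\frac12}$: $\varepsilon\|\nabla\bar\sigma^{n+1}\|^2+\tfrac1\varepsilon\big(f'(\phi(t^{n+\frac12}))\bar\sigma^{n+1},\bar\sigma^{n+1}\big)\ge-C_0\|\bar\sigma^{n+1}\|_{-1}^2$, so $-\tfrac1\varepsilon\big(f'\bar\sigma^{n+1},\bar\sigma^{n+1}\big)\le\varepsilon\|\nabla\bar\sigma^{n+1}\|^2+C_0\|\bar\sigma^{n+1}\|_{-1}^2$; the gradient piece on the right is then cancelled by the coercive term $\varepsilon\|\nabla\bar\sigma^{n+1}\|^2$ on the left, and all that survives from the nonlinearity is $+(C_0+CL^2)\|\bar\sigma^{n+1}\|_{-1}^2$ --- a Gronwall constant of size $C_0+L^2$ instead of $L^2/\varepsilon^3$. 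It is here that the diffusive Crank--Nicolson structure is used: it is what makes this go through without the restrictive condition $B>L/(2\varepsilon)$ needed for the standard scheme, and indeed for $A=B=0$.

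Collecting terms, multiplying by $2\tau$, summing $n=1,\dots,N$, and invoking the discrete Gronwall inequality together with Assumption~\ref{ap:3}, I obtain $\max_n\|\sigma_h^{n+1}\|_{-1}^2+(\text{good left-hand terms})\lesssim\exp(5(C_0+L^2)T)\big(\gamma_3(\varepsilon)\tau^4+\gamma_4(\varepsilon,\tau)h^{2q+4}\big)$; adding $\|\rho^{n+1}\|_{-1}^2\lesssim\varepsilon^{-\rho_9}h^{2q+4}$ gives \eqref{eq:conclusion1}. For \eqref{eq:conclusion2} I feed this refined $H^{-1}$ bound back into the coarse inequality \eqref{newcn:cet3-1}, whose left side already carries $\varepsilon\tau\|\nabla\bar\sigma^{n+1}\|^2$ and whose right side carries $\varepsilon^{-3}\tau(\|\sigma_h^n\|_{-1}^2+\|\sigma_h^{n-1}\|_{-1}^2)$; summing in $n$ then produces $\tau\sum\|\nabla\bar\sigma^{n+1}\|^2\lesssim\varepsilon^{-4}\exp(5(C_0+L^2)T)(\gamma_3\tau^4+\gamma_4 h^{2q+4})+\varepsilon^{-\rho_7}h^{2q}$, which is \eqref{eq:conclusion2} after converting $\bar\sigma$ back to $\bar e$ via $\bar\rho$. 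The induction closes because, under \eqref{eq:condition1}--\eqref{eq:condition}, this right-hand side is small enough to re-derive the $L^\infty$ bound at level $n+1$. The main obstacle I expect is precisely the bookkeeping in the spectral-estimate step: arranging that the quadratic form $\varepsilon\|\nabla\bar\sigma^{n+1}\|^2+\tfrac1\varepsilon\big(f'(\phi(t^{n+\frac12}))\bar\sigma^{n+1},\bar\sigma^{n+1}\big)$ appears cleanly while every cross term --- the $\delta_{tt}\sigma_h$ and $\delta_{tt}\rho$ pieces, the $R_5^{n+1}$ consistency term, and the $f''$-quadratic remainder --- is shown to be genuinely lower order under the stated $\tau,h$ constraints, and tracking exactly how many powers of $1/\varepsilon$ these absorptions cost, which is what fixes the precise form of $\gamma_3,\gamma_4$.
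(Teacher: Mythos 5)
Your overall architecture --- re-running the coarse energy argument, isolating the leading linearized term $-\tfrac1\varepsilon\big(f'(\phi^{n+\frac12})\bar\sigma^{n+1},\bar\sigma^{n+1}\big)$ (in your notation $\bar\sigma^{n+1}=\tfrac14(3\sigma_h^{n+1}+\sigma_h^{n-1})$), and replacing the crude Lipschitz bound by Chen's spectral estimate to obtain the Gronwall constant $C_0+L^2$ --- matches the paper. But there is a genuine gap at the crux: the $f''$-quadratic remainder. After the Taylor expansion this contains $-\tfrac1{2\varepsilon}\big(f''(\theta)(\bar\sigma^{n+1})^2,\bar\sigma^{n+1}\big)$, which is \emph{cubic} in the unknown error and is bounded only by $\tfrac{3L_2}{2\varepsilon}\|\bar\sigma^{n+1}\|_{L^3}^3$. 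You file this under ``cross terms \dots\ absorbed using the $L^\infty$ bounds, Young's inequality \dots\ and the constraint $\tau\lesssim\varepsilon^6$,'' but an $O(1)$ bound $\|\phi_h^k\|_{L^\infty}\le C$ only yields $\|\bar\sigma^{n+1}\|_{L^3}^3\lesssim\|\bar\sigma^{n+1}\|^2\le\|\nabla\bar\sigma^{n+1}\|\,\|\bar\sigma^{n+1}\|_{-1}$, and absorbing $\tfrac{C}{\varepsilon}\|\nabla\bar\sigma^{n+1}\|\,\|\bar\sigma^{n+1}\|_{-1}$ into the residual coercivity --- which after the spectral estimate has consumed $(1-\eta_1)\varepsilon\|\nabla\bar\sigma^{n+1}\|^2$ is only $\eta_1\varepsilon\|\nabla\bar\sigma^{n+1}\|^2=\varepsilon^4\|\nabla\bar\sigma^{n+1}\|^2$ --- costs a Gronwall factor of order $\varepsilon^{-6}$, destroying the claimed prefactor $\mathrm{exp}(5(C_0+L^2)T)$. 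The paper instead interpolates $L^3$ between $L^2$ and $H^1$ to write $\tfrac{3L_2}{2\varepsilon}\|\bar\sigma^{n+1}\|_{L^3}^3=G^{n+1}\|\nabla\bar\sigma^{n+1}\|^2$ with $G^{n+1}=\tfrac{3L_2K}{2\varepsilon}\|\nabla\bar\sigma^{n+1}\|^{\frac{d-2}{4}}\|\bar\sigma^{n+1}\|_{-1}^{\frac{6-d}{4}}$, and then runs the induction to show $G^{N+1}\le\varepsilon^4/4$ by feeding the refined estimate at steps $\le N$ into the \emph{coarse} estimate \eqref{newcn:cet3-1} at step $N+1$.

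Relatedly, your diagnosis of where the hypotheses \eqref{eq:condition1}--\eqref{eq:condition} come from is incorrect: they are not there to secure an $L^\infty$ bound on $\phi_h$ via inverse inequalities (the paper never establishes or needs such a bound --- under Assumption \ref{newcn:ap:1}(ii) $f'$ and $f''$ are globally bounded, and Lemma \ref{newcn:lm:Linf} is invoked only for the exact solution, to bound $\|\tfrac14(3\rho^{n+1}+\rho^{n-1})\|_{\infty}$ and $\|R_4^{n+1}\|_{\infty}$). They are exactly the conditions under which $G^{N+1}\le\varepsilon^4/4$, obtained by solving \eqref{eq:error:9} and \eqref{eq:error:9-1}. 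Your closing paragraph concedes that this is ``the main obstacle,'' but it is not bookkeeping: it is the step that makes the theorem true with the stated prefactor, and your proposal does not contain the idea (the $L^3$--interpolation combined with an induction against the coarse estimate of Proposition \ref{newcn:prio}) needed to carry it out.
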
	
\begin{proof}
	(i) To get a better convergence result, we reestimate $J'_3$ as
	\begin{equation}\label{eq:right:J3:2}
	\begin{split}
	& J'_3\\
	=& -\frac{1}{\varepsilon}\left(f(\frac{3}{2}\phi_h^{n}-\frac{1}{2}\phi_h^{n-1})-f(\frac{3\phi_h^{n+1}+ \phi^{n-1}_h}{4}), \frac{3\sigma_h^{n+1}+\sigma_h^{n-1}}{4}  \right)\\
	&-\frac{1}{\varepsilon}\left(f(\frac{3\phi_h^{n+1}+ \phi^{n-1}_h}{4})-f(\phi^{n+\frac{1}{2}}), \frac{3\sigma_h^{n+1}+\sigma_h^{n-1}}{4}  \right)\\
	=&: J_3+J_4,
	\end{split}
	\end{equation}  
	
	\begin{equation}\label{eq:right:J3}
	\begin{split}
	J_3
	=& -\frac{1}{\varepsilon}\left(f(\frac{3}{2}\phi_h^{n}-\frac{1}{2}\phi_h^{n-1})-f(\frac{3\phi_h^{n+1}+ \phi^{n-1}_h}{4}), \frac{3\sigma_h^{n+1}+\sigma_h^{n-1}}{4}  \right)\\
	\leq &\frac{3L}{4 \varepsilon}\left( |\delta_{tt}\sigma_h^{n+1}+\delta_{tt}\rho^{n+1}+ R_3^{n+1}|,| \frac{3\sigma_h^{n+1}+\sigma_h^{n-1}}{4}|  \right)\\
	\leq& \frac{9 L^2}{16 \varepsilon^2 \eta}\|\delta_{tt}\sigma_h^{n+1}\|_{-1}^{2}+
	\frac{9 L^2}{16 \varepsilon^2 \eta}\|R_3^{n+1}\|_{-1}^{2}
	+\frac{9 L^2}{16 \varepsilon^2 \eta}\|\delta_{tt}\rho^{n+1}\|_{-1}^{2}\\
	&+\frac{3\eta}{4}\|\nabla \frac{3\sigma_h^{n+1}+\sigma_h^{n-1}}{4}\|^{2},
	\end{split}
	\end{equation}   
	
	\begin{equation}\label{eq:right:J4}
	\begin{split}
	J_4
	=& -\frac{1}{\varepsilon}\left(f(\frac{3\phi_h^{n+1}+ \phi^{n-1}_h}{4})-f(\phi^{n+\frac{1}{2}}), \frac{3\sigma_h^{n+1}+\sigma_h^{n-1}}{4}  \right)\\
	=&-\frac{1}{\varepsilon}\left( f'(\phi^{n+\frac{1}{2}})(\frac{3\sigma_h^{n+1}+\sigma_h^{n-1}}{4}   +
	\frac{3\rho^{n+1}+\rho^{n-1}}{4} +
	R_4^{n+1}), \frac{3\sigma_h^{n+1}+\sigma_h^{n-1}}{4} \right)\\
	&-\frac{1}{2\varepsilon}\left( f''(\theta^{n+\frac{1}{2}})(\frac{3\sigma_h^{n+1}+\sigma_h^{n-1}}{4}   +
	\frac{3\rho^{n+1}+\rho^{n-1}}{4} +
	R_4^{n+1})^2, \frac{3\sigma_h^{n+1}+\sigma_h^{n-1}}{4} \right)\\
	\leq &-\frac{1}{\varepsilon}\left( f'(\phi^{n+\frac{1}{2}})\frac{3\sigma_h^{n+1}+\sigma_h^{n-1}}{4}, \frac{3\sigma_h^{n+1}+\sigma_h^{n-1}}{4} \right)
	+\eta\|\nabla \frac{3\sigma_h^{n+1}+\sigma_h^{n-1}}{4}\|^{2}\\
	&
	+\frac{3L_2}{2\varepsilon}\|\frac{3\sigma_h^{n+1}+\sigma_h^{n-1}}{4}\|_{L^3}^3
	+ \frac{L^2}{\varepsilon^2 \eta} \|\frac{3\rho^{n+1}+\rho^{n-1}}{4}\|_{-1}^2
	+\frac{L^2}{\varepsilon^2 \eta} \|R_4^{n+1}\|_{-1}^2
	\\
	&+\frac{9L_2^2}{4\varepsilon^2\eta}\|\frac{3\rho^{n+1}+\rho^{n-1}}{4}\|_{\infty}^2\|\frac{3\rho^{n+1}+\rho^{n-1}}{4}\|_{-1}^2
	+\frac{9L_2^2}{4\varepsilon^2\eta}
	\|R_4^{n+1}\|^2_{\infty}\|R_4^{n+1}\|^2_{-1}.
	\end{split}
	\end{equation}  
	Replacing $\eta_0$ with $\eta$ and
	submitting \eqref{eq:left:1}-\eqref{eq:right:J2}, \eqref{eq:right:J5}-\eqref{eq:right:J12}, \eqref{eq:right:J3:2}-\eqref{eq:right:J4}
	into \eqref{newcn:cet1-3}, we get
	\begin{equation}\label{eq:error:1} 
	\begin{split}
	&\frac{1}{2\tau }(\|\sigma_h^{n+1}\|_{-1}^{2}-\|\sigma_h^{n}\|_{-1}^{2})
	+\frac{1}{8\tau }(\|\delta_t \sigma_h^{n+1}\|_{-1}^{2}-\|\delta_t \sigma_h^{n}\|_{-1}^{2})
	+\frac{1}{8\tau }\|\delta_{tt} \sigma_h^{n+1}\|_{-1}^{2}\\
	&+\frac{A\tau}{2}(\|\nabla \sigma_h^{n+1}\|^{2}-\|\nabla \sigma_h^{n}\|^{2})
	+\frac{A\tau}{8}(\|\nabla \delta_t\sigma_h^{n+1}\|^{2}
	-\|\nabla \delta_t\sigma_h^{n}\|^{2})\\
	&+\frac{A\tau}{8}\|\nabla \delta_{tt}\sigma_h^{n+1}\|^{2}
	+\varepsilon \|\nabla \frac{3\sigma_h^{n+1}+\sigma_h^{n-1}}{4}\|^2\\
	\leq & 
	-\frac{1}{\varepsilon}\left( f'(\phi^{n+\frac{1}{2}})\frac{3\sigma_h^{n+1}+\sigma_h^{n-1}}{4}, \frac{3\sigma_h^{n+1}+\sigma_h^{n-1}}{4} \right)
	+\frac{3L_2}{2\varepsilon}\|\frac{3\sigma_h^{n+1}+\sigma_h^{n-1}}{4}\|_{L^3}^3\\
	&+\left(\frac{B^2}{\eta} +\frac{9 L^2}{16 \varepsilon^2 \eta} 
	\right)
	\|\delta_{tt}\sigma_h^{n+1}\|_{-1}^{2}
	+\frac{15\eta}{4}\|\nabla \frac{3\sigma_h^{n+1}+\sigma_h^{n-1}}{4}\|^{2}\\
	&+\left(\frac{L^2}{\varepsilon^2 \eta} 
	+\frac{9L_2^2}{4\varepsilon^2\eta}\|\frac{3\rho^{n+1}+\rho^{n-1}}{4}\|_{\infty}^2\right)\|\frac{3\rho^{n+1}+\rho^{n-1}}{4}\|_{-1}^2
	\\
	&+\left(\frac{B^2}{\eta} +\frac{9 L^2}{16 \varepsilon^2 \eta} 
	\right) \| \delta_{tt}\rho^{n+1}\|_{-1}^{2}
	+\frac{1 }{\eta }\|\Delta^{-1} \frac{\delta_{t}\rho^{n+1}}{\tau}\|_{-1}^{2}
	+\frac{1 }{\eta}\| \mu^{n+\frac{1}{2}}-R_h\mu^{n+\frac{1}{2}}\|_{-1}^{2}\\
	&+\frac{1 }{\eta }\|\Delta^{-1} R_1^{n+1}\|_{-1}^{2}
	+\frac{A^2 }{\eta }\|\nabla R_2^{n+1}\|^{2}
	+\left(\frac{B^2}{\eta} +\frac{9 L^2}{16 \varepsilon^2 \eta} 
	\right)\| R_3^{n+1}\|_{-1}^{2}\\
	&
	+\frac{\varepsilon^2 }{\eta}\|\nabla R_4^{n+1}\|^{2}
	+\left(\frac{L^2}{\varepsilon^2 \eta} 
	+\frac{9L_2^2}{4\varepsilon^2\eta}
	\|R_4^{n+1}\|^2_{\infty}\right)\|R_4^{n+1}\|^2_{-1}.
	\end{split}
	\end{equation} 
	We need to bound the last two terms on the right hand side 
	of the above inequality.
	
	(ii) Now, we estimate the last two terms of the right hand side of \eqref{eq:error:1}. The spectrum estimate \eqref{newcn:eigen} leads to 
	\begin{equation}\label{eq:error:2} 
	\begin{split}
	&\varepsilon \|\nabla \frac{3\sigma_h^{n+1}+\sigma_h^{n-1}}{4}\|^2
	+\frac{1}{\varepsilon}\left( f'(\phi^{n+\frac{1}{2}})\frac{3\sigma_h^{n+1}+\sigma_h^{n-1}}{4}, \frac{3\sigma_h^{n+1}+\sigma_h^{n-1}}{4} \right)\\
	\geq &-C_0 \| \frac{3\sigma_h^{n+1}+\sigma_h^{n-1}}{4}\|^2_{-1},
	\end{split}
	\end{equation} 
	applying \eqref{eq:error:2} with a scaling factor $(1-\eta_1)$ 
	close to but smaller than 1, we get 
	\begin{equation}\label{eq:error:3} 
	\begin{split}
	&-(1-\eta_1)\frac{1}{\varepsilon}\left( f'(\phi^{n+\frac{1}{2}})\frac{3\sigma_h^{n+1}+\sigma_h^{n-1}}{4}, \frac{3\sigma_h^{n+1}+\sigma_h^{n-1}}{4} \right)\\
	\leq &C_0 (1-\eta_1)\| \frac{3\sigma_h^{n+1}+\sigma_h^{n-1}}{4}\|^2_{-1}
	+(1-\eta_1)\varepsilon \|\nabla \frac{3\sigma_h^{n+1}+\sigma_h^{n-1}}{4}\|^2.
	\end{split}
	\end{equation}
	On the other hand,
	\begin{equation}\label{eq:error:4} 
	\begin{split}
	&-\frac{\eta_1}{\varepsilon}\left( f'(\phi^{n+\frac{1}{2}})\frac{3\sigma_h^{n+1}+\sigma_h^{n-1}}{4}, \frac{3\sigma_h^{n+1}+\sigma_h^{n-1}}{4} \right)\\
	\leq &\frac{L^2 \eta_1}{\varepsilon^2 \eta_2}\| \frac{3\sigma_h^{n+1}+\sigma_h^{n-1}}{4}\|^2_{-1}
	+\frac{\eta_1 \eta_2}{4} \|\nabla \frac{3\sigma_h^{n+1}+\sigma_h^{n-1}}{4}\|^2.
	\end{split}
	\end{equation}
	
	Now, we estimate the $L^{3}$ term. By interpolating $L^{3}$ between $L^{2}$ and $H^{1}$ then using Poincare inequality for the error function, we get
	\[\|\frac{3\sigma_h^{n+1}+\sigma_h^{n-1}}{4}\|_{L^{3}}^{3}\leq K \|\nabla \frac{3\sigma_h^{n+1}+\sigma_h^{n-1}}{4}\|^{\frac{d}{2}}
	\|\frac{3\sigma_h^{n+1}+\sigma_h^{n-1}}{4}\|^{\frac{6-d}{2}},\]
	where K is a constant independent of $\varepsilon$ and $\tau$.	
	We continue the estimate by using
	$\|\frac{\sigma_h^{n+1}+ \sigma_h^{n}}{2}\|^{2} \leq \|\nabla \frac{\sigma_h^{n+1}+ \sigma_h^{n}}{2}\| \|\frac{\sigma_h^{n+1}+ \sigma_h^{n}}{2}\|_{-1}$ to get
	\begin{equation}\label{newcn:t20}
	\begin{split}
	\frac{3L_{2}}{2\varepsilon}\|\frac{3\sigma_h^{n+1}+\sigma_h^{n-1}}{4}\|_{L^{3}}^{3}
	\leq &\frac{3L_{2}}{2\varepsilon}K \|\nabla \frac{3\sigma_h^{n+1}+\sigma_h^{n-1}}{4}\|^{\frac{d}{2}+\frac{6-d}{4}}
	\|\frac{3\sigma_h^{n+1}+\sigma_h^{n-1}}{4}\|_{-1}^{\frac{6-d}{4}}\\
	:=&G^{n+1}\|\nabla \frac{3\sigma_h^{n+1}+\sigma_h^{n-1}}{4}\|^{2},
	\end{split}
	\end{equation}
	where $G^{n+1}=\frac{3L_{2}}{2\varepsilon}K \|\nabla \frac{3\sigma_h^{n+1}+\sigma_h^{n-1}}{4}\|^{\frac{d-2}{4}}
	\|\frac{3\sigma_h^{n+1}+\sigma_h^{n-1}}{4}\|_{-1}^{\frac{6-d}{4}}$. 
	
	Now plugging equation \eqref{eq:error:3}, \eqref{eq:error:4} and (\ref{newcn:t20}) into \eqref{eq:error:1}, we get
	\begin{equation}\label{eq:error:5} 
	\begin{split}
	&\frac{1}{2\tau }(\|\sigma_h^{n+1}\|_{-1}^{2}-\|\sigma_h^{n}\|_{-1}^{2})
	+\frac{1}{8\tau }(\|\delta_t \sigma_h^{n+1}\|_{-1}^{2}-\|\delta_t \sigma_h^{n}\|_{-1}^{2})
	+\frac{1}{8\tau }\|\delta_{tt} \sigma_h^{n+1}\|_{-1}^{2}\\
	&+\frac{A\tau}{2}(\|\nabla \sigma_h^{n+1}\|^{2}-\|\nabla \sigma_h^{n}\|^{2})
	+\frac{A\tau}{8}(\|\nabla \delta_t\sigma_h^{n+1}\|^{2}
	-\|\nabla \delta_t\sigma_h^{n}\|^{2})\\
	&+\frac{A\tau}{8}\|\nabla \delta_{tt}\sigma_h^{n+1}\|^{2}
	+\eta_1\varepsilon \|\nabla \frac{3\sigma_h^{n+1}+\sigma_h^{n-1}}{4}\|^2\\
	\leq & 
	\left(C_0 (1-\eta_1) +\frac{L^2 \eta_1}{\varepsilon^2 \eta_2}\right)\| \frac{3\sigma_h^{n+1}+\sigma_h^{n-1}}{4}\|^2_{-1}
	+\left(\frac{15\eta}{4}+ \frac{\eta_1 \eta_2}{4}\right) \|\nabla \frac{3\sigma_h^{n+1}+\sigma_h^{n-1}}{4}\|^2\\
	& + G^{n+1} \|\nabla \frac{3\sigma_h^{n+1}+\sigma_h^{n-1}}{4}\|^2
	+\left(\frac{B^2}{\eta} +\frac{9 L^2}{16 \varepsilon^2 \eta} 
	\right)
	\|\delta_{tt}\sigma_h^{n+1}\|_{-1}^{2}\\
	&+\left(\frac{L^2}{\varepsilon^2 \eta} 
	+\frac{9L_2^2}{4\varepsilon^2\eta}\|\frac{3\rho^{n+1}
		+\rho^{n-1}}{4}\|_{\infty}^2\right)\|\frac{3\rho^{n+1}+\rho^{n-1}}{4}\|_{-1}^2 \\
	&+\left(\frac{B^2}{\eta} +\frac{9 L^2}{16 \varepsilon^2 \eta} 
	\right) \| \delta_{tt}\rho^{n+1}\|_{-1}^{2}
	+\frac{1 }{\eta }\|\Delta^{-1} \frac{\delta_{t}\rho^{n+1}}{\tau}\|_{-1}^{2}
	+\frac{1 }{\eta}\| \mu^{n+\frac{1}{2}}-R_h\mu^{n+\frac{1}{2}}\|_{-1}^{2}\\
	&+\frac{1 }{\eta }\|\Delta^{-1} R_1^{n+1}\|_{-1}^{2}
	+\frac{A^2 }{\eta }\|\nabla R_2^{n+1}\|^{2}
	+\left(\frac{B^2}{\eta} +\frac{9 L^2}{16 \varepsilon^2 \eta} 
	\right)\| R_3^{n+1}\|_{-1}^{2}\\
	&
	+\frac{\varepsilon^2 }{\eta}\|\nabla R_4^{n+1}\|^{2}
	+\left(\frac{L^2}{\varepsilon^2 \eta} 
	+\frac{9L_2^2}{4\varepsilon^2\eta}
	\|R_4^{n+1}\|^2_{\infty}\right)\|R_4^{n+1}\|^2_{-1}.
	\end{split}
	\end{equation} 
	Take $\eta_{1}=\varepsilon^{3}, \ \eta_{2}=\varepsilon,\ 
	\eta=\varepsilon^{4}/{15}$,
	such that
	\[ \frac{L^{2}\eta_1}{\varepsilon^{2}\eta_{2}}=L^{2}, \ \
	\frac{15\eta}{4} +\frac{\eta_{1}\eta_{2}}{4}
	= \frac{\varepsilon^{4}}{2},\]
	and take 
	\begin{equation}
	\tau \leq 
	\frac{1}{\frac{8B^{2}}{\eta}+\frac{9L^2}{2\varepsilon^2\eta}} 
	\lesssim \varepsilon^6,
	\end{equation}
	such that
	\begin{equation} \label{eq:error:6}
	\Big(\frac{B^{2}}{\eta}+\frac{9L^2}{16\varepsilon^2\eta}
	\Big)\|\delta_{t}\sigma_h^{n+1}\|^{2}_{-1}
	\leq
	\frac{1}{8\tau} \|\delta_{t}\sigma_h^{n+1}\|^{2}_{-1}.
	\end{equation}	
	By using \eqref{eq:error:6} and the taken values, multiplying $4\tau$ on both sides of inequality \eqref{eq:error:5}, we get
	\begin{equation}\label{eq:error:7} 
	\begin{split}
	&2(\|\sigma_h^{n+1}\|_{-1}^{2}-\|\sigma_h^{n}\|_{-1}^{2})
	+\frac{1}{2 }(\|\delta_t \sigma_h^{n+1}\|_{-1}^{2}-\|\delta_t \sigma_h^{n}\|_{-1}^{2})\\
	&+2A\tau^2(\|\nabla \sigma_h^{n+1}\|^{2}-\|\nabla \sigma_h^{n}\|^{2})
	+\frac{A\tau^2}{2}(\|\nabla \delta_t\sigma_h^{n+1}\|^{2}
	-\|\nabla \delta_t\sigma_h^{n}\|^{2})\\
	&+\frac{A\tau^2}{2}\|\nabla \delta_{tt}\sigma_h^{n+1}\|^{2}
	+2\varepsilon^4 \tau \|\nabla \frac{3\sigma_h^{n+1}+\sigma_h^{n-1}}{4}\|^2\\
	\leq & 
	\left(C_0 +L^2\right)\tau
	\left( \frac{9}{2}\|\sigma_h^{n+1}\|_{-1}^2 + \frac{1}{2}\|\sigma_h^{n-1}\|_{-1}^2 \right)
	+ 4G^{n+1}\tau \|\nabla \frac{3\sigma_h^{n+1}+\sigma_h^{n-1}}{4}\|^2\\
	&+ C_3^{n+1} \tau^4 + C_4^{n+1} h^{2q+4}.
	\end{split}
	\end{equation}   
	By using 
	\begin{equation}\label{newcn:R4-1}
	\|R_{4}^{n+1}\|^{2}_{-1}   \lesssim
	\tau^{3}\int_{t^{n}}^{t^{n+1}}\| \phi_{tt}\|^{2}_{-1}{\rm d}t,
	\end{equation}
	and $\|R_4^{n+1}\|^2_{\infty} \leq 8 C^2$,
	we have
	\begin{equation}\label{eq:C3}
	\begin{split}
	C_3^{n+1} =&\frac{60}{\varepsilon^4} 
	\int_{t^{n}}^{t^{n+1}}  \|\Delta^{-1}\phi_{tt}\|_{-1}^{2}
	+ A^2\|\nabla \phi_t\|^{2}
	+ \varepsilon^2 \| \nabla \phi_{tt}\|^{2}{\rm d}t\\
	&+\frac{60}{\varepsilon^4}\int_{t^{n}}^{t^{n+1}} \left(\frac{L^2}{\varepsilon^2 } 
	+\frac{18L_2^2 C^2}{\varepsilon^2}\right)
	\|\phi_{tt}\|_{-1}^{2}{\rm d}t\\
	&+\frac{60}{\varepsilon^4}\int_{t^{n-1}}^{t^{n+1}} 
	\left(6B^2+ \frac{27L^2}{8 \varepsilon^2}\right) \|\phi_{tt}\|_{-1}^{2}{\rm d}t.
	\end{split}
	\end{equation}
	%\begin{equation}\label{eq:C4}
	%\begin{split}
	%C_4^{n+1} /4\tau= & \frac{\varepsilon^2}{8\eta}h^{2q}(9\| \phi^{n+1}\|^2_{H^{q+1}} + \|\phi^{n-1}\|^2_{H^{q+1}})\\
	%&+6\left(\frac{B^2}{\eta} +\frac{9 L^2}{16 \varepsilon^2 \eta} 
	%\right)\tau^{3}　h^{2(q+2)}\int_{t^{n-1}}^{t^{n+1}}\|\partial_{tt} \phi(t)\|^2_{H^{q+1}}{\rm d}t\\
	%&+\frac{1 }{\eta　\tau} 　h^{2(q+2)}\int_{t^{n}}^{t^{n+1}}\|\partial_{t} \phi(t)\|^2_{H^{q+1}}{\rm d}t
	%+\frac{1 }{\eta_0 } h^{2(q+2)}\| \mu^{n+1}\|^2_{H^{q+1}}\\
	%&+\frac{A^2 }{\eta} \tau^{3}　h^{2q}\int_{t^{n}}^{t^{n+1}}\|\partial_{t} \phi(t)\|^2_{H^{q+1}}{\rm d}t\\
	%&+\left(\frac{L^2}{\varepsilon^2 \eta} 
	%+\frac{9L_2^2}{2\varepsilon^2\eta}\right)
	%h^{2q} \left(\frac{9}{8}\|\phi^{n+1}\|_{H^{q+1}}^2 + \frac{1}{8}\|\phi^{n-1}\|_{H^{q+1}}^2 \right),
	%\end{split}
	%\end{equation} 
	On the other hand,
	\begin{equation}\label{eq:C4}
	\begin{split}
	C_4^{n+1} = &\frac{15}{4\varepsilon^6} \left(2L^2+9L^2_2C^2 \right)
	\tau (9\| \phi^{n+1}\|^2_{H^{q+1}} + \|\phi^{n-1}\|^2_{H^{q+1}})\\
	&+\frac{15}{\varepsilon^4}\left(24B^2 +\frac{27 L^2}{2 \varepsilon^2 } 
	\right)\tau^{4}　\int_{t^{n-1}}^{t^{n+1}}\|\phi_{tt}\|^2_{H^{q+1}}{\rm d}t\\
	&+\frac{60 }{\varepsilon^4} \int_{t^{n}}^{t^{n+1}}\|\phi_t\|^2_{H^{q+1}}{\rm d}t
	+\frac{60 }{\varepsilon^4 }\tau\| \mu^{n+1}\|^2_{H^{q+1}},
	\end{split}
	\end{equation} 	
	where
	\begin{equation}
	\begin{split}
	\|\frac{3\rho^{n+1}+\rho^{n-1}}{4}\|_{\infty}^2
	&\leq \frac{9}{8}\|\rho^{n+1}\|_{\infty}^2 + \frac{1}{8}\|\rho^{n-1}\|_{\infty}^2\\
	&\leq \frac{9}{8}\|\phi^{n+1}\|_{\infty}^2 + \frac{1}{8}\|\phi^{n-1}\|_{\infty}^2
	\leq2C^2,
	\end{split}
	\end{equation}	
	\begin{equation}
	\begin{split}
	\|\frac{3\rho^{n+1}+\rho^{n-1}}{4}\|_{-1}^2
	&\leq \frac{9}{8}\|\rho^{n+1}\|_{-1}^2 + \frac{1}{8}\|\rho^{n-1}\|_{-1}^2\\
	&\leq h^{2(q+2)} \left(\frac{9}{8}\|\phi^{n+1}\|_{H^{q+1}}^2 + \frac{1}{8}\|\phi^{n-1}\|_{H^{q+1}}^2 \right).
	\end{split}
	\end{equation}	
	
	Now, if $G^{n+1}$ is uniformly bounded by constant
	$\varepsilon^{4}/4$, we can sum up the inequality (\ref{eq:error:7}) for $n=1$ to
	$N$ to get the following estimate:
	\begin{equation}\label{eq:error:８} 
	\begin{split}
	&2(\|\sigma_h^{N+1}\|_{-1}^{2}-\|\sigma_h^{1}\|_{-1}^{2})
	+\frac{1}{2 }(\|\delta_t \sigma_h^{N+1}\|_{-1}^{2}-\|\delta_t \sigma_h^{1}\|_{-1}^{2})\\
	&+2A\tau^2(\|\nabla \sigma_h^{N+1}\|^{2}-\|\nabla \sigma_h^{1}\|^{2})
	+\frac{A\tau^2}{2}(\|\nabla\delta_t \sigma_h^{N+1}\|^{2}-\|\nabla \delta_t \sigma_h^{1}\|^{2})\\
	&+\frac{A\tau^2}{2}\sum_{n=1}^{N}\|\nabla \delta_{tt}\sigma_h^{n+1}\|^{2}
	+\varepsilon^4 \tau\sum_{n=1}^{N} \|\nabla \frac{3\sigma_h^{n+1}+\sigma_h^{n-1}}{4}\|^2\\
	\leq & 
	\frac{9}{2}\left(C_0 +L^2\right)\tau\|\sigma_h^{N+1}\|_{-1}^2 
	+5\left(C_0 +L^2\right)\tau\sum_{n=1}^{N}\|\sigma_h^{n}\|_{-1}^2\\
	&+ C_3 \tau^4 + C_4 h^{2q+4},
	\end{split}
	\end{equation} 
	where
	\begin{equation}\label{eq:C3:0}
	\begin{split}
	C_3 =&\frac{60}{\varepsilon^4} \int_{0}^{T}  \|\Delta^{-1}\phi_{tt}\|_{-1}^{2}
	+ A^2\|\nabla \phi_t\|^{2}
	+ \varepsilon^2 \|\nabla \phi_{tt}\|^{2}{\rm d}t\\
	&+\frac{15}{\varepsilon^6}\int_{0}^{T}
	\left(31L^2+72L_2^2 C^2 + 48B^2 \varepsilon^2 \right)
	\|\partial_{tt}\phi(t)\|_{-1}^{2}{\rm d}t\\
	\lesssim & \varepsilon^{-\max\{\rho_{1}+4, \rho_{2}+6, \rho_{3}+2, \rho_{5}+8\}}:=\gamma_3(\varepsilon),
	\end{split}
	\end{equation} 
	and
	\begin{equation}\label{eq:C4:0}
	\begin{split}
	C_4 = &\frac{75}{2\varepsilon^6} \left(2L^2+9L^2_2\right)
	\tau \sum_{n=1}^{N+1}\| \phi^{n}\|^2_{H^{q+1}}
	+\frac{60 }{\varepsilon^4 }\tau \sum_{n=1}^{N+1}\| \mu^{n}\|^2_{H^{q+1}}\\
	&+\frac{15}{\varepsilon^6}\left(48B^2 \varepsilon^2 +27 L^2
	\right)\tau^{4}\int_{0}^{T}\| \phi_{tt}\|^2_{H^{q+1}}{\rm d}t
	+\frac{60 }{\varepsilon^4}\int_{0}^{T}\| \phi_t\|^2_{H^{q+1}}{\rm d}t\\
	\lesssim & \min\{\varepsilon^{-(\rho_6+4)},\varepsilon^{-(\rho_{7}+6)}, \varepsilon^{-(\rho_{8}+4)}, 
	\varepsilon^{-(\rho_4+6)}\tau^4\} :=\gamma_4(\varepsilon, \tau).
	\end{split}
	\end{equation} 	
	Choose $\tau \leq 2/{9(C_{0}+L^{2})}$, then we can get a
	finer error estimate by discrete Gronwall's inequality
	and the assumption of first step error (\ref{eq:AP:phisigma}):
	\begin{equation}\label{newcn:t14}
	\begin{split}
	&\max_{1\leq n\leq N}\left(
	\|\sigma_h^{n+1}\|_{-1}^{2}
	+2A\tau^2\|\nabla \sigma_h^{n+1}\|^{2}
	+\frac{1}{2}\|\delta_{t}\sigma_h^{n+1}\|^{2}_{-1}
	+\frac{A\tau^2}{2}\|\nabla \delta_t\sigma_h^{n+1}\|^{2}\right)\\
	&+\frac{A\tau^2}{2}\sum_{n=1}^{N}\|\nabla \delta_{tt}\sigma_h^{n+1}\|^{2}
	+\varepsilon^4 \tau\sum_{n=1}^{N}\|\nabla\frac{3\sigma_h^{n+1}+\sigma_h^{n-1}}{4}\|^{2}
	\\
	\lesssim &
	\mathrm{exp }(5(C_{0}+L^{2})T) \left(\gamma_3(\varepsilon) \tau^4 + \gamma_4(\varepsilon, \tau) h^{2q+4}\right) +\varepsilon^{-\tilde{\sigma}_1}\tau^4
	+ \varepsilon^{-\tilde{\sigma}_2}h^{2q+4}.\\
	\end{split}
	\end{equation}
	
	We prove this by induction. Assuming that
	the above estimate holds for all first $N$ time steps.
	Since $\tau \lesssim \varepsilon^{6}$,
	then the coarse estimate \eqref{newcn:cet3-1} leads to
	\begin{equation}\label{eq:error:8}
	\begin{split}
	&\|\sigma_h^{N+1}\|_{-1}^{2}
	+\frac{1}{4}\|\delta_t \sigma_h^{N+1}\|_{-1}^{2}
	+A\tau^2\|\nabla \sigma_h^{N+1}\|^{2}
	+\frac{A\tau^2}{2}\|\nabla \delta_t\sigma_h^{N+1}\|^{2}\\
	&+\frac{A\tau^2}{2}\|\nabla \delta_{tt}\sigma_h^{N+1}\|^{2}
	+\varepsilon \tau \|\nabla \frac{3\sigma_h^{N+1}+\sigma_h^{N-1}}{4}\|^2\\
	\leq & \|\sigma_h^{N}\|_{-1}^{2}　
	+\frac{1}{4}\|\delta_t \sigma_h^{N}\|_{-1}^{2}
	+A\tau^2\|\nabla \sigma_h^{N}\|^{2}
	+\frac{A\tau^2}{2}\|\nabla \delta_t\sigma_h^{N}\|^{2}\\
	&+\frac{99 L^2}{2\varepsilon^3}  \tau\|\sigma_h^{N}\|^2_{-1}
	+\frac{11 L^2}{2\varepsilon^3} \tau \|\sigma_h^{N-1}\|^2_{-1}
	+\gamma_1(\varepsilon) \tau^4 + \gamma_2(\varepsilon, \tau) h^{2q+4}\\
	\lesssim&	\mathrm{exp }(5(C_{0}+L^{2})T) \left(\gamma_3(\varepsilon) \tau^4 
	+ \gamma_4(\varepsilon, \tau) h^{2q+4}\right)
	+\varepsilon^{-\tilde{\sigma}_1}\tau^4+ \varepsilon^{-\tilde{\sigma}_2}h^{2q+4}\\
	&+\gamma_1(\varepsilon) \tau^4 + \gamma_2(\varepsilon, \tau) h^{2q+4},
	\end{split}
	\end{equation} 	
	To obtain $G^{N+1}\leq \varepsilon^{4}/4$, by using \eqref{eq:error:8}, 
	$\varepsilon^{-\tilde{\sigma}_1} \leq \gamma_1(\varepsilon) \leq \gamma_3(\varepsilon)$
	and $\varepsilon^{-\tilde{\sigma}_2} \leq \gamma_2(\varepsilon,\tau) \leq \gamma_4(\varepsilon,\tau)$,
	we easily get
	\begin{equation}\label{eq:error:9}
	\begin{split}
	G^{N+1}=&\frac{3L_{2}}{2\varepsilon}K \|\nabla \frac{3\sigma_h^{N+1}+\sigma_h^{N}}{4}\|^{\frac{d-2}{4}}
	\|\frac{3\sigma_h^{N+1}+\sigma_h^{N}}{4}\|_{-1}^{\frac{6-d}{4}}\\
	\lesssim&\frac{3L_{2}}{2\varepsilon}K \left(
	\gamma_3(\varepsilon)\varepsilon^{-1}\tau^{3}\right)^{\frac{d-2}{8}} 
	\left(\gamma_3(\varepsilon)\tau^{4}\right)^{\frac{6-d}{8}}
	\leq \dfrac{\varepsilon^{4}}{4},
	\end{split}
	\end{equation}
	and
	\begin{equation}\label{eq:error:9-1}
	\begin{split}
	G^{N+1}=&\frac{3L_{2}}{2\varepsilon}K \|\nabla \frac{3\sigma_h^{N+1}+\sigma_h^{N}}{4}\|^{\frac{d-2}{4}}
	\|\frac{3\sigma_h^{N+1}+\sigma_h^{N}}{4}\|_{-1}^{\frac{6-d}{4}}\\
	\lesssim&\frac{3L_{2}}{2\varepsilon}K \left(
	\gamma_4(\varepsilon, \tau)h^{2q+4}\varepsilon^{-1}\tau^{-1}\right)^{\frac{d-2}{8}} 
	\left(\gamma_4(\varepsilon, \tau)h^{2q+4}\right)^{\frac{6-d}{8}}
	\leq \dfrac{\varepsilon^{4}}{4}.
	\end{split}	
	\end{equation}		
	Solving \eqref{eq:error:9}, we get the condition for time step:
	\begin{equation}\label{eq:error:10}
	\begin{split}
	\tau
	\lesssim \varepsilon^{\frac{38+d}{18-d}}\gamma_3(\varepsilon)^{-\frac{4}{18-d}}.\\
	\end{split}
	\end{equation}
	Solving \eqref{eq:error:9-1}, we get the condition for spatial ratio:
	\begin{equation}\label{eq:error:10-1}
	\begin{split}
	h^{q+2}
	\lesssim \varepsilon^{5+\frac{d-2}{8}}\tau^{\frac{d-2}{8}}
	\gamma_4(\varepsilon, \tau)^{-\frac{1}{2}},
	\end{split}	
	\end{equation}	
	by using the definition of $\gamma_4(\varepsilon, \tau)$ in \eqref{eq:C4:0},
	and submitting	
	$\tau\lesssim \min\{\varepsilon^6, \varepsilon^{\frac{38+d}{18-d}}\gamma_3(\varepsilon)^{-\frac{4}{18-d}}\}$ into
	\eqref{eq:error:10-1}, then we get
	\begin{equation}\label{eq:error:10-2}
	\begin{split}
	h
	\lesssim \min\{ 
	\varepsilon^{\frac{7d-26}{8(q+2)}}\gamma_4(\varepsilon,\tau)^{-\frac{1}{2(q+2)}},
	\varepsilon^{\frac{2d+76}{(18-d)(q+2)}}
	\gamma_3(\varepsilon)^{-\frac{d-2}{2(18-d)}}
	\gamma_4(\varepsilon,\tau)^{-\frac{1}{2(q+2)}}\},
	\end{split}	
	\end{equation}			
	From \eqref{eq:Rh3}-\eqref{eq:Rh4}, we easily get  
	\begin{eqnarray}
	\label{eq:error:11:1}
	\|\rho^{n+1}\|_{-1} & \leq Ch^{q+2}\|\phi^{n+1}\|_{H^{q+1}},\\
	\label{eq:error:11:2}
	\|\nabla \rho^{n+1}\| &\leq Ch^{q}\|\phi^{n+1}\|_{H^{q+1}}.
	\end{eqnarray}
	Estimate \eqref{eq:conclusion1}-\eqref{eq:conclusion2} follows from the application of the triangle inequality for \eqref{eq:error:11:1}-\eqref{eq:error:11:2} and \eqref{newcn:t14}.	
	We complete the proof.
\end{proof}  

\begin{remark}\label{newcn:rem:conv1}
	Note that the spectral estimate \eqref{newcn:eigen} is essential
	to the proof. Compared to Crank-Nicolson
	discretization, the diffusive Crank-Nicolson
	discretization has an extra numerical diffusion 
	$\varepsilon\delta_{tt}\Delta\phi^{n+1}/4$, 
	it is easier to bound the error growth. Here, we do not need
	$B>L/2\varepsilon$ to get the convergence, while
	in SL-CN scheme, there is a necessary requirement
	\cite{WangYu2018b}.
\end{remark}
\begin{remark}\label{newcn:rem:conv2}
	We present the error estimate of the fully discrete SLD-CN scheme.
	It needs stronger regularity described in Lemma \ref{lm:1}.
\end{remark}

%%%%%%%%%%%%%%%%%%%%%%%%%%%%%%%%%%%%%%%%%%%%%%%%%%%%%%%%%%%%%%%%%%%%%%%%%%%
\section{Implementation and numerical results}
\label{4}
We will give several examples to illustrate the performance of our schemes.

To test the numerical scheme, we solve \eqref{eq:CH} in
tensor product 2-dimensional domain $\Omega=[-1,1]\times[-1,1]$.
\eqref{eq:CN:full:1}-\eqref{eq:CN:full:2} is a linear system 
with constant coefficients for
$(\phi^{n+1}_h, \mu^{n+\frac12}_h)$, which can be efficiently solved.
We use a spectral transform with double quadrature points to
reduce the aliasing error and efficiently evaluate the
integration $(f(\frac32\phi^n_h-\frac12\phi^{n-1}_h),\varphi_h)$ 
in equation\eqref{eq:CN:full:2}.

Given $\phi^0_h$, to start the second order scheme, we use 
following first order stabilized scheme to generate $\phi^1_h\in V_M$
\begin{align}
\frac{1}{s}(\phi^{1}_h-\phi^0_h, \psi_h)=-(\nabla \omega^{1}_h, \nabla  \psi_h),
&\qquad \forall \psi_h \in V_M,\\
(\omega^{1}_h, \varphi_h)= \varepsilon(\nabla \phi^{1}_h, \varphi_h)+\frac{1}{\varepsilon}
(f(\phi^1_h),\varphi_h)+S\delta_t(\phi^{1}_h,\varphi_h),
&\qquad \forall \varphi_h \in V_M,
\end{align}
where $S = 1/\varepsilon$ is a stabilization constant.
Note that the BDF1 scheme generates a second-order
accurate solution at the first time step.

We take $\varepsilon = 0.05$ and $M = 127$(except Example \ref{ex:2}) and use two different
initial values to test the stability and accuracy of the SLD-CN scheme:
\begin{enumerate}
	\item[(i)] $\phi_0$: $\{\phi_0(x_i,y_j) \}\in\, {\bf{R}}^{2M\times2M}$
	with $x_i, y_j$ are tensor product Legendre-Gauss
	quadrature points and $\phi_0(x_i,y_j)$ is a uniformly
	distributed random number between $-1$ and $1$;
	\item[(ii)] $\phi_1$: the solution of the Cahn-Hilliard equation at
	$t=64\varepsilon^3$ which takes $\phi_0$ as its initial
	value.
\end{enumerate}

\subsection{Stability results}
Table \ref{tstab0:A},\ref{tstab0:B} show the required
minimum values of $A$ (resp. $B$) with different $\gamma$,
$B$ (resp. $A$) and $\tau$ values for stably solving (not
blow up in 4096 time steps) the Cahn-Hilliard equation
\eqref{eq:CH} with initial value $\phi_0$.  The results for
the initial value $\phi_1$ are similar. From the two tables,
we observe that for smaller $\tau$ values, the SLD-CN 
scheme is more stable than the SL-CN scheme proposed in 
\cite{WangYu2018b, wang_two_2018}), while both of
them are stable with $A=0$ and $B=0$ when $\gamma$ and $\tau$ 
are small enough. Due to the fact that the SLD-CN scheme 
has larger diffusion term than SL-CN scheme, SLD-CN schemes need
relatively smaller $A$ and $B$ than SL-CN scheme.
\begin{table}[htbp]
	\begin{tabular}{|c|c|c|c|c|c|c|c|c|}
		\hline
		\multirow{3}{*}{$\tau$ }
		&  \multicolumn{4}{|c|}{SL-CN}
		&  \multicolumn{4}{|c|}{SLD-CN}
		\\ \cline{2-9}&  \multicolumn{2}{|c|}{$\gamma=0.0025$ }
		&  \multicolumn{2}{|c|}{$\gamma=1$ }
		&  \multicolumn{2}{|c|}{$\gamma=0.0025$ }
		&  \multicolumn{2}{|c|}{$\gamma=1$ }
		\\ \cline{2-9} & $B=0$ & $B=10$ & $B=0$ & $B=10 $ &  $B=0$ & $B=10$ & $B=0$ & $B=10 $\\ \hline
		10  & 0.32 & 0.04  & 1   & 1  &  0.32 & 0.04 & 1  & 1\\ \hline
		1   & 0.32 & 0.08  & 8   & 4  &  0.32 & 0.08 & 8  & 2\\ \hline
		0.1 & 0.64 & 0.16  & 64  & 16 &  0.32 & 0    & 64 & 16\\ \hline
		0.01& 1.28 & 0.32  & 128 & 32 & 0     & 0    & 128& 16\\ \hline
		0.001&1.28 & 0.16  & 256 & 32 & 0     & 0    & 256& 8\\ \hline
		0.0001&0   & 0     & 256 &128 & 0     & 0    & 64 & 0\\ \hline
		1E-05 &0   & 0     & 512 &128 & 0     & 0    & 0  & 0  \\ \hline
		1E-06 &0   & 0     & 128 & 0  & 0     & 0    & 0  & 0\\ \hline
	\end{tabular} 
	\centering 
	\caption{The minimum values of $A$(only values $\{0, 2^i,i=0,\ldots,11\}\times\gamma$ are tested for $A$) 
		to make schemes SL-CN and SLD-CN stable when $\gamma$, $B$ and $\tau$ are taking different values.
	}
	\label{tstab0:A}
\end{table}
\begin{table}[htbp]
	\begin{tabular}{|c|c|c|c|c|c|c|c|c|}
		\hline
		\multirow{3}{*}{$\tau$ }
		&  \multicolumn{4}{|c|}{SL-CN}
		&  \multicolumn{4}{|c|}{SLD-CN}
		\\ \cline{2-9}&  \multicolumn{2}{|c|}{$\gamma=0.0025$ }
		&  \multicolumn{2}{|c|}{$\gamma=1$ }
		&  \multicolumn{2}{|c|}{$\gamma=0.0025$ }
		&  \multicolumn{2}{|c|}{$\gamma=1$ }
		\\ \cline{2-9} & $A=0$ & $A=1$ & $A=0$ & $A=4$ & $A=0$ & $A=1$ & $A=0$ & $A=4$\\ \hline
		10  & 32 & 0  & 64   & 0  & 32 & 0 & 32 & 0\\ \hline
		1   & 32 & 0  & 64   & 8  & 16 & 0 & 32 & 8\\ \hline
		0.1 & 32 & 0  & 64   & 16 & 8  & 0 & 32 & 16\\ \hline
		0.01& 32 & 0  & 64   & 16 & 0  & 0 & 32 & 16\\ \hline
		0.001&16 & 0  & 32   & 16 & 0  & 0 & 16 & 16\\ \hline
		0.0001&0 & 0  & 32   & 32 & 0  & 0 & 2  & 2\\ \hline
		1E-05 &0 & 0  & 32   & 32 & 0  & 0 & 0  & 0 \\ \hline
		1E-06 &0 & 0  & 8    & 8  & 0  & 0 & 0  & 0\\ \hline
	\end{tabular} 
	\centering 
	\caption{The minimum values of $B$(only values $\{0, 2^i,i=0,\ldots,9\}$ are tested for $B$) 
		to make schemes SL-CN and SLD-CN stable when $\gamma$, $A$ and $\tau$ are taking different values. 
	}
	\label{tstab0:B}
\end{table}

\subsection{Accuracy results}
\begin{ex}\label{ex:1}
We take initial value $\phi_1$ to test the temporal accuracy of the
two schemes: SLD-CN scheme and SL-BDF2 scheme.  
The Cahn-Hilliard equation with
$\gamma=0.0025$ is solved from $t=0$ to $T=12.8$.
We take stability constants  $A=0.25$ and $B=5$ in the both schemes.
 To calculate the numerical error, we use the numerical result
generated by the SL-BDF2 scheme using $\tau=10^{-3}$ as a reference of exact
solution. We see that the SLD-CN scheme is second order accurate in
$ L^2$ norm by the time step $\tau=0.01, 0.02, 0.04, 0.08, 0.16$. 
%For comparison, we compute a reference solution 
% which is proposed in 
%\cite{wang_two_2018}. 
From Figure \ref{figerror}, 
we find the error of the SLD-CN scheme is obviously 
smaller than the error of the SL-BDF2 scheme.
\begin{figure}[!h]
	\centering
	\includegraphics[trim=40mm 90mm 40mm 90mm, clip, width=0.60\textwidth]{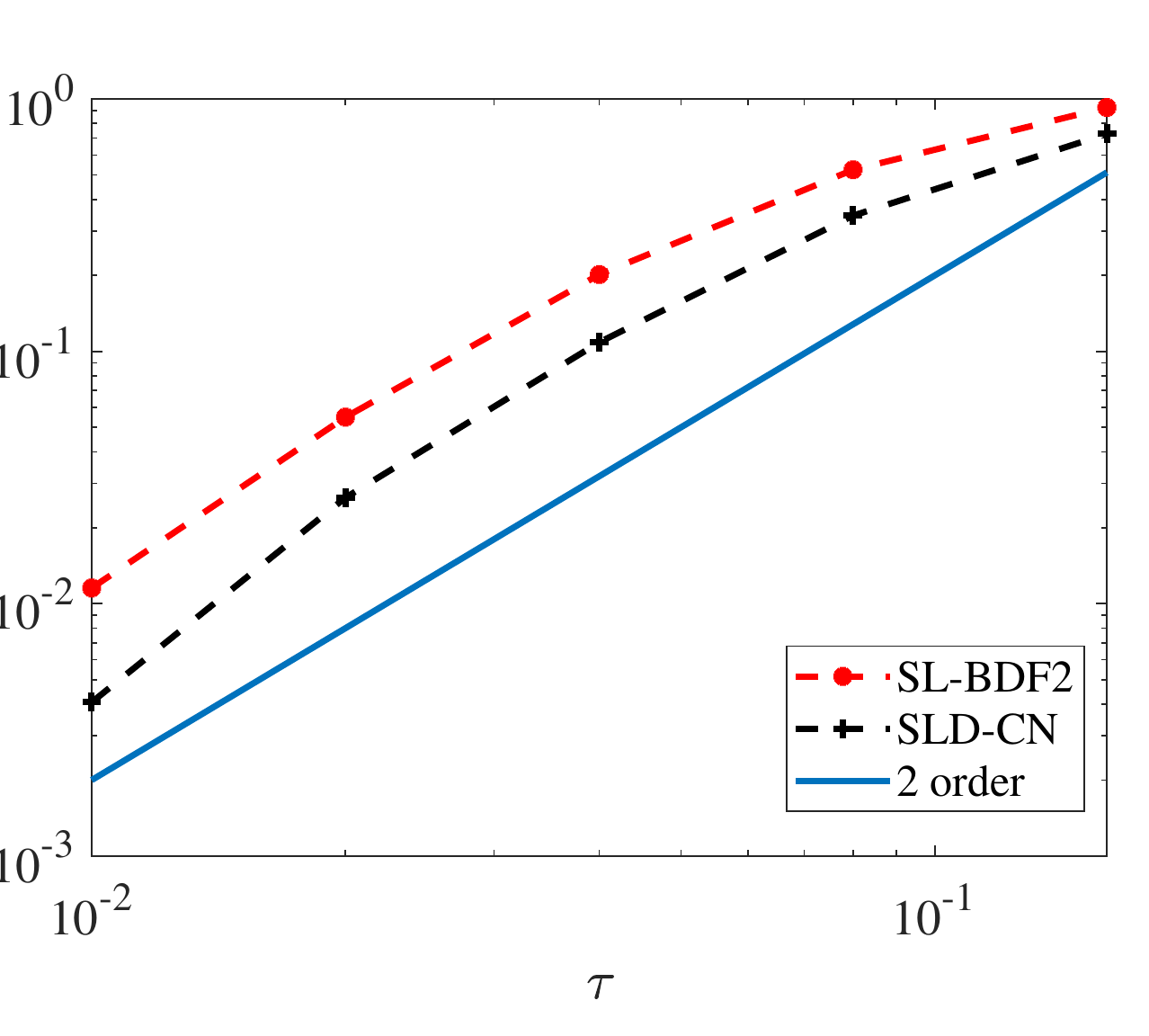}
	\caption{Temporal convergence of SLD-CN scheme and SL-BDF2 scheme.}
	\label{figerror}
\end{figure}
\end{ex}

\begin{ex}\label{ex:2}
	We take initial value $\phi_1$ to test the spatial accuracy of the
    SLD-CN scheme.  The Cahn-Hilliard equation with
	$\gamma=0.0025$ are solved from $t=0$ to $T=1$ with time step size $\tau=10^{-5}$.
	We take stability constants  $A=0.025$ and $B=0.5$.
	To calculate the numerical error, we use the numerical result
	generated using $M=255$ as a reference of exact
	solution. Figure \ref{fighorder} presents the semilogy plot of errors in $H^{-1}$ norm, $ L^2$ norm and $H^1$ norm against the polynomial degree $M= 17, 33, 49, 65, 87$ for the SLD-CN scheme.
    We observe that the SLD-CN scheme in $H^{1}$ norm, $ L^2$ norm and $H^{-1}$ norm are all spectral convergent. 
    The convergence rate in $H^{-1}$ norm is higher than it in $H^{1}$ norm, which is as expected in Theorem \ref{newcn:error}.
\begin{figure}[!h]
	\centering
	\includegraphics[trim=40mm 90mm 40mm 90mm, clip, width=0.60\textwidth]{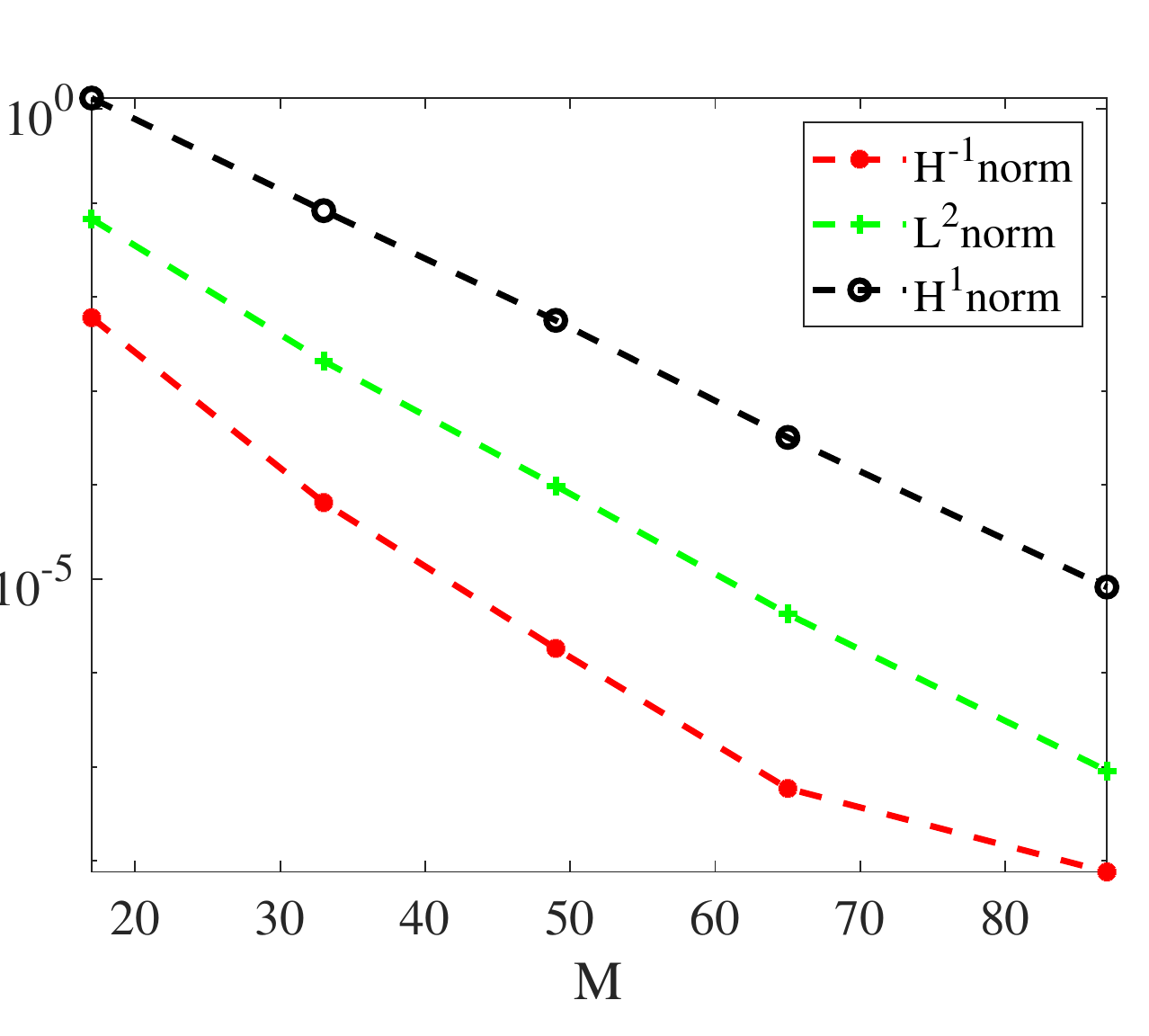}
	\caption{Spatial convergence 
		of SLD-CN scheme.}
	\label{fighorder}
\end{figure}
\end{ex}

\subsection{Adaptive time stepping}
Several adaptive time stepping strategies have been
implemented to Cahn-Hilliard equation.  
We propose an adaptive time-stepping strategy in which the time step 
is defined by the moving speed of the interface for SLD-CN scheme. 
The method is presented in Algorithm \ref{al1}.
We update the time step using the equation
$A_{dp}(e_{n+1},\tau_{n+1})$, which is proposed 
by Gomez and Hughes \cite{gomez_provably_2011}.
Our default values for the safety coefficient $\rho$ and the tolerance $tol$ are given as $\rho=0.9$, $tol=10^{-3}$. The minimum and maximum time steps are taken as $\tau_{min}=10^{-6}$ and $\tau_{max}=0.01$, respectively. $e_{n+1}$ is the approximation of the relative ratio between the interface velocity and the interface thickness at the $(n+1)$th time level. The initial time step is taken as $10^{-3}$. 
\begin{al}\label{al1}
	Time step adaptive procedure:
	\begin{itemize}
		\item Step 1: Compute $\phi^{n+1}$ by SLD-CN scheme with $\tau_{n+1}$;\\
		\item Step 2: Calculate $E_{C}^{n+1}(\phi^{n+1}, \phi^{n}, B)$.
		%$= E_{\varepsilon}(\phi^{n+1})
		%+\left(\frac{L}{4\varepsilon}+\frac{B}{2}\right)\|\delta_%{t}\phi^{n+1}\|^{2}
		%+\frac{\varepsilon}{8}\|\nabla\delta_t \phi^{n+1}\|^{2}$;
		\\
		\item Step 3: Calculate
		$e_{n+1}= 10\left( \frac{\| \phi^{n+1} - \phi^{n}\|}{\varepsilon E_{C}^{n+1}(\phi^{n+1}, \phi^{n}, B)} \right)^2$ and\\ 
		$A_{dp}(e_{n+1},\tau_{n+1})=\rho \left(\frac{tol}{e_{n+1}}\right)^{1/2}\tau_{n+1};$
		\item Step 4: if $e_{n+1}>tol$, then\\
		recalculate time step:
		$\tau_{n+1}\leftarrow 
		\max\{\tau_{min}, \min\{A_{dp}(e_{n+1},\tau_{n+1}), \tau_{max}\}\}$;\\
		%Update stabilization terms: $A=10^3\gamma \tau, B=50A$ %end;		
		goto Step 1;\\
		else\\ 
		update time step size 
		$\tau_{n+2}\leftarrow 
		\min\{A_{dp}(e_{n+1},\tau_{n+1}), \tau_{max}\}$;\\
		%Update stabilization terms: $A=10^3\gamma \tau, B=50A$  end;
		continue to next time step.
	\end{itemize}
\end{al}

We solve the Cahn-Hilliard equation with initial value $\phi_0$ and $M=63$ until $T=30$. We take $\gamma=0.0025$, $A=1$, $B=0.25$.
We present numerical results of phase evolutions using large time steps, 
adaptive time steps, and small time steps for Cahn-Hilliard equation in Figure \ref{fig1}.
We take a uniform large time step $\tau=0.01$ and a uniform 
small time step $\tau=10^{-5}$ for comparison.
It is noted that the solutions by adaptive time steps in the second row are consistent with the solutions by uniform small time step in the third row. On the other hand, the uniform large time step solutions in the first row are far different from the adaptive time steps solutions. Figure \ref{fig2} presents the adaptive time steps and discrete energy accordingly with the time.
The time steps almost grow from $\tau=10^{-6}$ to $\tau=10^{-2}$.
The last time step decreases because it is only 0.0034 from the second last step to the end time.
Also, the discrete energy curve of adaptive time steps coincides with it of uniform small time steps $\tau=10^{-5}$, and does not coincide with that of uniform large time steps $\tau=0.01$. 
It indicates that the adaptive time stepping for the SLD-CN scheme is very effective.

\begin{figure}[!h]
	\centering
	\subfigure[Adaptive time step size.]{
	\includegraphics[trim=40mm 90mm 40mm 90mm, clip, width=0.45\textwidth]{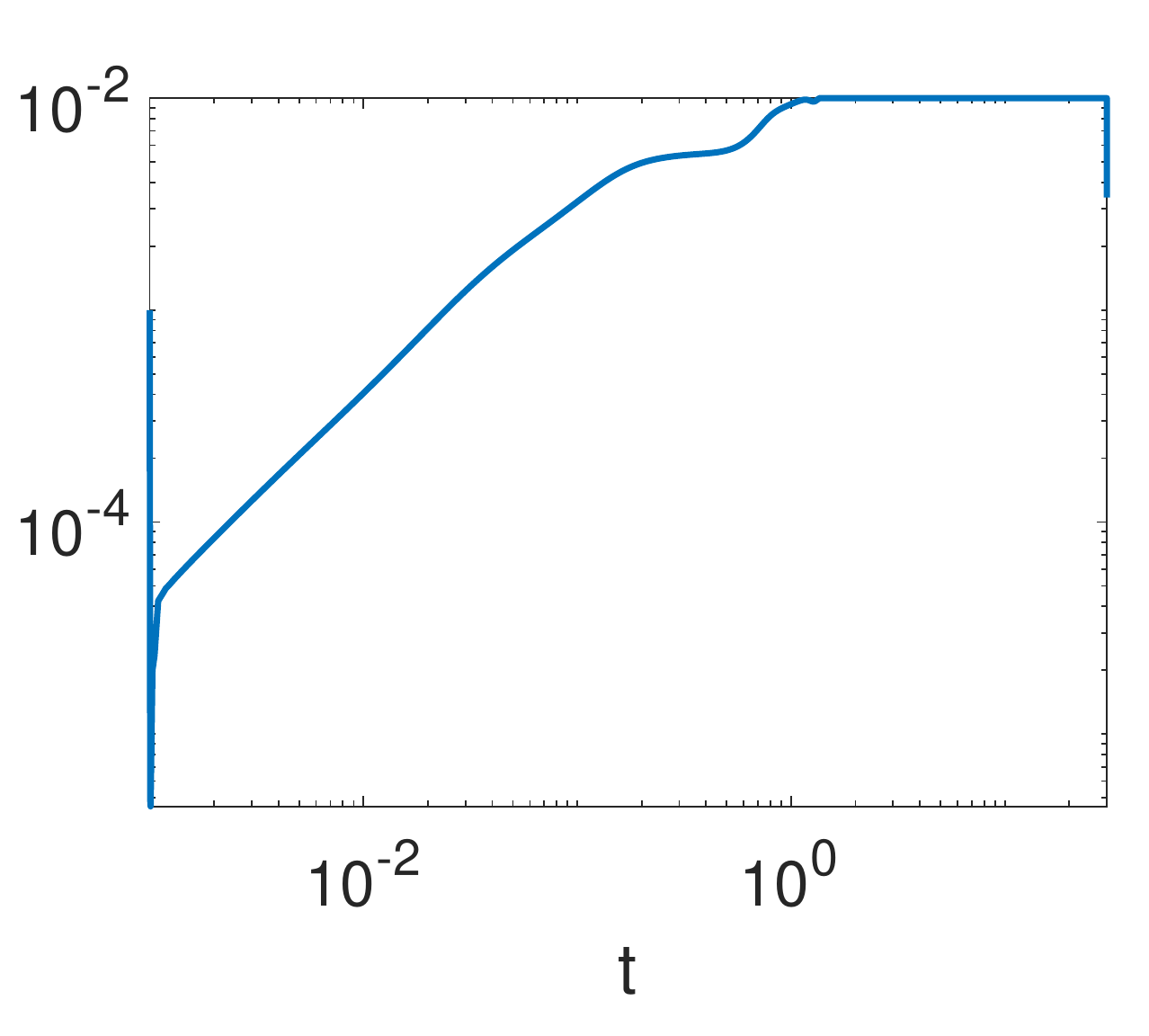}
    }
	\subfigure[Discrete energy.]{
	\includegraphics[trim=40mm 90mm 40mm 90mm, clip, width=0.45\textwidth]{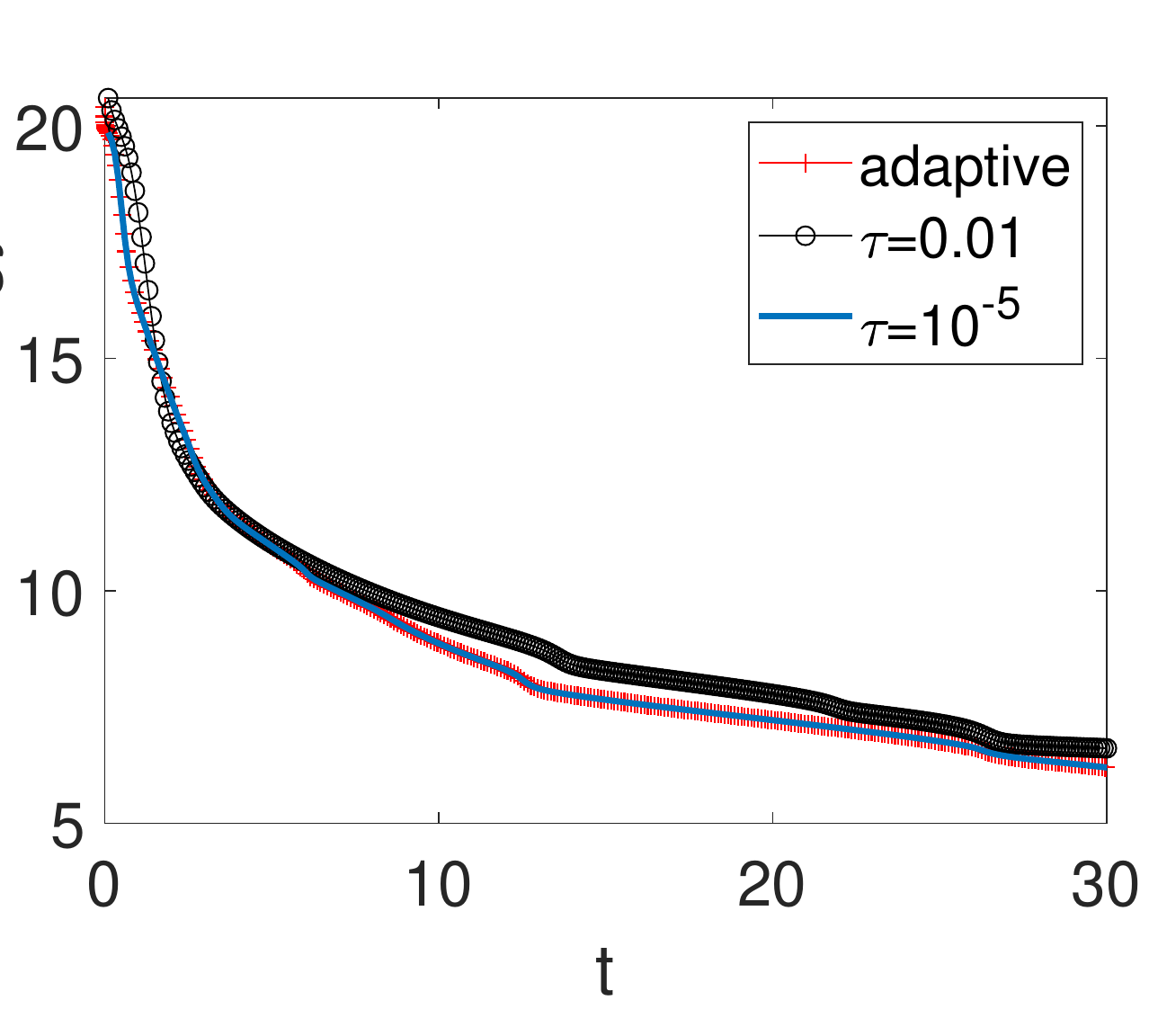}
    }
	\caption{Adaptive time steps and discrete energy against time until $T=30$. }
	\label{fig2}
\end{figure}

\begin{figure}[!h]
	\centering
	\subfigure[$\tau=0.01$]{
		\begin{minipage}{1\textwidth}
			\includegraphics[trim=12mm 3mm 12mm 2mm, clip, width=0.32\textwidth]{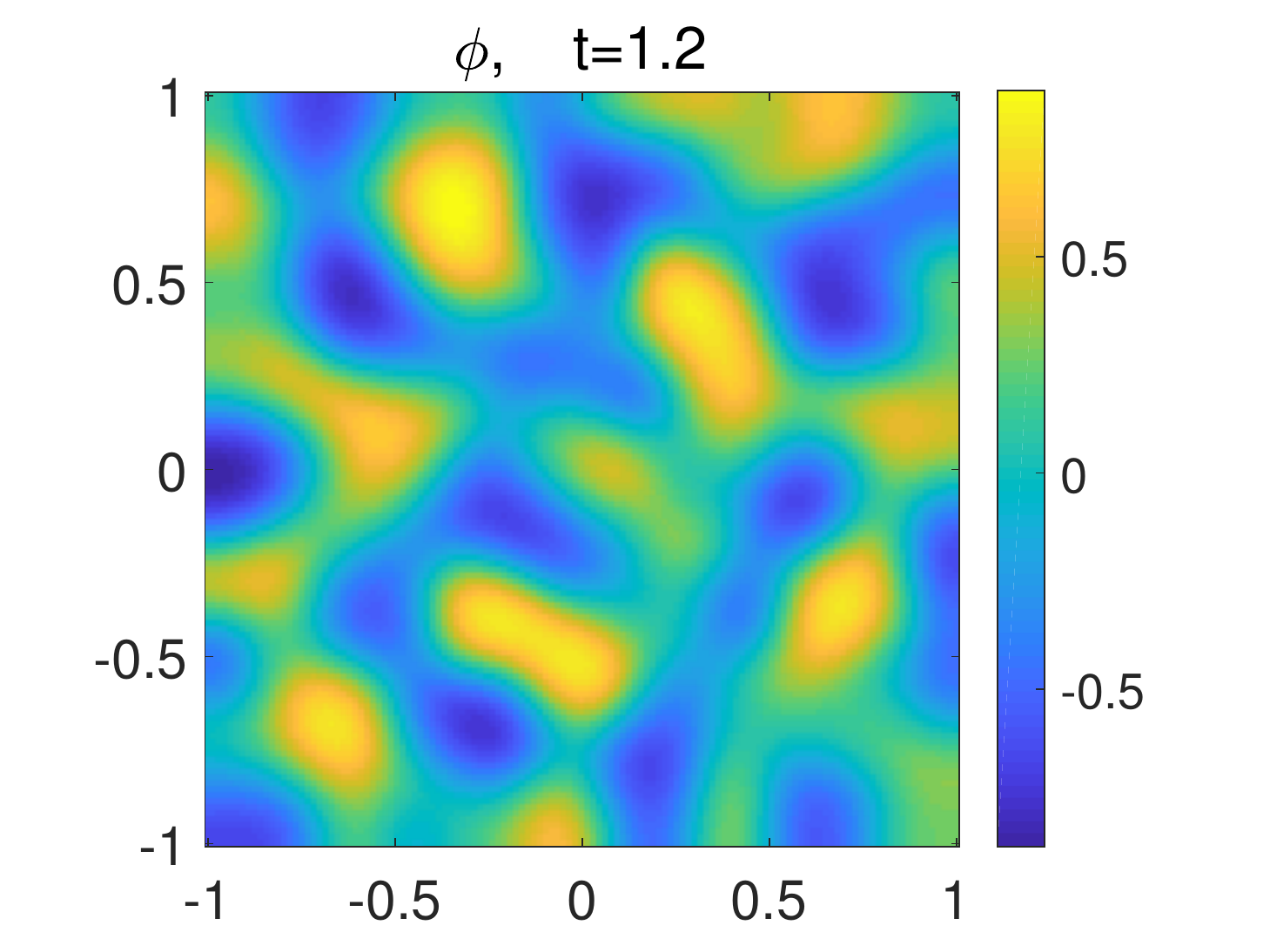} 
			\includegraphics[trim=12mm 3mm 12mm 2mm, clip, width=0.32\textwidth]{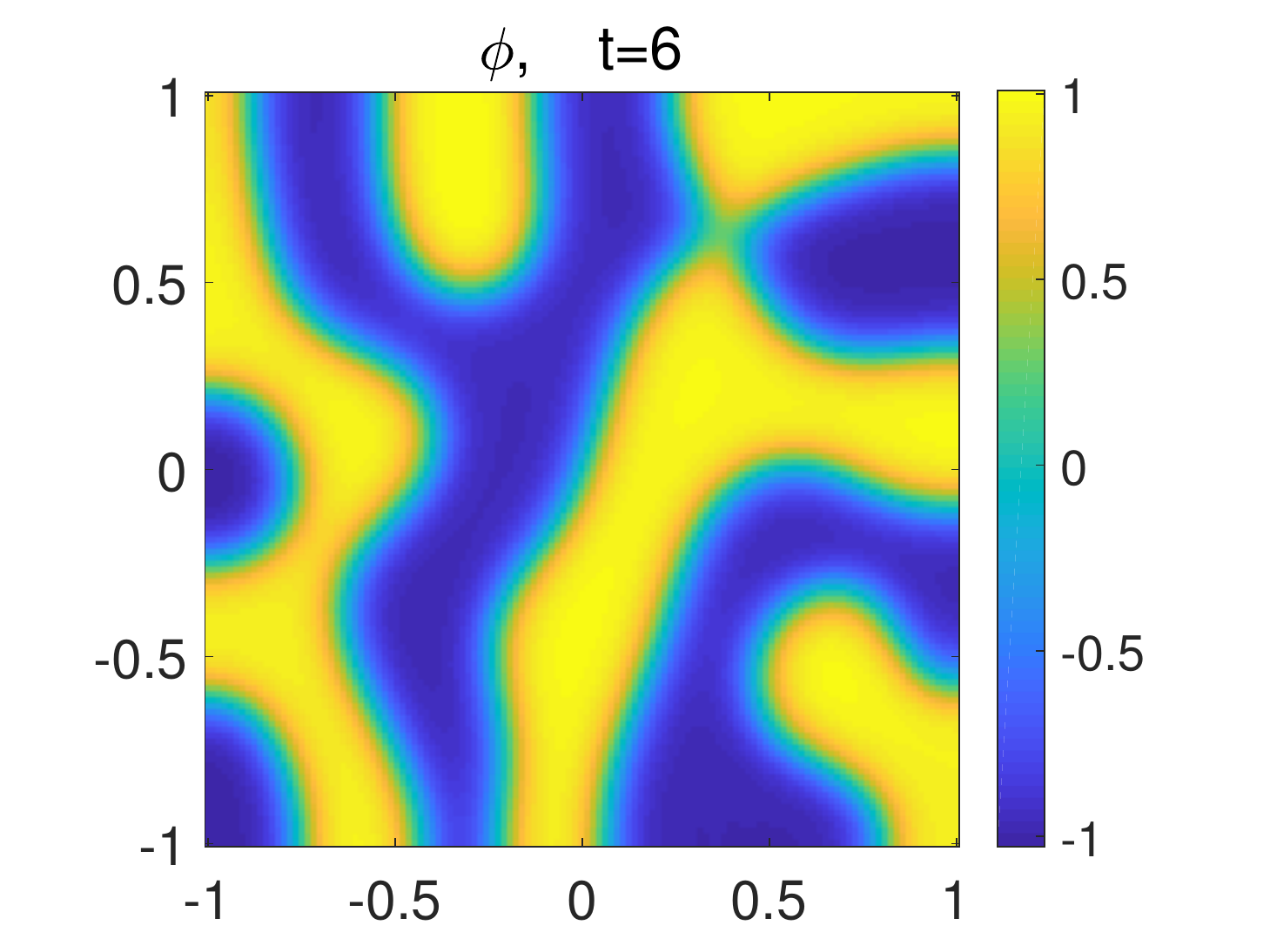}
			\includegraphics[trim=12mm 3mm 12mm 2mm, clip, width=0.32\textwidth]{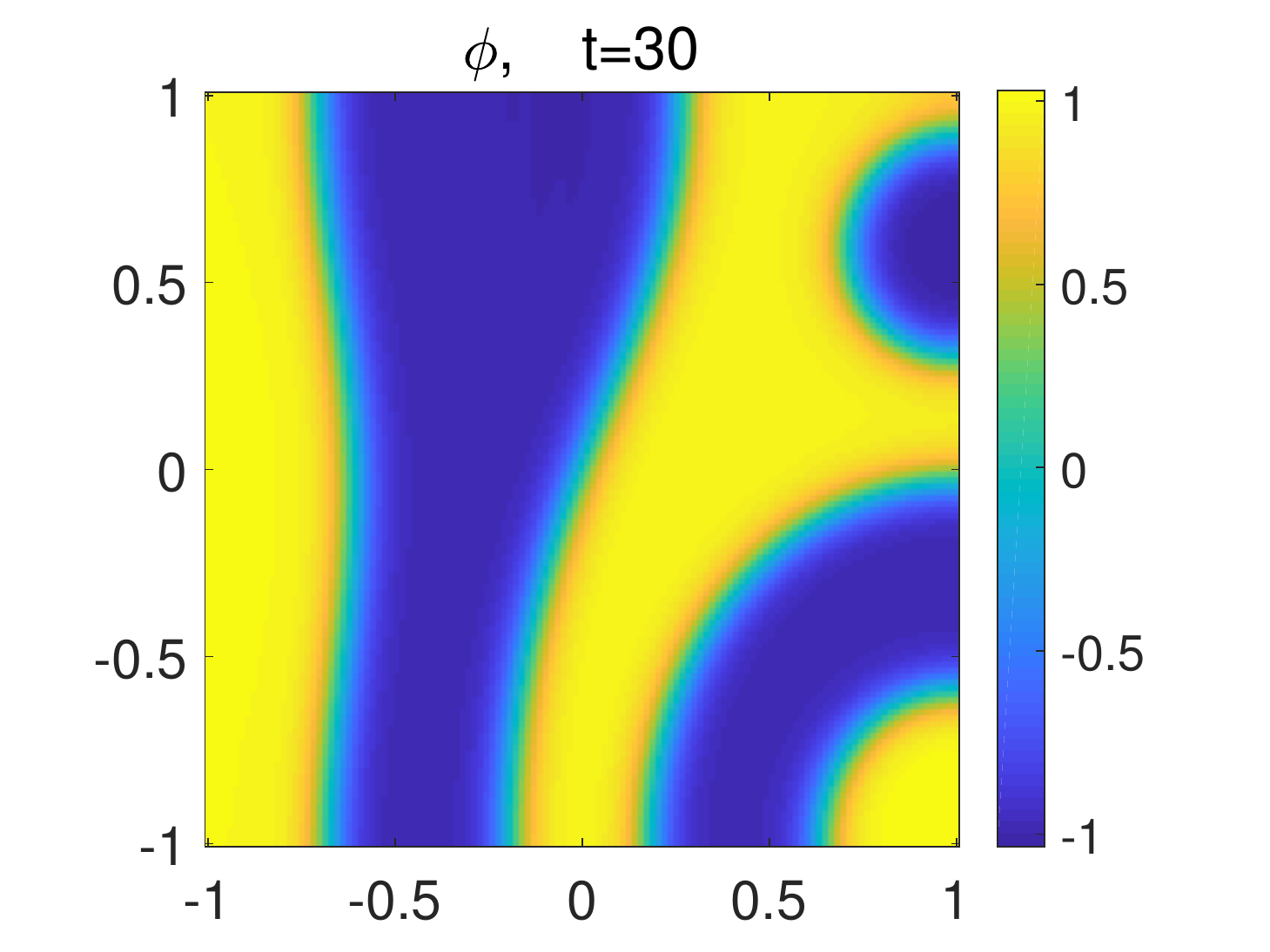}
		\end{minipage}
	}
	\subfigure[Adaptive time steps]{
		\begin{minipage}{1\textwidth}
			\includegraphics[trim=12mm 3mm 12mm 2mm, clip, width=0.32\textwidth]{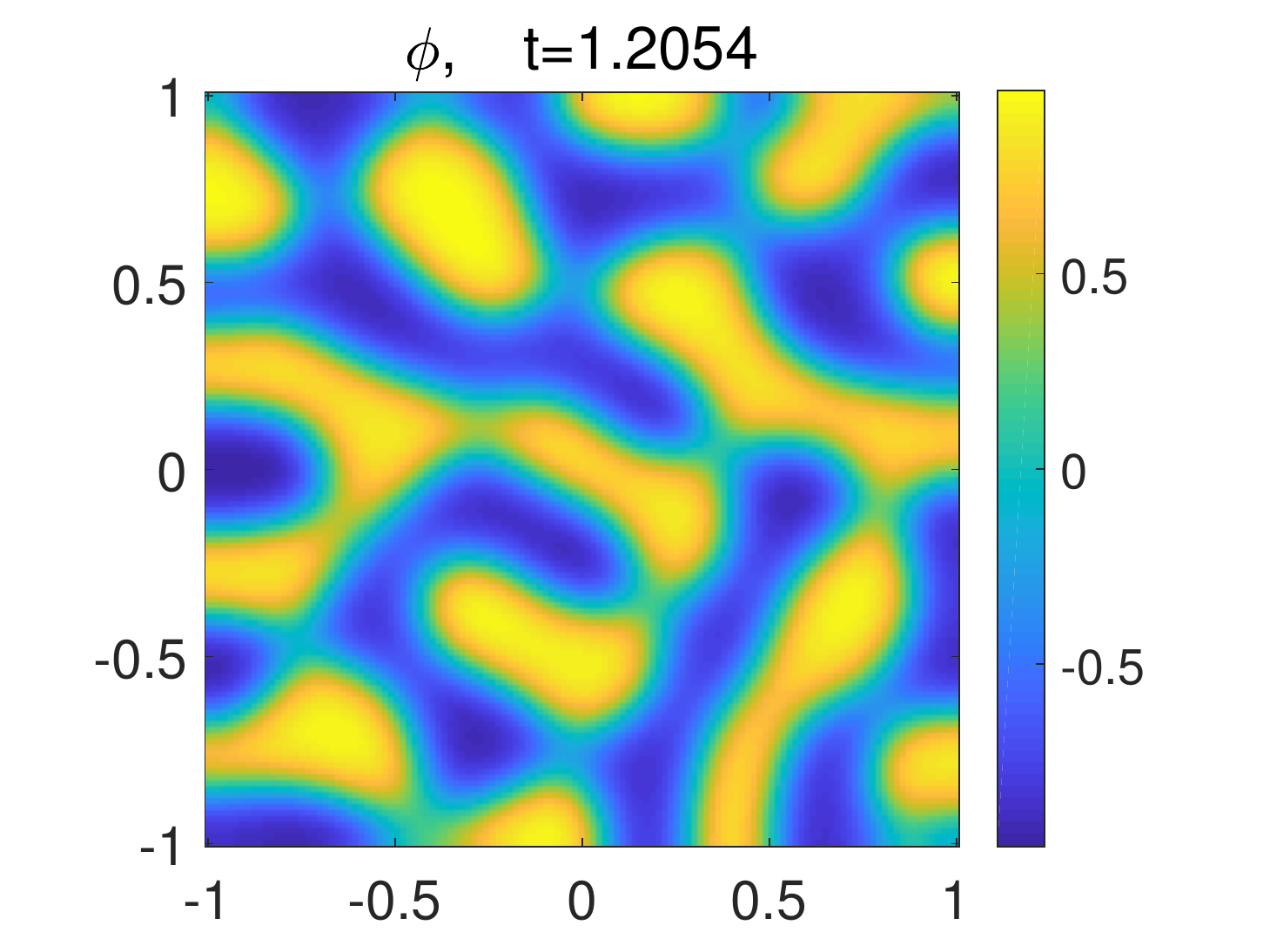}
			\includegraphics[trim=12mm 3mm 12mm 2mm, clip, width=0.32\textwidth]{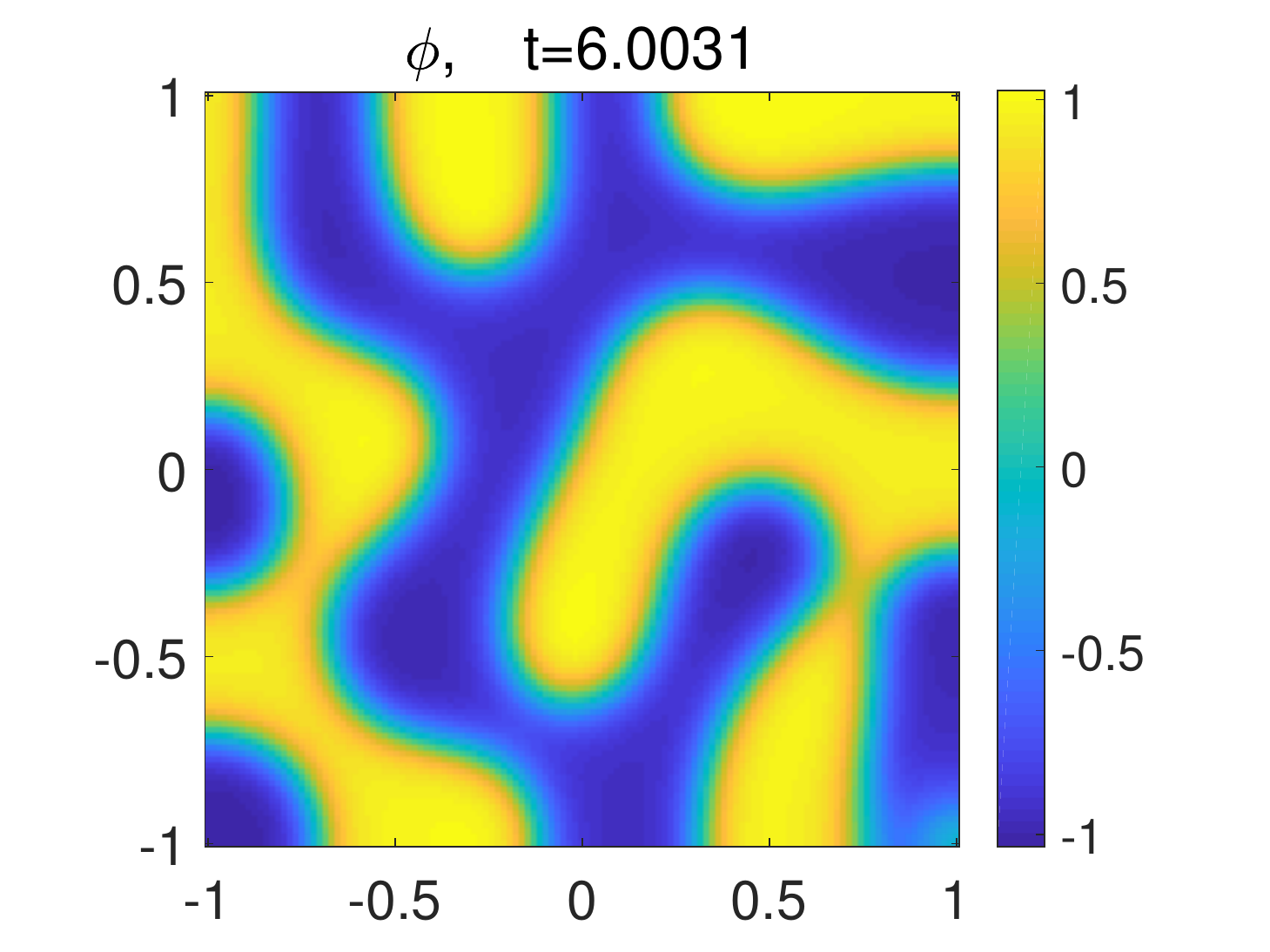}
			\includegraphics[trim=12mm 3mm 12mm 2mm, clip, width=0.32\textwidth]{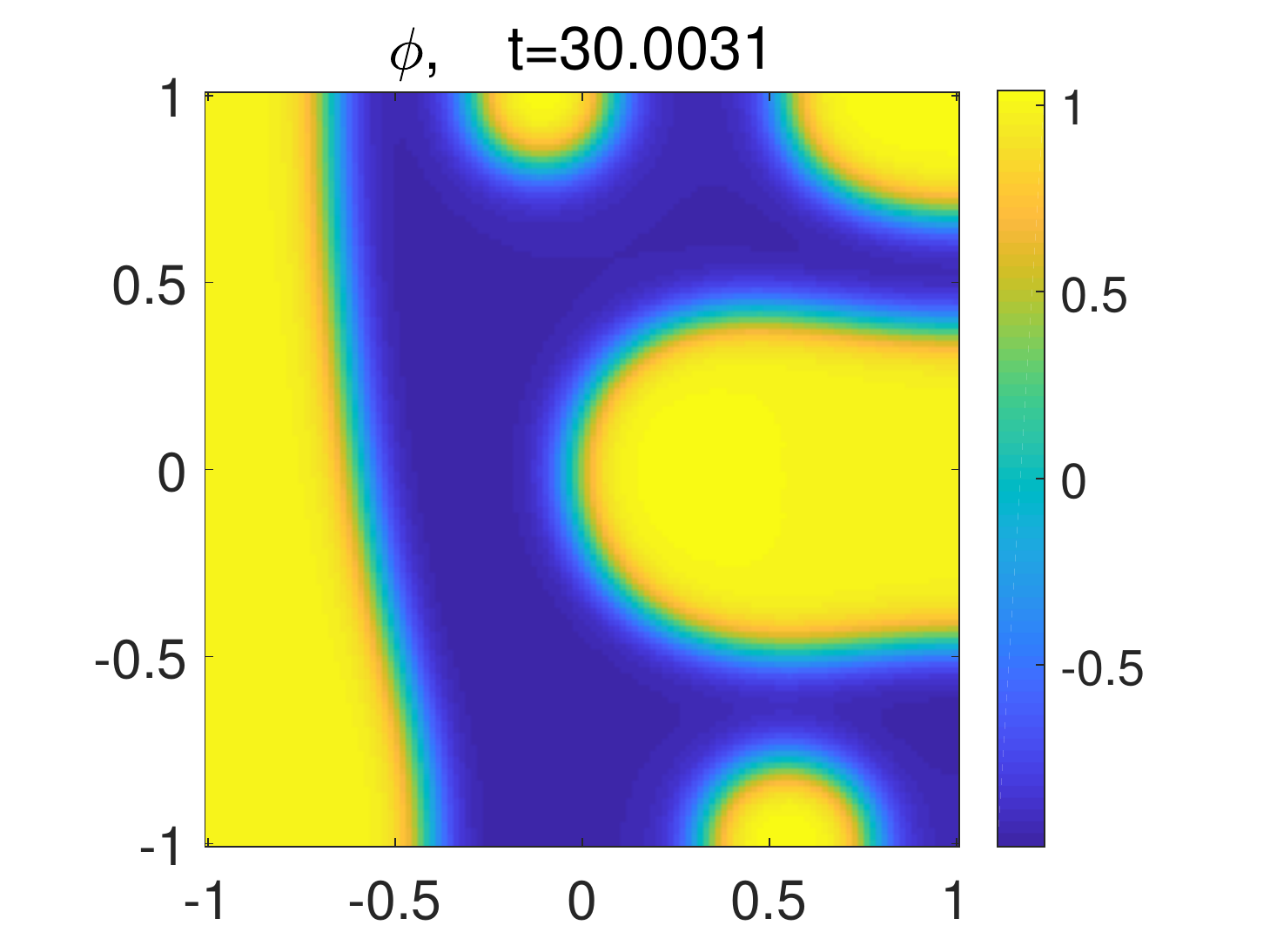}
		\end{minipage}
	}
	\subfigure[$\tau=0.00001$]{
		\begin{minipage}{1\textwidth}
			\includegraphics[trim=12mm 3mm 12mm 2mm, clip, width=0.32\textwidth]{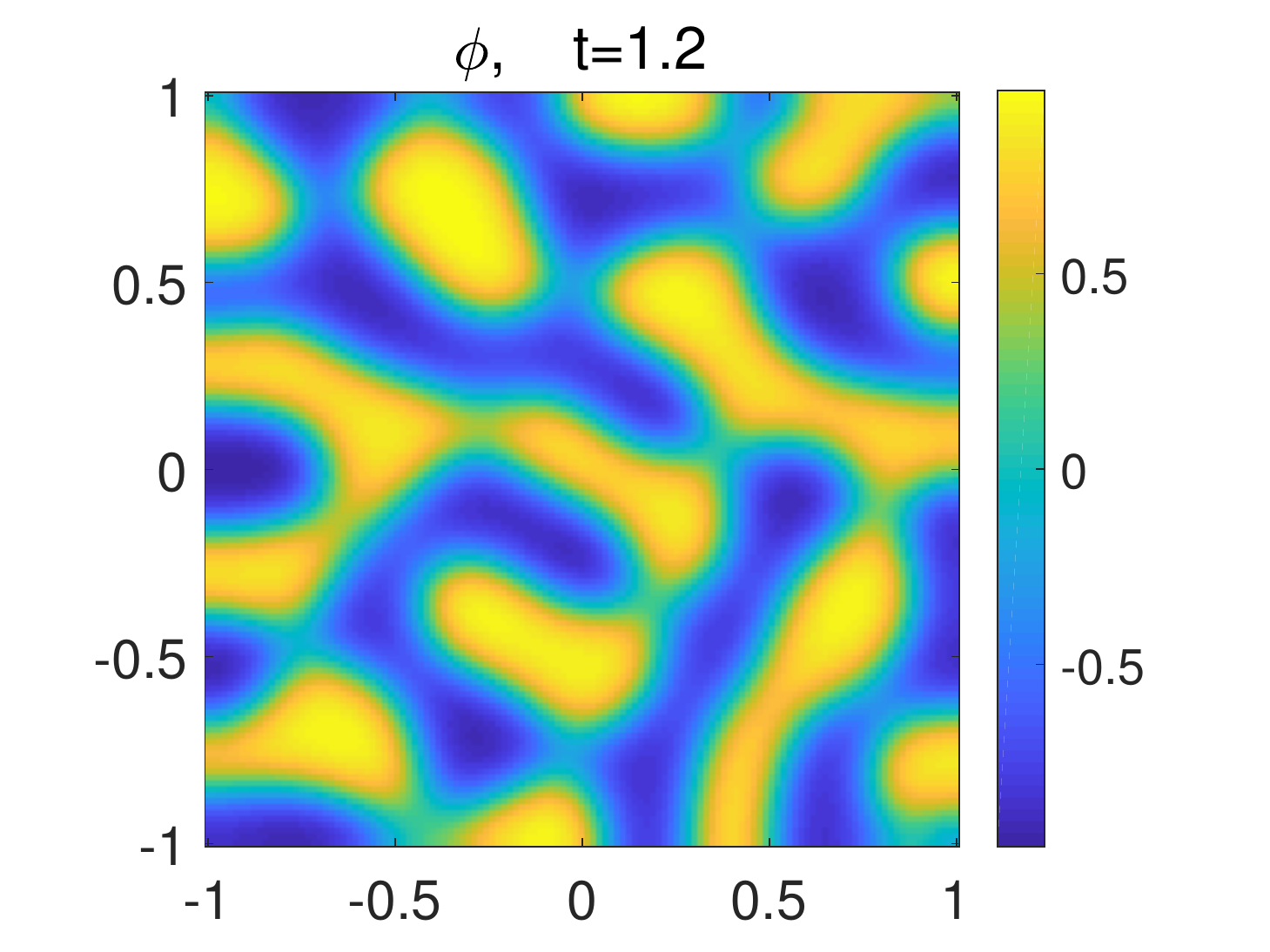}
			\includegraphics[trim=12mm 3mm 12mm 2mm, clip, width=0.32\textwidth]{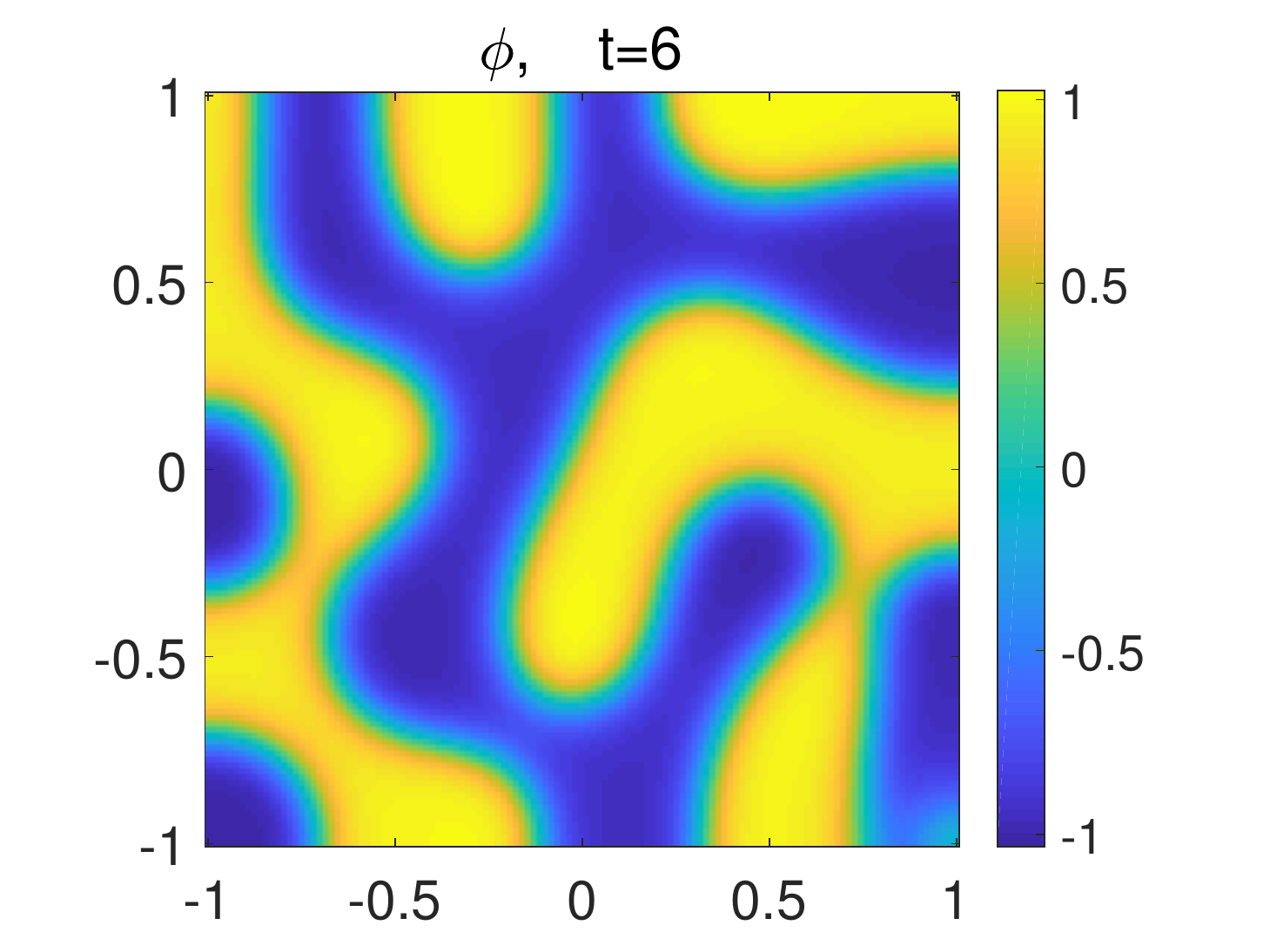}
			\includegraphics[trim=12mm 3mm 12mm 2mm, clip, width=0.32\textwidth]{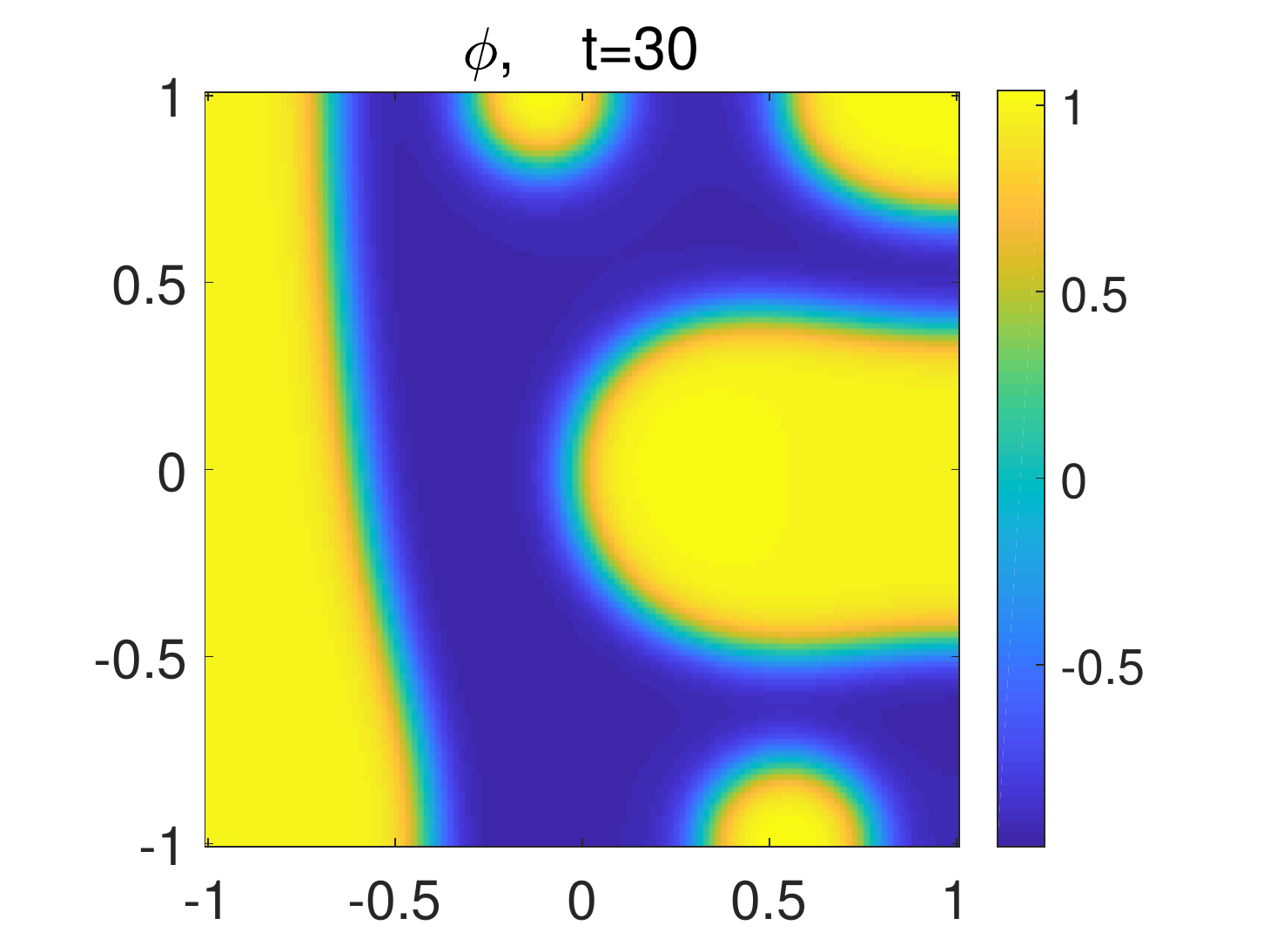}
		\end{minipage}
	}
	\caption{Numerical comparisons among large time steps, 
		adaptive time steps, and small
		time steps for Cahn-Hilliard equation. }
	\label{fig1}
\end{figure}

%%%%%%%%%%%%%%%%%%%%%%%%%%%%%%%%%%%%%%%%%%%%%%%%%%%%%%%%%%%%%%%%%%%%%%%%%%%
\section{Conclusions}
\label{5}
We propose the SLD-CN scheme by modifying the stabilized linear 
Crank-Nicolson scheme for the Cahn-Hilliard equation. In the scheme, the nonlinear bulk force 
is treated explicitly with two additional linear
stabilization terms: $-A\tau \Delta\delta_t\phi^{n+1}$ and $B\delta_{tt}\phi^{n+1}$. 
We give a rigorous optimal error analysis of the fully discrete 
SLD-CN scheme, which removes the condition $B>L/2\varepsilon$ 
for the error analysis of the SL-CN scheme.
This error analysis holds for the special case $A=0$ and/or 
$B=0$ as well. Numerical results verified the stability and 
accuracy of the proposed schemes. 

%%%%%%%%%%%%%%%%%%%%%%%%%%%%%%%%%%%%%%%%%%%%%%%%%%%%%%%%%%%%%%%%%%%%%%%%%%
\section*{Acknowledgment}
%\label{6}
This work was partially supported by NNSFC Grant 11771439 and 91852116 and China National Program on Key Basic Research Project 2015CB856003. 

%%%%%%%%%%%%%%%%%%%%%%%%%%%%%%%%%%%%%%%%%%%%%%%%%%%%%%%%%%%%%%%%%%%%%%%%%%
\appendix
\section{Estimate of the constants in Assumption \ref{lm:1}}
\label{A1}
\begin{lemma}\label{lm:reg} 
	Suppose Assumption \ref{newcn:ap:1} (i)-(iii) and Assumption \ref{newcn:ap:2} are satisfied. We have following regularity results for the exact solution $\phi$ of (\ref{eq:CH}) with $\gamma = 1$.
\begin{enumerate}
	\item[(i)]
	$\int_{0}^{\infty}\|\phi_t\|_{-1}^{2}{\rm d}t + \esssup \limits_{t\in[0,\infty]}
	E_{\varepsilon}(\phi) \lesssim \varepsilon^{-\beta_1}$,
	and $\| \phi \|_{H^1}^2 \lesssim \varepsilon^{-(\sigma_1+1)}$;
		
	\item[(ii)]
	$   \esssup \limits_{t\in[0,\infty]}  \|\phi_{t}\|_{-1}^2+ \varepsilon\int_{0}^{\infty} \|\nabla \phi_{t}\|^2{\rm d} t
	\lesssim \varepsilon^{-\beta_2} $;
		
	\item[(iii)]
	$\esssup \limits_{t\in[0,\infty]}\|\phi_t\|^2 + \varepsilon \int_{0}^{\infty} \|\Delta \phi_t\|^2 {\rm d} t
	\lesssim \varepsilon^{- \beta_3}$;
	
	\item[(iv)]
	$ \int_{0}^{\infty} \|\phi_{tt}\|_{-1}^2 {\rm  d} t
	+  \esssup \limits_{t\in[0,\infty]} \varepsilon\|\nabla \phi_{t}\|^2
	\lesssim \varepsilon^{-\beta_4}$;
	
	\item[(v)]
	$\esssup \limits_{t\in[0,\infty]}  \| \Delta^{-1}\phi_{tt}\|^2 + \varepsilon\int_{0}^{\infty} \| \phi_{tt} \|^2 {\rm d} t
	\lesssim \varepsilon^{-\beta_5}$;
	
	\item[(vi)]
	$ \esssup \limits_{t\in[0,\infty]}  \| \phi_{tt}\|_{-1}^2 
	+ \varepsilon\int_{0}^{\infty} \| \nabla \phi_{tt} \|^2 {\rm d} t
	\lesssim  \varepsilon^{-\beta_6}$;
	
	\item[(vii)]
	$ \esssup \limits_{t\in[0,\infty]}  \| \phi_{tt}\|^2 
	+ \varepsilon\int_{0}^{\infty} \| \Delta \phi_{tt} \|^2 {\rm d} t
	\lesssim{} \varepsilon^{-\beta_7}$;	
	
	\item[(viii)]
	$\int_{0}^{\infty}  \|\Delta^{-1}\phi_{tt}\|_{-1}^2{\rm d} t
	+ \varepsilon \esssup \limits_{t\in[0,\infty]}   \|\phi_t\|_{-1}^2
	\lesssim{} \varepsilon^{-\beta_8}$;

	\item[(ix)]
	$\esssup \limits_{t\in[0,\infty]} \|\Delta^{-1}\phi_{t}\|^2
	+  \varepsilon \int_{0}^{\infty} \|\phi_t\|^2 {\rm d} t\lesssim{} \varepsilon^{-\beta_9}$;
	
	\item[(x)]
	$\int_{0}^{\infty}  \|\phi_{t}\|_{H^{q+1}}^2{\rm d} t
	\lesssim{} \varepsilon^{-\beta_{10}}$, when $q=1$;
	
	\item[(xi)]
	$\int_{0}^{\infty}  \|\phi_{tt}\|_{H^{q+1}}^2{\rm d} t
	\lesssim{} \varepsilon^{-\beta_{11}}$, when $q=1$;
	
	\item[(xii)]
	$\int_{0}^{T} \|\phi\|^2_{H^2}{\rm d} t
	\lesssim \varepsilon^{-(\sigma_1+3)}$;
	
	\item[(xiii)]
	$\int_{0}^{T}\|\mu\|^2_{H^2}{\rm d} t
	\lesssim  \varepsilon^{-\beta_{12}}$.
\end{enumerate}
where $\beta_1=\sigma_1$ and
\begin{equation*}
\begin{split}
\beta_2&=\max\{\sigma_1+3, \sigma_2\}, \\
\beta_3&=\max\{ (\sigma_1+1)(p-2)+\beta_2+4, \sigma_3\}, \\
\beta_4&=\max\{ \beta_2+2+\tfrac12\beta_3+\tfrac12(\sigma_1+1)(p-3)^+, \sigma_4\}, \\
\beta_5&=\max\{ \beta_2+\beta_4+1+(\sigma_1+1)(p-3)^+, (\sigma_1+1)(p-2)+\beta_4+3, \sigma_5\}, \\
\beta_6&=\max\{ (\sigma_1+1)(p-3)^++\beta_2+\beta_4+5, \beta_5+2, \sigma_6\},\\
\beta_7&=\max\{ (\sigma_1+1)(p-3)^++\beta_2+\beta_4+5, (\sigma_1+1)(p-2)+\beta_6+4, \sigma_7\},\\
\beta_8&=\max\{ (\sigma_1+1)(p-2)+\beta_1+2, (\sigma_1+1)(p-2)+\beta_2+3, \sigma_2+1\},\\
\beta_9&=(\sigma_1+1)(p-2)+\beta_1 +3,\\
\beta_{10}&=\max\{ \beta_2+1, \beta_3+1, \beta_9+1\},\\
\beta_{11}&=\max\{\beta_5+1, \beta_6+1, \beta_7+1\},\\
\beta_{12}&=\max\{\sigma_{1}, (\sigma_{1}+1)(p-1)+3\}.
\end{split}
\end{equation*}	
\end{lemma}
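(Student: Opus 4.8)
The plan is to establish Lemma~\ref{lm:reg} by a classical energy-method bootstrap: one differentiates the Cahn--Hilliard system \eqref{eq:CH} (with $\gamma=1$) in time zero, one, or two times, tests each resulting equation against the appropriate negative- or positive-order operator applied to the unknown, and tracks every constant as an explicit power of $\varepsilon$. Item~(i) is the base case and needs no differentiation: integrating the dissipation law \eqref{eq:CH:Edis} gives $\int_0^\infty\|\phi_t\|_{-1}^2\,{\rm d}t+\esssup_t E_\varepsilon(\phi)\le E_\varepsilon(\phi^0)\lesssim\varepsilon^{-\sigma_1}$, and then $\|\phi\|_{H^1}^2\lesssim\varepsilon^{-1}E_\varepsilon(\phi^0)+\|\phi\|^2\lesssim\varepsilon^{-(\sigma_1+1)}$ after invoking the coercivity \eqref{eq:AP:Fcoercive}. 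This uniform $H^1$ bound on $\phi$, combined with the Sobolev embedding $H^1(\Omega)\hookrightarrow L^6(\Omega)$ in $d\le 3$ and the growth conditions \eqref{eq:AP:fp}--\eqref{eq:AP:fpp} (or \eqref{newcn:eq:Lip}--\eqref{newcn:eq:Lip2}), is the engine for all nonlinear estimates: terms like $\|f'(\phi)v\|$ and $\|f''(\phi)v^2\|$ are split by H\"older and Gagliardo--Nirenberg interpolation so that each factor $|\phi|^{p-2}$ or $|\phi|^{(p-3)^+}$ contributes a power $\|\phi\|_{H^1}^{p-2}$ resp.\ $\|\phi\|_{H^1}^{(p-3)^+}$, i.e.\ a power of $\varepsilon^{-(\sigma_1+1)}$, while the restriction $2\le p\le 3+\tfrac{d}{3(d-2)}$ is exactly the condition that keeps the required embeddings admissible when $d=3$.

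For the hierarchy (ii)--(ix) I would proceed strictly in the order listed, since each exponent $\beta_j$ is defined in terms of the earlier ones. Every step yields a differential inequality of the schematic form $\tfrac{d}{dt}y+(\text{dissipation})\le g\,y+(\text{lower-order forcing})$ which I then integrate from $0$ to $t$, bounding the right-hand side by the $L^2_t$ estimate proved at the previous level and by the initial-data bounds of Assumption~\ref{newcn:ap:2}; no Gronwall blow-up in $1/\varepsilon$ appears, because $\int_0^\infty y\,{\rm d}t$ is already under control. Concretely: for (ii), pairing $\phi_{tt}=\Delta\mu_t$ with $-\Delta^{-1}\phi_t$ and using $\partial_n\phi_t=0$ gives $\tfrac12\tfrac{d}{dt}\|\phi_t\|_{-1}^2+\varepsilon\|\nabla\phi_t\|^2=-\tfrac1\varepsilon(f'(\phi)\phi_t,\phi_t)\le\tfrac{\tilde c_0}{\varepsilon}\|\phi_t\|^2$ by \eqref{eq:AP:fpl}; interpolating $\|\phi_t\|^2\le\|\phi_t\|_{-1}\|\nabla\phi_t\|$ and absorbing produces $\tfrac{d}{dt}\|\phi_t\|_{-1}^2+\varepsilon\|\nabla\phi_t\|^2\lesssim\varepsilon^{-3}\|\phi_t\|_{-1}^2$, whence (ii) with $\beta_2=\max\{\sigma_1+3,\sigma_2\}$ after integrating and inserting (i). For (iii) one tests the time-differentiated equation against $\phi_t$ to get $\tfrac12\tfrac{d}{dt}\|\phi_t\|^2+\varepsilon\|\Delta\phi_t\|^2=\tfrac1\varepsilon(\Delta\phi_t,f'(\phi)\phi_t)$, bounds $\|f'(\phi)\phi_t\|\lesssim\|\phi\|_{H^1}^{p-2}\|\phi_t\|_{H^1}+\|\phi_t\|$, and integrates using (ii); items (iv)--(vii) repeat this with one further time derivative, now also using \eqref{newcn:eq:Lip2} resp.\ \eqref{eq:AP:fpp} to handle the $f''(\phi)\phi_t^2$ contributions, and (viii)--(ix) are the $\Delta^{-1}$-shifted analogues. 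The extra $+1,+2,\dots$ in each $\beta_j$ is precisely the factor $\varepsilon^{-1}$ lost whenever an $\varepsilon$-weighted Young inequality trades an $H^k$ norm for an $H^{k-1}$ norm, or a dissipation term carrying a prefactor $\varepsilon$ is integrated in time.

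For the spatial-regularity items (x)--(xi) I would apply $-\Delta$ to the once- and twice-differentiated equations and invoke elliptic $H^2$-regularity for the Neumann Laplacian on $\Omega$, restricting to $q=1$ so that $H^{q+1}=H^2$ and no higher norms of $\phi$ enter; the right-hand sides are then bounded by the $L^2_t$ estimates for $\|\Delta\phi_t\|$, $\|\Delta\phi_{tt}\|$ already established in (iii)--(vii). For (xii)--(xiii) I would read the second equation of \eqref{eq:CH} as the elliptic equation $-\varepsilon\Delta\phi=\mu-\tfrac1\varepsilon f(\phi)$, so that $\|\phi\|_{H^2}\lesssim\varepsilon^{-1}\|\mu\|+\varepsilon^{-2}\|f(\phi)\|$, and $\Delta\mu=\phi_t$, so that $\|\mu\|_{H^2}\lesssim\|\phi_t\|+\|\mu\|$; estimating $\|\mu\|$ from $\mu=-\varepsilon\Delta\phi+\tfrac1\varepsilon f(\phi)$ and $\|f(\phi)\|_{L^2}$ by the growth bound together with the $H^1$ bound on $\phi$ closes the loop and yields $\beta_{12}=\max\{\sigma_1,(\sigma_1+1)(p-1)+3\}$.

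The step I expect to be the main obstacle is the bookkeeping of the nonlinear terms at each level: one must choose the H\"older/Gagliardo--Nirenberg decomposition so that (a) the top-order factor ($\|\Delta\phi_t\|$, $\|\nabla\phi_{tt}\|$, $\dots$) appears with a power small enough to be absorbed into the dissipation after an $\varepsilon$-weighted Young inequality, and (b) the remaining factors involve only norms whose $L^2_t$ bound has \emph{already} been secured at an earlier item, so that the induction genuinely closes in the prescribed order. Verifying that the resulting powers of $\varepsilon$ assemble exactly into the stated formulas for $\beta_2,\dots,\beta_{12}$ is routine but lengthy, and it is precisely here that the upper bound on $p$ is needed.
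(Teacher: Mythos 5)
Your proposal follows the paper's proof almost verbatim for items (i)--(xi) and (xiii): the same base estimate obtained by integrating the dissipation law, the same ordering of the bootstrap, the same test functions ($-\Delta^{-1}\phi_t$, $\phi_t$, $-\Delta^{-1}\phi_{tt}$, $\Delta^{-2}\phi_{tt}$, the $\Delta^{-1}$-shifted variants for (viii)--(ix)), the same use of the one-sided bound \eqref{eq:AP:fpl} together with $\|f'(\phi)\|_{L^3}$ and $\|f''(\phi)\|_{L^6}$ estimates derived from the uniform $H^1$ bound, and the same observation that no Gronwall factor is needed because the time integral appearing on the right-hand side is already controlled by the preceding item. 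Items (x)--(xi) are obtained in the paper exactly as you describe, from the $L^2_t$ bounds on $\|\phi_t\|$, $\|\nabla\phi_t\|$, $\|\Delta\phi_t\|$ (resp.\ the same norms of $\phi_{tt}$) via elliptic regularity for the Neumann Laplacian.

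The one genuine gap is item (xii). You propose the static elliptic reading $-\varepsilon\Delta\phi=\mu-\frac{1}{\varepsilon}f(\phi)$, hence $\|\phi\|_{H^2}\lesssim\varepsilon^{-1}\|\mu\|+\varepsilon^{-2}\|f(\phi)\|$. For the relevant quartic potential ($p=4$) the second term costs $\varepsilon^{-4}\|f(\phi)\|^2\lesssim\varepsilon^{-4}\|\phi\|_{H^1}^{6}\lesssim\varepsilon^{-3\sigma_1-7}$, and the $\|\mu\|$ contribution is no better once the mean value $\frac{1}{\varepsilon|\Omega|}\int_\Omega f(\phi)\,{\rm d}x$ is accounted for; neither route yields the stated exponent $\varepsilon^{-(\sigma_1+3)}$, which is independent of $p$. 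Moreover, as written your loop is circular: $\|\mu\|$ is estimated back through $\varepsilon\|\Delta\phi\|$ with an $O(1)$ constant that cannot be absorbed. The paper's proof of (xii) avoids $f(\phi)$ entirely: it tests \eqref{eq:CH0} with $\phi$ and integrates by parts once, so the nonlinearity appears as $-\frac{1}{\varepsilon}(f'(\phi)\nabla\phi,\nabla\phi)\le\frac{\tilde c_0}{\varepsilon}\|\nabla\phi\|^2\lesssim\varepsilon^{-(\sigma_1+2)}$ by the one-sided lower bound $f'\ge-\tilde c_0$ from \eqref{eq:AP:fp}, and integrating in time gives $\int_0^T\|\Delta\phi\|^2\,{\rm d}t\lesssim\varepsilon^{-(\sigma_1+3)}$ directly. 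You should replace your argument for (xii) by this parabolic energy estimate; your treatment of (xiii), which does use the evolution relation $\Delta\mu=\phi_t$ and touches $f$ only through $\|f'(\phi)\|_{L^3}$, does recover the stated $\beta_{12}$.
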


\begin{proof}
	We first write down some inequalities that will be frequently used. 
	The first one is the Holder's inequality
	\begin{equation}\label{eq:Holder3}
	\| u v w\|_{L^s} \le \|u\|_{L^p} \|v\|_{L^q} \| w\|_{L^r}, \quad\forall\ p,q,r\in(0,\infty],\ \frac{1}{s} = \frac{1}{p} + \frac{1}{q} + \frac{1}{r}.
	\end{equation}The second one is the Sobolev inequality
	\begin{equation}\label{eq:Sobolev}
	\| u \|_{L^q} \le C_s \| u \|_1, 
	\end{equation}
	where $q \in [2, \infty)$ for $d=2$;  $q\in [2, \frac{2d}{d-2}]$ 
	for $d>2$; $C_s$ is a general constant independent of $\phi$. 
	We can further use Poincare's inequality to get
	\begin{equation} \label{eq:Sobolev2}
	\| v \|_{L^q} \le C_s \| \nabla v \|,\quad \forall v\in L^2_0(\Omega).
	\end{equation}
	For $v \in L^2_0(\Omega)$,  we also have following inequality
	\begin{equation}\label{eq:L02}
	\| v\|^2 = (\nabla v,\nabla (-\Delta)^{-1} v) \le \frac{1}{2\delta} \|\nabla v\|^2 + \frac{\delta}{2} \| v \|_{-1}^2,
	\end{equation} where $\delta>0$ is an arbitrary constant.
	
	Now, we begin the proof.
	\begin{enumerate}
		\item[(i)]
		When $\gamma=1$, we have Cahn-Hilliard equation
		\begin{equation}\label{eq:CH0}
		\phi_{t}+ \varepsilon \Delta^2 \phi =\dfrac{1}{\varepsilon} 
		\Delta f(\phi).
		\end{equation}
		Multiplying \eqref{eq:CH0} by $-\Delta^{-1} \phi_t$ and using integration by parts, we get
		\begin{equation}\label{eq:CH1}
		\|\phi_t\|_{-1}^2 + \frac{\varepsilon}{2}\frac{d}{ d t} 
		\|\nabla \phi\|^2
		=-\frac{1}{\varepsilon}(f(\phi),\phi_t)
		=-\frac{1}{\varepsilon}\frac{d}{d t}\int_{\Omega} F(\phi) dx.
		\end{equation}
		After integrating over $[0, T]$,  we obtain
		\begin{equation}
		\int_{0}^{T}\|\phi_t\|_{-1}^{2}{\rm d}t + 
		E_{\varepsilon}(\phi(T)) = E_{\varepsilon}(\phi^0)
		\end{equation} 
		Taking maximum values of terms on the left hand side for $T \in [0, \infty]$, we get the first part of (i) from \eqref{eq:AP:E0}.
		From the definition of $E_\varepsilon(\phi)$, and assumption \eqref{eq:AP:Fcoercive} we know
		\begin{equation}
		\|\phi\|^2 \le B_0 |\Omega| + B_1 \varepsilon^{-\sigma_1+1} \lesssim \varepsilon^{-(\sigma_1-1)^+}.
		\end{equation}
		Combining above estimate with the fact $\frac\varepsilon2\| 
		\nabla \phi \| ^2 \lesssim \varepsilon^{-\sigma_1}$, we get 
		\begin{equation} \label{eq:CHregH1}
		\| \phi \|_{H^1}^2 \lesssim \varepsilon^{-(\sigma_1+1)}.
		\end{equation}
		\item[(ii)]
		We formally differentiate \eqref{eq:CH0} in time to obtain
		\begin{equation}\label{eq:CH00}
		\phi_{tt}+ \varepsilon \Delta^2 \phi_{t} = \dfrac{1}{\varepsilon}
		\Delta \left( f'(\phi)\phi_t \right). \\
		\end{equation}
		
		Pairing \eqref{eq:CH00} with $ -\Delta^{-1}\phi_t$ and using \eqref{eq:L02}, yields
		\begin{equation}\label{eq:CH13}
		\begin{split}
		\frac{1}{2}\frac{d}{d t}\|\phi_{t}\|_{-1}^2
		+ \varepsilon \| \nabla \phi_{t}\|^2
		= &-\dfrac{1}{\varepsilon}\left( f'(\phi)\phi_t, \phi_t \right)
		\leq \frac{\tilde{c}_0}{\varepsilon}\|\phi_t\|^2\\
		\leq & \frac{\varepsilon}{2} \|\nabla \phi_{t}\|^2 + \frac{\tilde{c}_{0}^2}{2 \varepsilon^3}\|\phi_{t}\|_{-1}^2.\\
		\end{split}
		\end{equation}
		Integrating \eqref{eq:CH13} over $[0,T]$ and taking maximum 
		values for terms depending on $T$, we get
		\begin{equation}\label{eq:CH14}
		\esssup \limits_{t\in[0,\infty]}  \|\phi_{t}\|_{-1}^2+ \varepsilon\int_{0}^{\infty} \|\nabla \phi_{t}\|^2{\rm d} t
		\lesssim \frac{\tilde{c}_{0}^2}{ \varepsilon^3} \int_{0}^{\infty } \|\phi_t\|_{-1}^2 {\rm d} t
		+\|\phi_{t}^0\|_{-1}^2. \\
		\end{equation}
		The assertion then follows from (i) and the inequality 
		\eqref{eq:AP:2} of Assumption \ref{newcn:ap:2}.
		
		\item[(iii)]
		Testing \eqref{eq:CH00} with $\phi_t$, using \eqref{eq:Holder3} 
		and \eqref{eq:Sobolev} with Poincare's inequality, we get
		\begin{equation}\label{eq:CH21}
		\begin{split}
		\frac{1}{2} \frac{d}{d t}\| \phi_t\|^2 + \varepsilon \|\Delta \phi_t \|^2
		= & \frac{1}{\varepsilon}(f'(\phi) \phi_t, \Delta \phi_t)
		\leq \frac{1}{\varepsilon} \|f'(\phi)\|_{L^3} \| \phi_t\|_{L^6} \| \Delta \phi_t\|\\
		\leq &
		\frac{\varepsilon}{2} \|\Delta \phi_t\|^2
		+\frac{1}{2 \varepsilon^3} \|f'(\phi)\|_{L^3}^2 \| \phi_t \|_{L^6}^2\\
		\leq &
		\frac{\varepsilon}{2} \|\Delta \phi_t\|^2
		+\frac{C_s^2}{2 \varepsilon^3} \|f'(\phi)\|_{L^3}^2 \| \nabla \phi_t \|^2,
		\end{split}
		\end{equation}
		which leads to 
		\begin{equation}\label{eq:CH22}
		\begin{split}
		&\esssup \limits_{t\in[0,\infty]}\|\phi_t\|^2 + \varepsilon \int_{0}^{\infty} \|\Delta \phi_t\|^2 {\rm d} t\\
		\lesssim&
		\frac{C_s}{\varepsilon^3}  \esssup_{t\in[0,\infty]} \|f'(\phi)\|_{L^3}^2  
		\int_{0}^{\infty} \|\nabla \phi_t\|^2 {\rm d} t
		+\|\phi_t^0\|^2.
		 \end{split}
		\end{equation}
		On the other hand side, by assumption \eqref{eq:AP:fp}, the Sobolev inequality \eqref{eq:Sobolev} and estimate \eqref{eq:CHregH1}, we have
		\begin{equation}\label{eq:fpL3}
		\|f'(\phi)\|_{L^3}^2 \lesssim \tilde{c}_2 \|\phi\|_{L^{3(p-2)}}^{2(p-2)} +\tilde{c}_3  
		\lesssim \tilde{c}_2 \| \phi \|_1^{2(p-2)} + \tilde{c}_3
		\lesssim \varepsilon^{-(\sigma_1+1)(p-2)}
		\end{equation}
		The assertion then follows from \eqref{eq:CH22}, \eqref{eq:fpL3},  (ii) and assumption \eqref{eq:AP:3}.
		
		\item[(iv)]
		Testing \eqref{eq:CH00} with $-\Delta^{-1}\phi_{tt}$, we get
		\begin{equation}\label{eq:CH23}
		\begin{split}
		&\|\phi_{tt}\|_{-1}^2+ \frac{\varepsilon}{2}\frac{d}{dt} \|\nabla \phi_{t}\|^2
		=- \dfrac{1}{\varepsilon}(f'(\phi)\phi_t, \phi_{tt})\\
		= & -\frac{1}{2\varepsilon}\frac{d}{dt}(f'(\phi)\phi_t,\phi_t) 
		+ \frac{1}{2\varepsilon} (f''(\phi)\phi_t^2,\phi_t) \\
		\leq & -\frac{1}{2\varepsilon}\frac{d}{dt}(f'(\phi)\phi_t,\phi_t) 
		+ \frac{1}{2\varepsilon} \|f''\|_{L^{6}} \| \phi_t^2\|_{L^3} \|\phi_t\| \\
		\le & -\frac{1}{2\varepsilon}\frac{d}{dt}(f'(\phi)\phi_t,\phi_t) 
		+ \frac{C_s^2}{2\varepsilon} \|f''\|_{L^{6}} \| \nabla \phi_t\|^2 \| \phi_t\| \\
		\end{split}
		\end{equation}
		Integrate  \eqref{eq:CH23} over $[0,T]$, we continue the estimate as
		\begin{equation}\label{eq:CH24}
		\begin{split}
		&2\int_{0}^{T} \|\phi_{tt}\|_{-1}^2 {\rm  d} t
		+ \varepsilon\|\nabla \phi_{t}(T)\|^2
		- \varepsilon\|\nabla \phi_{t}^0\|^2\\
		\leq & -\frac{1}{\varepsilon}(f'(\phi)\phi_t,\phi_t)|_{t=T} + \frac{1}{\varepsilon} (f'(\phi^0)\phi_t^0,\phi_t^0)\\
		&+ \frac{C_s^2}{\varepsilon} \esssup_{t\in[0,T]}\{\|f''\|_{L^{6}} \|\phi_t\|\}\int_0^T \|\nabla\phi_t\|^2 {\rm d}t\\
		\leq &  \frac{\varepsilon}{2} \|\nabla \phi_{t}(T)\|^2 + \frac{\tilde{c}_{0}^2}{2 \varepsilon^3}\|\phi_{t}(T)\|_{-1}^2
		+ \frac{1}{\varepsilon} (f'(\phi^0)\phi_t^0,\phi_t^0)\\
		&+ \frac{C_s^2}{\varepsilon} \esssup_{t\in[0,T]}\{\|f''\|_{L^{6}} \|\phi_t\|\}\int_0^T \|\nabla\phi_t\|^2{\rm d}t, 
		\end{split}
		\end{equation}
		i.e. 
		\begin{equation}\label{eq:CH24-c}
		\begin{split}
		2\int_{0}^{T} \|\phi_{tt}\|_{-1}^2 {\rm  d} t
		&+ \frac{\varepsilon}{2}\|\nabla \phi_{t}(T)\|^2
		\leq 
		\varepsilon\|\nabla \phi_{t}^0\|^2 
		+ \frac{1}{\varepsilon} (f'(\phi^0)\phi_t^0,\phi_t^0)\\
		& + \frac{\tilde{c}_{0}^2}{2 \varepsilon^3}\|\phi_{t}(T)\|_{-1}^2
		+ \frac{C_s^2}{\varepsilon} \esssup_{t\in[0,T]} \{\|f''\|_{L^{6}} \|\phi_t\| \}\int_0^T \|\nabla\phi_t\|^2{\rm d}t.
		\end{split}
		\end{equation}
		On the other hand, by \eqref{eq:AP:fpp}, the Sobolev inequality \eqref{eq:Sobolev} and estimate \eqref{eq:CHregH1}, we have
		\begin{equation}\label{eq:fppL6}
		\|f''(\phi)\|_{L^{6}}^2 \lesssim \tilde{c}_4\| \phi \|_{L^{6(p-3)^+}}^{2(p-3)^+} + \tilde{c}_5 
		\lesssim \|\phi\|_1^{2(p-3)^+} 
		\lesssim \varepsilon^{-(\sigma_1+1)(p-3)^+}
		\end{equation}
		By taking maximum for terms depending on $T$ in \eqref{eq:CH24-c} and using \eqref{eq:fppL6}, (ii), (iii) and the inequality \eqref{eq:AP:4} of Assumption \ref{newcn:ap:2}. we obtain the assertion (iv).

		\item[(v)]
		We formally differentiate \eqref{eq:CH00} in time to derive
		\begin{equation}\label{eq:CH000}
		\phi_{ttt}+ \varepsilon \Delta^2 \phi_{tt}
		=\dfrac{1}{\varepsilon}\Delta \left( f''(\phi)(\phi_t)^2+f'(\phi)\phi_{tt} \right). \\
		\end{equation}
		Testing \eqref{eq:CH000} with $\Delta^{-2} \phi_{tt}$, we obtain
		\begin{equation}\label{eq:CH31}
		\begin{split}
		& \frac{1}{2} \frac{d}{ dt } \| \Delta^{-1}\phi_{tt}\|^2 + \varepsilon \| \phi_{tt} \|^2
		=\dfrac{1}{\varepsilon} \left( f''(\phi)(\phi_t)^2+f'(\phi)\phi_{tt},  \Delta^{-1}\phi_{tt}\right)\\
		\leq{}&\dfrac{\varepsilon}{2} \|f''(\phi)\|^2_{L^2} \|\phi_t\|_{L^6}^{4} +
		\dfrac{1}{2\varepsilon^3} \|\Delta^{-1} \phi_{tt}\|^2_{L^6}
		+\frac{1}{2\varepsilon^3}\|f'(\phi)\|_{L^{3}}^2\|\Delta^{-1} \phi_{tt}\|_{L^6}^2
		+\frac{\varepsilon}{2} \|\phi_{tt}\|^2 \\
		\le{}& \dfrac{\varepsilon}{2} C_s^4\|f''(\phi)\|^2_{L^2} \|\nabla \phi_t\|^{4}
		+ \dfrac{C_s^2}{2\varepsilon^3}\| \phi_{tt}\|^2_{-1}
		+\frac{C_s^2}{2\varepsilon^3} \|f'(\phi)\|_{L^{3}}^2
		\| \phi_{tt}\|_{-1}^2
		+\frac{\varepsilon}{2} \|\phi_{tt}\|^2. \\
		\end{split}
		\end{equation}
		After taking integration from $[0,T]$ and taking maximum for terms
		depending on $T$, we have
		\begin{equation}\label{eq:CH32}
		\begin{split}
		& \esssup \limits_{t\in[0,\infty]}  \| \Delta^{-1}\phi_{tt}\|^2 + \varepsilon\int_{0}^{\infty} \| \phi_{tt} \|^2 {\rm d} t\\
		\lesssim{} & {\varepsilon}  \esssup \limits_{t\in[0,\infty]} \left(\|f''(\phi)\|^2_{L^2} \|\nabla \phi_t\|^{2}\right)
		\int_{0}^{\infty} \|\nabla \phi_t\|^{2}{\rm d} t \\
		&+ \frac{1}{\varepsilon^3}
		\left(\esssup_{t\in[0,\infty]} \|f'(\phi)\|_{L^{3}}^2+1\right)
		\int_{0}^{\infty} \| \phi_{tt}\|^2_{-1}{\rm d} t
		+ \| \Delta^{-1}\phi_{tt}^0\|^2. \\
		\end{split}
		\end{equation}
		The assertion then follows from \eqref{eq:fpL3}, the following estimate
		\begin{equation}\label{eq:fppL2}
		\|f''(\phi)\|_{L^{2}}^2 \lesssim \tilde{c}_4\| \phi \|_{L^{2(p-3)^+}}^{2(p-3)^+} + \tilde{c}_5 
		\lesssim \|\phi\|_1^{2(p-3)^+} 
		\lesssim \varepsilon^{-(\sigma_1+1)(p-3)^+},
		\end{equation}
		(ii), (iv) and the inequality \eqref{eq:AP:5} of Assumption \ref{newcn:ap:2}.
		
		\item[(vi)]
		Pairing \eqref{eq:CH000} with $-\Delta^{-1} \phi_{tt}$, we obtain
		\begin{equation}\label{eq:CH3:1}
		\begin{split}
		& \frac{\varepsilon}{2}\frac{d}{d t} \|\phi_{tt}\|_{-1}^2
		+\varepsilon \|\nabla \phi_{tt}\|^2\\
		= &-\dfrac{1}{\varepsilon}\left( f''(\phi)(\phi_t)^2+f'(\phi)\phi_{tt}, \phi_{tt} \right)\\
		\leq&  \dfrac{C_s^2}{2\varepsilon^3} \|f''(\phi)\|_{L^2}^2 \|\phi_{t}\|_{L^6}^4
		+\frac{\varepsilon}{2C_s^2}\|\phi_{tt}\|_{L^6}^2
		+\frac{\tilde{c}_0}{\varepsilon}\|\phi_{tt}\|^2\\
		\leq&  \dfrac{C_s^6}{2\varepsilon^3} \|f''(\phi)\|_{L^2}^2 \|\nabla \phi_{t}\|^4
		+\frac{\varepsilon}{2}  \|\nabla \phi_{tt}\|^2 
		+\frac{\tilde{c}_0}{\varepsilon}\|\phi_{tt}\|^2.\\
		\end{split}
		\end{equation}
		Integrating \eqref{eq:CH3:1} from  $[0,\infty)$, we have
		\begin{equation}\label{eq:CH3:2}
		\begin{split}
		& \esssup \limits_{t\in[0,\infty]}  \| \phi_{tt}\|_{-1}^2 
		+ \varepsilon\int_{0}^{\infty} \| \nabla \phi_{tt} \|^2 {\rm d} t\\
		\lesssim{} & \frac{C_s^6}{\varepsilon^3}  \esssup \limits_{t\in[0,\infty]} \left(\|f''(\phi)\|^2_{L^2} \|\nabla \phi_t\|^{2}\right)
		\int_{0}^{\infty} \|\nabla \phi_t\|^{2}{\rm d} t \\
		&+ \frac{2\tilde{c}_0}{\varepsilon}
		\int_{0}^{\infty} \| \phi_{tt}\|^2{\rm d} t
		+ \|\phi_{tt}^0\|_{-1}^2. \\
		\end{split}
		\end{equation}
		The assertion then follows from \eqref{eq:fppL2}, (ii), (iv), (vi) and the inequality \eqref{eq:AP:6} of Assumption \ref{newcn:ap:2}.	
		
		\item[(vii)]
		Pairing \eqref{eq:CH000} with $ \phi_{tt}$, we obtain
		\begin{equation}\label{eq:CH3:4}
		\begin{split}
		&\frac{1}{2}\frac{d}{d t}\|\phi_{tt}\|^2
		+ \varepsilon \|\Delta \phi_{tt}\|^2\\
		= &\dfrac{1}{\varepsilon}\left( f''(\phi)(\phi_t)^2+f'(\phi)\phi_{tt}, \Delta \phi_{tt} \right)\\
		\leq& \frac{1}{\varepsilon}\|f''(\phi)\|_{L^6}\|\phi_t^2\|_{L^3}\|\Delta \phi_{tt}\|
		+\frac{1}{\varepsilon}\|f'(\phi)\|_{L^3}\|\phi_{tt}\|_{L^6}\|\Delta \phi_{tt}\| \\
		\leq&  \dfrac{1}{\varepsilon^3} \left( \|f''(\phi)\|_{L^6}^2 \|\phi_{t}\|_{L^6}^4
		+ \|f'(\phi)\|_{L^3}^2 \|\phi_{tt}\|_{L^6}^2 \right)
		+ \frac{\varepsilon}{2}\|\Delta \phi_{tt}\|^2\\
		\leq&  \dfrac{1}{\varepsilon^3} \left(C_s^4 \|f''(\phi)\|_{L^6}^2 \|\nabla \phi_{t}\|^4
		+ C_s^2\|f'(\phi)\|_{L^3}^2 \|\nabla \phi_{tt}\|^2 \right)
		+ \frac{\varepsilon}{2}\|\Delta \phi_{tt}\|^2.\\
		\end{split}
		\end{equation}
		After taking integration from $[0,T]$ and taking maximum for terms
		depending on $T$, we have
		\begin{equation}\label{eq:CH32:0}
		\begin{split}
		& \esssup \limits_{t\in[0,\infty]}  \| \phi_{tt}\|^2 
		+ \varepsilon\int_{0}^{\infty} \| \Delta \phi_{tt} \|^2 {\rm d} t\\
		\lesssim{} & \frac{2C_s^4}{\varepsilon^3}  \esssup \limits_{t\in[0,\infty]} \left(\|f''(\phi)\|^2_{L^6} \|\nabla \phi_t\|^{2}\right)
		\int_{0}^{\infty} \|\nabla \phi_t\|^{2}{\rm d} t \\
		&+ \frac{2C_s^2}{\varepsilon^3}
		\esssup_{t\in[0,\infty]} \|f'(\phi)\|_{L^{3}}^2
		\int_{0}^{\infty} \| \nabla \phi_{tt}\|^2{\rm d} t
		+ \| \phi_{tt}^0\|^2. \\
		\end{split}
		\end{equation}
		The assertion then follows from \eqref{eq:fppL6}, \eqref{eq:fpL3}, (ii), (iv), (v) and the inequality \eqref{eq:AP:7} of Assumption \ref{newcn:ap:2}.

		\item[(viii)]
		Pairing \eqref{eq:CH00} with $ -\Delta^{-3}\phi_{tt}$, we obtain
		\begin{equation}\label{eq:CH2:1}
		\begin{split}
		&\|\Delta^{-1}\phi_{tt}\|_{-1}^2
		+ \frac{\varepsilon}{2}\frac{d}{d t}  \|\phi_t\|_{-1}^2\\
		=&-\dfrac{1}{\varepsilon} ( f'(\phi) \phi_t, \Delta^{-2}\phi_{tt})
		\leq  \frac{1}{\varepsilon}\|f'(\phi)\|_{L^3}\|\phi_t\|\|\Delta^{-2}\phi_{tt}\|_{L^6}\\
		\leq & \frac{1}{2C_s^2}\|\Delta^{-2}\phi_{tt}\|_{L^6}^2
		+\frac{C_s^2}{2 \varepsilon^2}\|f'(\phi)\|_{L^3}^2\|\phi_t\|^2\\
		\leq & \frac{1}{2}\|\Delta^{-1}\phi_{tt}\|_{-1}^2 + 
		\frac{C_s^2}{2 \varepsilon^2}\|f'(\phi)\|_{L^3}^2(\|\phi_t\|_{-1}^2  +\|\nabla\phi_t\|^2).
		\end{split}
		\end{equation}
		After taking integration from $[0,T]$ and taking maximum for terms
		depending on $T$, we have
		\begin{equation}\label{eq:CH2:2}
		\begin{split}
		&\int_{0}^{\infty}  \|\Delta^{-1}\phi_{tt}\|_{-1}^2{\rm d} t
		+ \varepsilon \esssup \limits_{t\in[0,\infty]}   \|\phi_t\|_{-1}^2\\
		\lesssim{} & 
		\frac{C_s^2}{ \varepsilon^2}  \esssup \limits_{t\in[0,\infty]}\|f'(\phi)\|_{L^3}^2 \int_{0}^{\infty} (\|\phi_t\|_{-1}^2  +\|\nabla\phi_t\|^2){\rm d} t
		+ \varepsilon \|\phi_t^0\|_{-1}^2.
		\end{split}
		\end{equation}
		The assertion then follows from \eqref{eq:fpL3}, (i), (ii) and the inequality \eqref{eq:AP:2} of Assumption \ref{newcn:ap:2}.

		\item[(ix)]
		Pairing \eqref{eq:CH00} with $ \Delta^{-2}\phi_{t}$, we obtain
		\begin{equation}\label{eq:CH2:3}
		\begin{split}
		&\frac{1}{2}\frac{d}{d t} \|\Delta^{-1}\phi_{t}\|^2
		+  \varepsilon \|\phi_t\|^2\\
		=&\dfrac{1}{\varepsilon} ( f'(\phi) \phi_t, \Delta^{-1}\phi_{t})
		\leq  \frac{1}{\varepsilon}\|f'(\phi)\|_{L^3}\|\phi_t\|\|\Delta^{-1}\phi_{t}\|_{L^6}\\
		\leq & \frac{\varepsilon}{2}\|\phi_{t}\|^2
		+\frac{C_s^2}{2 \varepsilon^3}\|f'(\phi)\|_{L^3}^2\|\phi_t\|_{-1}^2.
		\end{split}
		\end{equation}
		After taking integration from $[0,T]$ and taking maximum for terms
		depending on $T$, we have
		\begin{equation}\label{eq:CH2:4}
		\begin{split}
		&\esssup \limits_{t\in[0,\infty]} \|\Delta^{-1}\phi_{t}\|^2
		+  \varepsilon \int_{0}^{\infty} \|\phi_t\|^2 {\rm d} t\\
		\leq &
		\frac{C_s^2}{ \varepsilon^3} \esssup \limits_{t\in[0,\infty]}\|f'(\phi)\|_{L^3}^2
		\int_{0}^{\infty} \|\phi_t\|_{-1}^2{\rm d} t.
		\end{split}
		\end{equation}
		
		\item[(x)]
		We can easily get the proof from (ii) (iii) (ix).
		
		\item[(xi)]
		We can easily get the proof from (v) (vi) (vii).
		
		\item[(xii)]
		Multiplying \eqref{eq:CH0} by $ \phi$ and using integration by parts and $\frac\varepsilon2\| \nabla \phi \| ^2 \lesssim \varepsilon^{-\sigma_1}$, we get
		\begin{equation}\label{eq:CH01}
		\begin{split}
		&\frac{1}{2}\frac{d}{ d t}\|\phi\|^2 + \varepsilon\|\Delta \phi\|^2
		=\frac{1}{\varepsilon}(\Delta f(\phi),\phi)\\
		=&-\frac{1}{\varepsilon}(f'(\phi) \nabla\phi,\nabla \phi)
		\leq \frac{\tilde{c}_0}{\varepsilon}\|\nabla \phi\|^2
		\lesssim \varepsilon^{-(\sigma_1+2)}.
		\end{split}
		\end{equation}
		Then we easily get
		\begin{equation}\label{eq:CH02}
		\begin{split}
		\int_{0}^{T} \|\phi\|^2_{H^2}{\rm d} t
		\lesssim \int_{0}^{T}\|\phi\|^2 + \|\nabla \phi\|^2 +\|\Delta \phi\|^2{\rm d} t
		\lesssim \varepsilon^{-(\sigma_1+3)}.
		\end{split}
		\end{equation}			
		
		\item[(xiii)]
		Multiplying \eqref{eq:CH0} by $ \Delta^{-1} \phi_t$ and using integration by parts, we get
		\begin{equation}\label{eq:CH05}
		\begin{split}
		&\|\Delta^{-1}\phi_t\|^2 + \frac{\varepsilon}{2}\frac{d}{ d t} \|\phi\|^2
		=\frac{1}{\varepsilon}(f(\phi),\Delta^{-1}\phi_t)\\
		=&-\frac{1}{\varepsilon}(f'(\phi) \nabla\phi,\Delta^{-\frac{3}{2}} \phi_t)
		\leq  \frac{1}{\varepsilon}\|f'(\phi)\|_{L^3}\|\nabla \phi\|\|\Delta^{-\frac{3}{2}}\phi_t\|_{L^6}\\
		\leq & \frac{1}{2C_s^2}\|\Delta^{-\frac{3}{2}}\phi_t\|_{L^6}^2
		+\frac{C_s^2}{2 \varepsilon^2}\|f'(\phi)\|_{L^3}^2\|\nabla\phi\|^2\\
		\leq & \frac{1}{2}\|\Delta^{-1}\phi_t\|^2
		+\frac{C_s^2}{2 \varepsilon^2}\|f'(\phi)\|_{L^3}^2\|\nabla\phi\|^2.
		\end{split}
		\end{equation}
		After taking integration from $[0,T]$, we have
		\begin{equation}\label{eq:CH06}
		\begin{split}
		&\int_{0}^{T} \|\Delta^{-1}\phi_t\|^2 {\rm d} t
		+ \varepsilon \esssup\limits_{t\in[0,T]}\|\phi\|^2\\
		\leq &
		\frac{C_s^2}{ \varepsilon^2} \esssup \limits_{t\in[0,T]}\|f'(\phi)\|_{L^3}^2
		\int_{0}^{T} \|\nabla \phi\|^2{\rm d} t
		\lesssim  \varepsilon^{-(\sigma_{1}+1)(p-1)-2}.
		\end{split}
		\end{equation}
		On the other hand
		\begin{equation}\label{eq:CH07}
		\phi_{t}=\Delta \mu, 
		\end{equation}
		combining above estimate with (i) (ix), then we have
		\begin{equation}\label{eq:CH08}
		\int_{0}^{T}\|\mu\|^2_{H^2}{\rm d} t \lesssim
		\int_{0}^{T}\|\Delta^{-1} \phi_t\|^2+ \|\phi_t\|_{-1}^2 + \|\phi_t\|^2{\rm d} t
		\lesssim  \varepsilon^{-\beta_{12}}.
		\end{equation}
	\end{enumerate} 
	
\end{proof}
%% The Appendices part is started with the command \appendix;
%% appendix sections are then done as normal sections
%% \appendix

%% \section{}
%% \label{}

%% If you have bibdatabase file and want bibtex to generate the
%% bibitems, please use
%%
%%  \bibliographystyle{elsarticle-harv} 
%%  \bibliography{<your bibdatabase>}

%% else use the following coding to input the bibitems directly in the
%% TeX file.

%\begin{thebibliography}{00}

%% \bibitem[Author(year)]{label}
%% Text of bibliographic item
\section*{References}
\bibliographystyle{plainnat}  % plain, unsrt, abbrv, alpha, siam, acm, apalike
%\bibliography{LSS2}

\end{document}